\definecolor{labelkey}{rgb}{0.6,0,1}
\newcounter{corr}
\definecolor{violet}{rgb}{0.580,0.,0.827}
\definecolor{darkgreen}{rgb}{0.180,0.4,0.827}
\newcommand{\corr}[3]{\typeout{Warning : a correction remains in page
\thepage}
				\stepcounter{corr}        
				{\color{blue}\ifmmode\text{\,\sout{\ensuremath{#1}}\,}\else\sout{#1}\fi}
        {\color{darkgreen}#2}
        {\color{violet} 
					#3
					} 
	}
\newcounter{cst}
\def\ctel#1{C_{\refstepcounter{cst}\@bsphack
\protected@write\@auxout{}%
           {\string\newlabel{#1}{{\thecst}{\thepage}}}\thecst}}
\newcounter{cexp}
\def\terml#1{T_{\refstepcounter{cexp}\@bsphack
\protected@write\@auxout{}%
           {\string\newlabel{#1}{{\thecexp}{\thepage}}}\thecexp}}
\newcommand{\mathbi}[1]{{\boldsymbol #1}}
\newcommand{\eop}{{\unskip\nobreak\hfil\penalty50
           \hskip2em\hbox{}\nobreak\hfil\mbox{\rule{1ex}{1ex} \qquad}
   \parfillskip=0pt
   \finalhyphendemerits=0\par\medskip}}
\renewenvironment{proof}[1][]{\noindent {\bf Proof#1. } }{\eop}
\newtheorem{theorem}{Theorem}[section]
\newtheorem{remark}[theorem]{Remark}
\newtheorem{lemma}[theorem]{Lemma} 
\newtheorem{definition}[theorem]{Definition}
\newtheorem{algorithm}[theorem]{Algorithm}
\definecolor{shadecolor}{gray}{0.92}
\definecolor{TFFrameColor}{gray}{0.92}
\definecolor{TFTitleColor}{rgb}{0,0,0}
\newcommand{\ba}{\begin{array}{llll}   }
\newcommand{\bac}{\begin{array}{c}}
\newcommand{\bari}{\begin{array}{r}}
\newcommand{\ea}{\end{array}}
\newcommand{\ban}{\begin{array}{llll}}
\newcommand{\ean}{\end{array}}
\newcommand{\be}{\begin{equation}\label}
\newcommand{\ee}{\end{equation}}
\newcommand{\beqsys }{\beqtab \left \{ \begin{array}{l}}
\newcommand{\eeqsys }{\end{array} \right . \eeqtab }
\newcommand{\benum}{\begin{enumerate}}
\newcommand{\eenum}{\end{enumerate}}
\newcommand{\beqtab}{\begin{eqnarray}} 
\newcommand{\eeqtab}{\end{eqnarray}}
\newcommand{\mA}{{\mathcal A}}
\newcommand{\mD}{{\mathcal D}}
\newcommand{\mF}{{\mathcal F}}
\newcommand{\mK}{{\mathcal K}}
\newcommand{\mL}{{\mathcal L}}
\newcommand{\mI}{{\mathcal I}}
\newcommand{\mQ}{{\mathcal Q}}
\newcommand{\mId}{\mathbbm{1}}
\newcommand{\bv}{\mathbi{v}}
\newcommand{\bfy}{\mathbi{y}}
\newcommand{\bfz}{\mathbi{z}}
\newcommand{\bxi}{\mathbi{\xi}}
\newcommand{\disc}{{\mathcal D}}
\renewcommand{\div}{{\mathop{\rm div}}}
\newcommand{\F}{\mathbb F}
\newcommand{\mP}{\mathbb P}
\newcommand{\E}{\mathbb E}
\newcommand{\N}{\mathbb N}
\newcommand{\R}{\mathbb R}
\newcommand{\x}{\mathbi{x}}
\newcommand{\vphi}{\mathbi{\phi}}
\def\dtD{\delta\! t_{\!\mD}}
\def\dtDm{\delta\! t_{\!\mD_m}}
\DeclareDocumentCommand{\RPiD}{ O{\disc} O{,0} }{\Pi_{#1}(X_{#1#2})}
\def\Fdof#1{{\bm{\mathcal{F}}(#1,\R)}}
\def\Fdof{\@ifnextchar[{\@with}{\@without}}
\def\@with[#1]#2{{\bm{\mathcal{F}}(#2;#1)}}
\def\@without#1{{\bm{\mathcal{F}}(#1,\R)}}
\def\RT0{\mathbb{RT}_0}
\def\ps{\widehat{p}}
\begin{document}
	\title[Numerical methods for stochastic PDEs with  Leray--Lions operator]{Design and convergence analysis of numerical methods for stochastic evolution equations with  Leray--Lions operator}
	
\author{J\'er\^ome Droniou}
\address{School of Mathematics, Monash University, Clayton, Victoria 3800, Australia.
\texttt{jerome.droniou@monash.edu}}
\author{Beniamin Goldys}
\address{School of Mathematics and Statistics, and The University of Sydney Nano Institute, 
         The University of Sydney,
         Sydney 2006, Australia
\texttt{beniamin.goldys@sydney.edu.au}}	
\author{Kim-Ngan Le}
\address{School of Mathematics, Monash University, Clayton, Victoria 3800, Australia.
\texttt{ngan.le@monash.edu}}

	\date{\today}
	
	%
	%
	
	\maketitle
\begin{abstract}
The gradient discretisation method (GDM) is a generic framework, covering many classical methods (Finite Elements, Finite Volumes, Discontinuous Galerkin, etc.), for designing and analysing numerical schemes for diffusion models. In this paper, we study the GDM for a general stochastic evolution problem based on a Leray--Lions type operator. The problem contains the stochastic $p$-Laplace equation as a particular case. The convergence of the Gradient Scheme (GS) solutions is proved by using Discrete Functional Analysis techniques, Skorohod theorem and the Kolmogorov test. In particular, we provide an independent proof of the existence of weak martingale solutions for the problem. In this way, we lay foundations and provide techniques for proving convergence of the GS approximating stochastic partial differential equations.
\end{abstract}

\keywords{\small\textsc{Keywords}: $p$-Laplace equation, stochastic PDE, numerical methods, gradient discretisation method, convergence analysis}

\section{Introduction}
The parabolic $p$-Laplacian problem occurs in many mathematical models of physical processes, such as nonlinear diffusion~\cite{AtkinsonJones1974} and non-Newtonian flows~\cite{Philip1961}. However, in practical situations with large scales, rapid velocity and pressure fluctuations, the motion of flow becomes unsteady and it is described as being turbulent~\cite{Wilcox1998}. Turbulence is a combination of a slow oscillating (deterministic) component and a fast oscillating component that can be modelled as a white noise perturbation of regular fluid velocity field. 
Therefore, in order to investigate turbulence in the parabolic $p$-Laplacian problem, the first step is to develop the theory and numerical algorithms for the stochastic parabolic $p$-Laplacian problem. 
Motivated by this problem, we study in this paper a more general stochastic partial differential equation based on a Leray--Lions type operator with homogeneous Dirichlet boundary condition. The model reads
\begin{equation}\label{eq:pLaplace}
\begin{aligned}
du - \div(a(u, \nabla u)) dt &= f(u)dW_t \quad \text{in }(0,T)\times\Theta,\\
u(0,\cdot) &= u_0 \quad \text{in } \Theta,\\
u &= 0 \quad \text{on }(0,T)\times \partial \Theta,
\end{aligned}
\end{equation}
where $T>0$, $\Theta$ is an open bounded domain in $\R^d$, $d= 1,2,3$, and the initial data $u_0\in L^2(\Theta)$. Here, 
$f$ is a continuous operator with linear growth acting between appropriate Banach spaces, see Section \ref{sec: GS main} for details. We assume that $W = \{W(t), t\in [0,T]\}$ is a $\mK$-valued Wiener process with a trace class covariance operator $\mQ$, for a certain Hilbert space $\mK$. Particular choices of $a$ include the $p$-Laplace operator corresponding to $a(u,\bv) = |\bv|^{p-1}\bv$ for some $p \in (1,+\infty)$ (see also \cite{ler-65-res} for more general versions), nonlinear and nonlocal diffusion operators of the form $a(u,\bv) = \Lambda[u]|\bv|^{p-1}\bv$ with $\Lambda[u]:L^p(\Theta)\to L^\infty(\Theta;\mathcal S_d(\R))$ uniformly elliptic (here, $\mathcal S_d(\R)$ denotes the space of symmetric $d\times d$ matrices) -- such a model appears in image smoothing with $p=2$ and $\Lambda[u]=g(|\nabla G*u|)$, where $G$ is a Gaussian kernel, $*$ is the spatial convolution, and $g:\R^+\to \R^+$ is smooth and decreasing \cite{CLMC92}.
If $f=0$ then the noise term vanishes, hence our stochastic model~\eqref{eq:pLaplace} includes deterministic equation as a special case. 

Some existence and uniqueness results for some particular forms of \eqref{eq:pLaplace} can be found
in the literature. In \cite{DebusscheHofmanova2016,Zhang,Hornung}, a quasi-linear version is considered
in which $a(u,\nabla u)=A(u)\nabla u$, and an additional advective term $\div(B(u))$ is added to the model;
existence and uniqueness of suitable solutions are proved. \cite{Breit} considers a non-degenerate version
of the $p$-Laplace model, in which $a(u,\nabla u)=(1+|\nabla u|)^{p-2}\nabla u$, and proves
existence and regularity results. The analysis carried out in \cite{Prevot} only covers the straight $p$-Laplacian, and is restricted to $p\ge 2$.
Our assumption on the Leray--Lions operator are more general than in these references, in the sense that
we accept models that are fully non-linear with respect to $\nabla u$, and that may be non-monotone ($a$ depending
on both $u$ and $\nabla u$, in a non-linear way with respect to $\nabla u$).
Moreover, and contrary to these references, we propose an approach that has the double advantage
of establishing the existence of a solution to \eqref{eq:pLaplace}, and of proving the convergence of
a variety of numerical approximations of this model.

Numerical methods of the deterministic version of model~\eqref{eq:pLaplace} (i.e. $f=0$) and their proofs of convergence are studied in~\cite{Ju2000,Carstensenet2006,Johnet1994,Droniouet2013} and the references cited therein. However, there is no numerical approximation of the stochastic model~\eqref{eq:pLaplace} due to difficulties arising in the nonlinear term and the infinite dimensional nature of the driving noise processes.

There is an increasing number of numerical methods for the solution of stochastic evolution equations mentioned in the 
literature~\cite{Kloeden-book,Kruse-book,Zhang-book},
where unique mild solutions are required and the approximate schemes are treated  in terms of the semigroup approach. However, these assumptions 
are not applicable for a class of stochastic equations involving strongly nonlinear terms, such as Navier--Stokes, magnetohydrodynamics (MHD), 
Schr\"odinger, 
Landau--Lifshitz--Gilbert, Landau--Lifshitz--Bloch and nonlinear porous media equations. 
The stochastic Navier-Stokes equation~\cite{Prohl2012,BrzeCareProhl2013} and the stochastic Landau--Lifshitz--Gilbert 
equation~\cite{JoeBenNgan2020,GoldysLeTran2016,BanBrzPro13,BanBrzPro09} are investigated by 
using the conforming finite element method to approximate their solutions. Furthermore, the convergence of the approximate solutions
is also proved which implies the existence of weak martingale solutions.
In the recent work~\cite{ondrejat2020numerical}, a general convergence theory for conforming finite element schemes of stochastic parabolic PDEs is developed by adapting ideas from~\cite{Prohl2012,BrzeCareProhl2013,BanBrzPro13,BanBrzPro09}.

All these previous works, however, only deal with conforming approximations, which use for the spatial discretisation a subspace of the Sobolev space appearing in the weak formulation of the continuous problem. This usually imposes restrictions on the types of mesh that can be considered -- typically, triangular/tetrahedral or quadrangular/hexahedral meshes. Moreover, conforming methods are known to be ill-suited in some applications, e.g.~when mesh locking appears, when inf-sup stability is sought, or when some physical properties of the model must be respected (such as balance and conservativity of approximate fluxes). In such circumstances, non-conforming methods might be better suited; such methods include non-conforming finite elements and finite volume methods, and also recent high-order methods for polytopal meshes with cell and face unknowns -- such as Hybrid-High Order schemes and Virtual Element Methods. We refer the reader to \cite{Droniou13,hho-book,Ayuso-de-Dios.Lipnikov.ea:16,Beirao-da-Veiga.Brezzi.ea:13,Cockburn.Dong.ea:09} and reference therein for detailed presentations of these methods.

In this work, we  approximate~\eqref{eq:pLaplace} by using the Gradient Discretisation Method (GDM)~\cite{Droniou.et.al2018} and an implicit Euler time stepping with uniform time steps. The GDM is a generic convergence analysis framework for a wide variety of methods (conforming or nonconforming) written in  discrete variational formulation, and based on independent approximations of functions and gradients using the same degrees of freedom. Several well-known methods fall in the GDM framework, in particular:
\begin{itemize}
\item Galerkin methods, including the (standard or mass-lumped) conforming Finite Element methods \cite{DEH15},
\item Nonconforming Finite Element methods, including the (standard or mass-lumped) nonconforming $\mathbb{P}_1$ scheme \cite{DEH15} and non-conforming Finite Element methods on polytopal meshes \cite{DEGH20},
\item Symmetric Interior Penalty Galerkin (SIPG) methods \cite{EG18},
\item Mixed Finite Element methods \cite{Droniou.et.al2018},
\item Hybrid Mimetic Mixed methods and Mimetic Finite Difference methods \cite{Droniouet2013},
\item Hybrid High-Order and Virtual Elements Methods \cite{DDM17}.
\end{itemize}
By writing numerical schemes for \eqref{eq:pLaplace} and performing their analysis in the GDM framework, we provide a unified convergence result for all these methods. We refer to \cite{Eymard.Guichard.ea:12,DDM17} and to the complete monograph \cite{Droniou.et.al2018} for more details of the GDM and the methods it covers.
Because the GDM encompasses non-conforming schemes, the functional spaces for the approximate solutions are not included in the classical (continuous) Sobolev spaces. Therefore, the usual Poincar\'e inequalities, Sobolev embeddings, Rellich or Aubin--Simon compactness theorems, or trace inequalities cannot be used. In the context of deterministic PDEs, a series of ``Discrete Functional Analysis'' results have been established to mimic these continuous functional analysis tools \cite{Droniou.et.al2018}. 

Our convergence analysis approach is based on the adaptation of these Discrete Functional Analysis techniques to the stochastic case, the Skorohod theorem and the Kolmogorov test; we show the convergence of the Gradient Scheme (GS) solutions to a weak martingale solution of~\eqref{eq:pLaplace}. In this way, an independent proof of the existence of weak martingale solutions for the problem is provided.

The paper is organised as follows.
 In Section~\ref{sec: GS main} 
we recall the notations of the gradient discretisation method and propose the GS for approximating the stochastic model~\eqref{eq:pLaplace}.
Weak martingale solutions to~\eqref{eq:pLaplace} are defined 
and our main result is stated in this section. Section~\ref{sec:priori} provides priori estimates of approximated solutions and the noise term added at each step of the scheme in various norms. In Section~\ref{sec:tight}, we first show the tightness of the sequence including the GS solutions and then prove the almost sure convergence in a certain norm, up to a change of probability space. The continuity of the limit and the martingale part are also proved in this section. 
Section~\ref{sec:limit} is devoted to the proof of
the main theorem.
Finally, in the Appendix we prove necessary results that are used in the course of the
proof.

\section{Gradient scheme and main results}\label{sec: GS main}
Before introducing  the GS for approximation of ~\eqref{eq:pLaplace}, we introduce notations and assumptions used in the rest of the paper.

\textbf{Notations}: We let $p'=\frac{p}{p-1}$ be the conjugate exponent of $p>1$. 
To alleviate the formulas, when written without specifying the space, the Lebesgue spaces we consider are those on $\Theta$; so, most of the time, we write $L^q$ instead of $L^q(\Theta)$. Correspondingly,
$\|{\cdot}\|_{L^q}$ is the norm in $L^q(\Theta)$, $\langle \cdot,\cdot\rangle_{L^{p'},L^p}$ is the duality product between $L^{p'}(\Theta)$ and $L^p(\Theta)$ (that is, $\langle f,g\rangle_{L^{p'},L^p}=\int_\Theta fg$), and $\langle \cdot,\cdot\rangle_{L^2}$ the inner product in $L^2(\Theta)$; we use the same notations in vector-valued Lebesgues spaces $L^q(\Theta)^e$ for $e\ge 2$.
We will use the notation $\Theta_T:=(0,T)\times\Theta$.
In proofs of theorems and lemmas, $C$ will stand for a generic constant that  depends only on the data above, and on any constant appearing in the statement of the corresponding theorem or lemma.

\subsection{Assumptions}\label{ass} The following standing assumptions will not be enunciated again.

\begin{itemize}[leftmargin=20pt]
\item \textbf{Initial condition.} $u_0$ belongs to $L^2$.
\item \textbf{Leray--Lions operator.} The function $a : \R\times\R^d \rightarrow \R^d$ is continuous and there exists $p\in (1,+\infty)$ and constants $c_1, c_2$ such that, for all  $(x,\bfy)\in \R\times\R^d$ and all $\bfz\in\R^d$,
\begin{align}
a(x,\bfy)\cdot \bfy &\geq c_1 |\bfy|^p \label{eq: a1}\\
|a(x,\bfy)|&\leq c_2 (1+ |\bfy|^{p-1})\label{eq: a2}\\
 (a(x,\bfy)-a(x,\bfz))\cdot(\bfy-\bfz)&\geq 0.\label{eq: a3}
\end{align}

\item \textbf{Noise term.} Let $(\Omega,\mF,\F=(\mF_t)_{t\in[0,T]},\mP)$ be a stochastic basis, that is,  $(\Omega,\mF,\mP)$ is a probability space and $\F$ is a filtration satisfying the usual conditions. We assume that one  can define on this basis an $\left(\mF_t\right)$-adapted Wiener process $W$ taking values in a separable Hilbert space $\mK$ with  covariance operator $\mQ$ such that $\mathrm{Tr}(\mQ)<\infty$. Then, the process $W$ can be written in the form
\[W(t)=\sum_{k=1}^\infty q_kW_k(t)e_k\,,\]
where $\left\{e_k,\,k\ge 1\right\}$ is an orthonormal basis of $\mK$ made of eigenvectors of $\mQ$ with the corresponding eigenvalues $q_k$ such that 
\[\sum_{k=1}^\infty q_k^2<\infty\,,\]
and $\left\{W_k\,,k\ge 1\right\}$ is a family of independent $(\mF_t)$-adapted real-valued Wiener processes. 
\par\medskip\noindent
Let $\mL(\mK,L^2)$ be the Banach 
space of bounded linear operators with operator norm denoted by $\|{\cdot}\|_{\mL(\mK,L^2)}$.
We assume that the operator $f: L^p\rightarrow \mL(\mK,L^2)$ is 
continuous and that there exist $F_1, F_2>0$ such that, for any $v\in L^p\cap L^2$ 
\begin{equation}\label{eq: f}
\|f(v)\|_{\mL(\mK,L^2)}^2\leq F_1\|v\|_{L^2}^2 + F_2.
\end{equation}
\end{itemize}
\begin{remark}[Example of $f$]
An important example of the operator $f$ arises when $\mK=L^2$ and $f(v):L^2\to L^2$ is a Nemytski type operator determined by a bounded continuous function $f_0$ such that $[f(v)k](x)=f_0(v(x))k(x)$. 
\end{remark}

\begin{remark}[Case $p=1$]
The well-known total-variation-flow (TV-flow) problem corresponds to the Leray--Lions operator $a(x,\bfy)=\frac{\bfy}{|\bfy|}$, which would require us to take $p=1$ in the assumptions above. This case is singular in the analysis of Leray--Lions equations (even in the deterministic and stationary setting), and necessitates specific development that goes beyond the aims of this paper. We refer the reader to \cite{FengProhl:03} for a finite-element analysis of TV-flow, and to \cite{Eymard-TVflow} for an example of a numerical analysis in the context of Bingham fluids.
\end{remark}

Some comments on the choice of the noise term are in place. It is a well established practice in physics and mathematics to study the behaviour of a PDE in question under Brownian perturbations, see for example \cite{DaPrato2014} and references therein. Parti\-cular physical problems may require other, non-Brownian noises and mathematical analysis of some of them is available, see \cite{pz}. Impulsive noise with isolated jumps does not pose new difficulties, when compared to deterministic equations. Noise with infinite intensity of jumps, such as L\'evy stable process, is much more challenging and requires a separate analysis. The same applies to noise with memory such as Fractional Brownian Motion.  
\par
In this paper we consider a physically relevant case of the so-called multiplicative noise $f(u)dW$. A crucial example is provided by the famous parabolic Anderson model described by the heat equation perturbed by random potential of the form $udW$, see \cite{hairer}. The noise of this form assures positivity of solutions with probability one. Our work is a step towards a theory of $p$-heat equation perturbed by random potential.  Other physical motivations for introducing the multiplicative noise include equations of stochastic fluid dynamics \cite{roz}, quantum field theory \cite{bgo}, the Zakai equation of optimal filtering \cite{DaPrato2014}, and nonlinear stochastic Fokker-Planck equation arising in mean field games \cite{carmona}. 
\subsection{Gradient scheme}\label{sec:gs}
We recall here the notions of the gradient discretisation method. The idea of this general analysis framework is to replace, in the weak formulation of the problem, the infinite-dimensional space and continuous operators, respectively, by a finite-dimensional space and reconstruction operators on this space; this set of ``discrete'' space and operators is called a gradient discretisation (GD), and the scheme obtained after substituting these elements into the weak formulation is called a gradient scheme (GS). The convergence of the obtained GS can be established based on only a few general concepts on the underlying GD. Moreover, different GDs correspond to different classical schemes (finite elements, finite volumes, etc.). Hence, the analysis carried out in the GDM directly applies to all these schemes, and does not rely on the specificity of each particular method; we refer the reader to the monograph \cite{Droniou.et.al2018} for a more detailed introduction to the GDM (see in particular Chapter 1 therein for the general principles, and Part III for some numerical methods covered by the framework).

\begin{definition}\label{def: gdm}
$\mD=\bigl(X_{\mD,0},\Pi_\mD,\nabla_\mD,\mI_\mD,\bigl(t^{(n)}\bigr)_{n=0,\cdots,N}\bigr)$ is a space-time gradient discretisation for homogeneous Dirichlet boundary conditions, 
if its elements satisfy the following properties
\begin{enumerate}[label=(\roman*)]
\item $X_{\mD,0}$ is a finite dimensional vector space of functions of discrete argument 
and $X_{\mD,0}$ encodes homogeneous Dirichlet boundary conditions. 
\item\label{def:PiD} the function reconstruction $\Pi_\mD:X_{\mD,0}\rightarrow L^\infty$ is a linear mapping that reconstructs, from an element of $X_{\mD,0}$, a function over $\Theta$,
\item the linear mapping $\nabla_\mD: X_{\mD,0}\rightarrow (L^p)^d$ gives a reconstructed discrete gradient. It must be chosen in such a way that the mapping $X_{\mD,0}\ni v\mapsto \|\nabla_\mD v\|_{L^p}\in [0,\infty)$ is a norm on $X_{\mD,0}$,
\item $\mI_\mD : L^2\rightarrow X_{\mD,0}$ is an interpolation operator. It is used to create, from the initial condition, a discrete vector in the space of unknowns.
\item  $t^{(0)}=0<t^{(1)}<\cdots<t^{(N)} = T$ is a uniform time discretisation in the sense that $\dtD:=t^{(n+1)}-t^{(n)}$ is a constant time step.
\end{enumerate}
\end{definition}

For any $\bigl(v^{(n)}\bigr)_{n=0,\cdots,N}\in X_{\mD,0}^{N+1}$, we define  piecewise-constant-in-time functions $\Pi_\mD v:[0,T]\to L^\infty$, $\nabla_\mD v:(0,T]\to (L^p)^d$ and $d_\mD v:(0,T]\to L^\infty$ by: For $n=0,\cdots,N-1$, for any  $t\in(t^{(n)},t^{(n+1)}]$, for almost every (with respect to the Lebesgue measure) $\x\in\Theta$
\begin{align*}
\Pi_\mD v(0,\x):=\Pi_\mD v^{(0)}(\x),\qquad
&\Pi_\mD v(t,\x):=\Pi_\mD v^{(n+1)}(\x),\\
\nabla_\mD v(t,\x):= \nabla_\mD v^{(n+1)}(\x),\qquad
&d_\mD v(t)= d_\mD^{(n+\frac12)} v:=\Pi_\mD v^{(n+1)} -\Pi_\mD v^{(n)}.
\end{align*}

We now describe the scheme.

\begin{algorithm}[Gradient scheme for~\eqref{eq:pLaplace}]\label{alg: GS} 
Consider the stochastic basis $(\Omega,\mF,\F=(\mF_t)_{t\in[0,T]},\mP)$ and $\left(\mF_t\right)$-adapted Wiener process $W$ defined in Assumption~\ref{ass}.
Set $u^{(0)}:=\mI_\mD u_0$ and take random variables $u(\cdot)=\bigl(u^{(n)}(\omega,\cdot)\bigr)_{n=0,\cdots,N} \in X_{\mD,0}^{N+1}$ 
such that:
\begin{itemize}
\item $u$ is adapted to the filtration $(\mF_N^n)_{0\leq n\leq N}$ defined by
\[
 \mF_N^n:= \sigma\{W(t^{(k)}),0\leq k\leq n\}.
\]
\item for any function $\phi\in X_{\mD,0}$ and almost every $\omega\in \Omega$,
\begin{align}\label{eq: gdmscheme}
\big\langle d_\mD^{(n+\frac12)} u,\Pi_\mD \phi\big\rangle_{L^2}
+
\dtD \langle a(\Pi_\mD u^{(n+1)},&\nabla_\mD u^{(n+1)}),  \nabla_\mD \phi\rangle_{L^{p'},L^p}
\nonumber\\
&=
\big\langle f(\Pi_\mD u^{(n)})\Delta^{(n+1)}W,  \Pi_\mD \phi\big\rangle_{L^2}.
\end{align}
Here $\Delta^{(n+1)}W := W(t^{(n+1)})-W(t^{(n)})$.
\end{itemize} 
\end{algorithm}

\begin{remark}[Computing a solution to the gradient scheme]
At each time step and for each realisation of $W$, \eqref{eq: gdmscheme} requires us to solve a non-linear system
to compute $u^{(n+1)}$. Specifically, this system is a (deterministic) stationary Leray--Lions problem.
Solution strategies for such non-linear systems are well-known and involve either fixed-point algorithms,
or Newton algorithms (which have to be smoothed in the case $p<2$ to avoid the singularity where $\nabla u=0$).
For the pure $p$-Laplace problem, more efficient strategies can also be invoked that are based on conjugate gradients for the
corresponding minimisation problem, see \cite{BL:93} and reference therein.
\end{remark}

In order to establish the stability and convergence of GS~\eqref{eq: gdmscheme}, sequences of space-time gradient discretisations $(\mD_m)_{m\in\N}$  are required to satisfy \textit{consistency, limit-conformity} and \textit{compactness} properties~\cite{Droniou.et.al2018}. The consistency is slightly adapted here to account for the non-linearity we consider. 
In the following, we let $\widehat{p}=\max\{2,p'\}$.

\begin{definition}[Consistency]\label{def:consistency}
A sequence $(\mD_m)_{m\in\N}$ of space-time gradient discretisations in the sense of Definition~\ref{def: gdm} is said to be consistent if
\begin{itemize}
\item for all $\phi\in L^{\widehat{p}}(\Theta)\cap W^{1,p}_0(\Theta)$, letting
\[
S_{\mD_m}(\phi)
 := \min_{w\in X_{\mD_m}} 
\bigl(\|\Pi_{\mD_m} w - \phi\|_{L^{\widehat{p}}} + \|\nabla_{\mD_m} w-\nabla \phi\|_{L^p}\bigr),
\]
we have $S_{\mD_m}(\phi)\rightarrow 0$ as $m\to \infty$,
\item  for all $\phi\in L^2$, $\Pi_{\mD_m}\mI_{\mD_m} \phi\rightarrow \phi$ in $ L^2$ as $m\to \infty$ 
\item $\dtDm \rightarrow 0$ as $m\to \infty$.
\end{itemize}
\end{definition}
It follows from the consistency property that there exists a constant $C_{u_0}>0$ not depending on $m$ such that
\begin{equation}\label{eq: u0}
\|\Pi_{\mD_m}u^{(0)}\|_{L^2} \leq C_{u_0}.
\end{equation}

\begin{definition}[Limit-conformity]
A sequence $(\mD_m)_{m\in\N}$ of space-time gradient discretisations in the sense of Definition~\ref{def: gdm} is said to be limit-conforming if, for all $\vphi\in W^{\div,p'}(\Theta):=\{\vphi\in  L^{p'}(\Theta)^d\,:\,\div \vphi\in L^{p'}(\Theta)\}$
letting 
\[
W_{\mD_m}(\vphi):=\max_{v\in X_{\mD_m}\backslash \{0\}}
\frac{\bigg|\displaystyle\int_\Omega \bigl(\nabla_{\mD_m}v(\x)\cdot \vphi(\x)+\Pi_{\mD_m}v(\x)\div\vphi(\x)\bigr)d\x\bigg|}{\|\nabla_{\mD_m}v\|_{L^p}},
\]
we have $W_{\mD_m}(\vphi)\rightarrow 0$ as $m\rightarrow\infty$.
\end{definition}

\begin{definition}[Compactness]\label{def:GD.comp}
A sequence $(\mD_m)_{m\in\N}$ of space-time gradient discretisations in the sense of Definition~\ref{def: gdm} is said to be 
compact if 
\[
\lim_{\bxi\rightarrow 0}\sup_{m\in\N}\, T_{\mD_m}(\bxi) = 0,
\]
where 
\[
T_{\mD_m}(\bxi):=\max_{v\in X_{\mD_m}\backslash \{0\}}
\frac{\|\Pi_{\mD_m}v(\cdot + \bxi)-\Pi_{\mD_m}v\|_{L^p(\R^d)}}{\|\nabla_{\mD_m}v\|_{L^p}},\quad\forall \bxi\in\R^d,
\]
with $\Pi_{\mD_m}v$ extended by $0$ outside $\Theta$.
\end{definition}

\begin{remark} 
Let us recall the usual definition of compactness of a family of GDs is \cite{Droniou.et.al2018}: for any $(v_m)_{m\in\N}$ such that $v_m\in \mD_m$ for all $m$ and $(\|\nabla_{\mD_m}v_m\|_{L^p})_{m\in\N}$ is bounded, the sequence $(\Pi_{\mD_m}v_m)_{m\in\N}$ is relatively compact in $L^p$. Definition \ref{def:GD.comp} is actually an equivalent characterisation of this compactness property \cite[Lemma 2.21]{Droniou.et.al2018}, which is more suitable for the analysis of time-dependent problems. Indeed, $T_{\mD_m}(\bxi)$ enables an estimate of the space-translates of $\Pi_{\mD_m}v$ which, when combined with time-translates, are at the core of space-time compactness results (such as the Aubin--Simon and Kolmogorov theorems).
\end{remark}

A sequence of GDs that is compact also satisfies another important property: the coercivity \cite[Lemma 2.10]{Droniou.et.al2018}.
\begin{lemma}[Coercivity of sequences of GDs]
If a sequence $(\mD_m)_{m\in\N}$ of space-time gradient discretisations in the sense of Definition~\ref{def: gdm} is compact, then it is coercive: there exists a constant $C_p$ such that 
\[
\max_{v\in X_{\mD_m}\backslash \{0\}}\frac{\|\Pi_{\mD_m}v\|_{L^p}}{\|\nabla_{\mD_m}v\|_{L^p}} \leq C_{p},
\quad \forall m\in\N.
\]
\end{lemma}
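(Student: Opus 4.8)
The statement to prove is a discrete Poincaré inequality, and the plan is to derive it purely from the control of space-translates supplied by the compactness property, using the boundedness of $\Theta$ to turn a bound on small translates into a bound on the function itself. First I would fix $m\in\N$ and $v\in X_{\mD_m}\setminus\{0\}$ (note that $\|\nabla_{\mD_m}v\|_{L^p}\neq 0$ since $v\mapsto\|\nabla_{\mD_m}v\|_{L^p}$ is a norm on $X_{\mD_m,0}$ by Definition~\ref{def: gdm}, so all quotients below are well defined), and abbreviate $w:=\Pi_{\mD_m}v$, understood to be extended by $0$ outside $\Theta$; thus $\|w\|_{L^p(\R^d)}=\|\Pi_{\mD_m}v\|_{L^p}$.

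The key device is the following. Because $\Theta$ is bounded, any vector $\bzeta\in\R^d$ with $|\bzeta|>\diam(\Theta)$ satisfies $\Theta\cap(\Theta-\bzeta)=\emptyset$, so $w$ and $w(\cdot+\bzeta)$ have disjoint supports and therefore
\[
\|w(\cdot+\bzeta)-w\|_{L^p(\R^d)}^p=\|w(\cdot+\bzeta)\|_{L^p(\R^d)}^p+\|w\|_{L^p(\R^d)}^p=2\|\Pi_{\mD_m}v\|_{L^p}^p,
\]
i.e.\ $\|\Pi_{\mD_m}v\|_{L^p}=2^{-1/p}\|w(\cdot+\bzeta)-w\|_{L^p(\R^d)}$. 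It remains to bound the right-hand side, but $|\bzeta|$ is large whereas compactness only controls small translates; I would bridge this with a telescoping argument. Fix $\bzeta$ with $|\bzeta|=\diam(\Theta)+1$, write $\bzeta=K\mathbi{\eta}$ with $K\in\N$ and $|\mathbi{\eta}|=|\bzeta|/K$ small, decompose $w(\cdot+\bzeta)-w=\sum_{j=1}^{K}\bigl(w(\cdot+j\mathbi{\eta})-w(\cdot+(j-1)\mathbi{\eta})\bigr)$, and use the translation invariance of the $L^p(\R^d)$ norm to get
\[
\|w(\cdot+\bzeta)-w\|_{L^p(\R^d)}\le K\,\|w(\cdot+\mathbi{\eta})-w\|_{L^p(\R^d)}\le K\,T_{\mD_m}(\mathbi{\eta})\,\|\nabla_{\mD_m}v\|_{L^p}.
\]

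Finally I would invoke the compactness assumption to choose $\bzeta_0>0$, depending only on the sequence $(\mD_m)_{m\in\N}$, such that $\sup_{m\in\N}T_{\mD_m}(\mathbi{\eta})\le 1$ whenever $|\mathbi{\eta}|\le\bzeta_0$, and then take $K:=\lceil(\diam(\Theta)+1)/\bzeta_0\rceil$, which forces $|\mathbi{\eta}|=|\bzeta|/K\le\bzeta_0$. Combining the three estimates gives $\|\Pi_{\mD_m}v\|_{L^p}\le 2^{-1/p}K\,\|\nabla_{\mD_m}v\|_{L^p}$, with $K$ depending only on $\Theta$ and on $(\mD_m)_{m\in\N}$ but not on $m$ or $v$; hence $C_p:=2^{-1/p}\lceil(\diam(\Theta)+1)/\bzeta_0\rceil$ works. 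I do not expect a genuine obstacle: the only real idea is the disjoint-supports-plus-telescoping trick that converts control of infinitesimal translates into a Poincaré-type inequality, and once that is set up the rest is a short computation.
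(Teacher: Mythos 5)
Your proof is correct. Every step checks out: for $|\bzeta|>\diam(\Theta)$ the sets $\Theta$ and $\Theta-\bzeta$ are indeed disjoint, so the pointwise identity $|a-b|^p=|a|^p+|b|^p$ (valid when at most one of $a,b$ is nonzero) gives $\|w(\cdot+\bzeta)-w\|_{L^p(\R^d)}^p=2\|\Pi_{\mD_m}v\|_{L^p}^p$; the telescoping decomposition combined with translation invariance of the Lebesgue measure reduces the large translate to $K$ copies of a small one; and the uniform-in-$m$ limit in Definition~\ref{def:GD.comp} supplies a radius below which $\sup_m T_{\mD_m}(\cdot)\le 1$, yielding the explicit constant $C_p=2^{-1/p}\lceil(\diam(\Theta)+1)/\bzeta_0\rceil$ independent of $m$ and $v$. (Only cosmetic quibble: $\bzeta_0$ is a scalar radius while $\bzeta$ is a vector, and the lemma's $X_{\mD_m}$ should be read as $X_{\mD_m,0}$.)

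The paper itself gives no proof, citing Lemma~2.10 of the GDM monograph, where the standard argument is by contradiction: if coercivity failed one could normalise $\|\nabla_{\mD_m}v_m\|_{L^p}=1$ with $\|\Pi_{\mD_m}v_m\|_{L^p}\to\infty$, contradicting the sequential relative compactness of $(\Pi_{\mD_m}v_m)_m$ in $L^p$. Your route is genuinely different and, in the context of this paper, arguably more natural: it works directly from the translate-based characterisation of compactness that the paper actually adopts in Definition~\ref{def:GD.comp}, it is constructive (explicit constant, no subsequence extraction), and it is really a quantitative discrete Poincar\'e inequality rather than a soft compactness argument. The price is that it uses the boundedness of $\Theta$ explicitly, but that is assumed throughout the paper, so nothing is lost.
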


Finally, we will need sequences of GDs that satisfy the following discrete Sobolev embeddings. As shown in \cite{Droniou.et.al2018}, and especially in Appendix B therein, such embeddings are known for all classical gradient discretisations.

\begin{definition}[Discrete Sobolev embeddings]\label{def:sobo}
A sequence of gradient discretisations $(\mD_m)_{m\in\N}$ satisfies the discrete Sobolev embeddings if there exists $p^*>p$ and $C\ge 0$ such that, for all $m\in\N$ and all $v_m\in X_{\mD_m,0}$, it holds $\|\Pi_{\mD_m}v\|_{L^{p^*}}\le C\|\nabla_{\mD_m}v\|_{L^p}$.
\end{definition}

\begin{remark}
Examples of GDs satisfying consistency, limit-conformity, compactness and discrete Sobolev embeddings are provided in~\cite[Part III]{Droniou.et.al2018}. In particular, it is shown therein that all classical schemes (conforming and non-conforming finite elements, some finite volume methods, etc.) correspond to such GDs.
\end{remark}

\subsection{Main results}
~
The solution to~\eqref{eq:pLaplace} is understood in the following sense.

\begin{definition}\label{def:wea sol}
Given $T\in(0,\infty)$, a weak martingale solution
$(\Omega,\mF,\F,\mP,W,u)$
to~\eqref{eq:pLaplace} consists of
\begin{enumerate}
\renewcommand{\labelenumi}{(\alph{enumi})}
\item
a filtered probability space
$(\Omega,\mF,\F,\mP)$ with the
filtration satisfying the usual (normal) conditions~\cite[page 71]{DaPrato2014}, 
\item a $\mK$-valued $\F$-adapted Wiener process with the covariance operator $\mQ$,
\item
a progressively measurable
process $u : [0,T]\times\Omega \rightarrow L^p$
\end{enumerate}
such that
\begin{enumerate}
\item
 There is a ball $B_{\mathrm{w}}$ of $L^2$, endowed with the weak topology, such that, $\mP$-a.s. $\omega\in\Omega$, $u(\cdot,\omega) \in C([0,T];B_\mathrm{w})$.
\item
$\E\left(
\sup_{t\in[0,T]}\|u(t)\|^2_{L^2}
\right) < \infty$;
\item 
$
\E\left(
\|u\|^p_{L^p(0,T;W_0^{1,p}(\Theta))}
\right)
<\infty$;
\item
for every  $t\in[0,T]$,
for all $\psi\in W_0^{1,p}(\Theta)\cap L^{\widehat{p}}(\Theta)$,
$\mP$-a.s.:
\begin{align*}
\big\langle u(t),\psi\big\rangle_{L^2}
-
\big\langle  u_0,\psi\big\rangle_{L^2}
&+
\int_0^t \big\langle a(u(s),\nabla u(s)), \nabla\psi\big\rangle_{L^{p'},L^p}\,ds\\
&=
 \big\langle \int_0^t  f(u)(s,\cdot)dW(s),  \psi\big\rangle_{L^2}\,,
\end{align*}
where the stochastic integral above is the It\^o integral in $L^2(\Theta)$.
\end{enumerate}
\end{definition}

\begin{remark}[Weak solution]
The usage of the dual $L^p,L^{p'}$ spaces is mandated by the growth in $\nabla u$ of the nonlinear Leray--Lions function $a(u,\nabla u)$. As seen in \cite{ler-65-res} for example, the standard energy space for this operator is $W^{1,p}(\Theta)$; this is the space in which estimates can be obtained (using $u$ itself as a test function) that lead to existence of a solution. Hence, due to the growth assumption \eqref{eq: a2}, $a(u,\nabla u)$ is expected to belong to $L^{p'}(\Theta)^d$.
On the contrary, the time derivative $\partial_t u$ is linear in $u$, which is why the $L^2$ duality product $\big\langle u(t),\psi\big\rangle_{L^2}$ naturally
appears when integrating this term against a test function. 
\end{remark}

\begin{remark}[Continuity of the solution] 
The weak continuity of $u(\omega,\cdot):[0,T]\to B_{\mathrm{w}}$ implies its continuity $[0,T]\to H^{-1}(\Theta)$ for the standard norm topology on $H^{-1}(\Theta)$.
\end{remark}

The main result of this paper is the following theorem, which states the existence of a solution to the GS and its convergence, up to a subsequence, towards a weak martingale solution of the continuous problem.

\begin{theorem}\label{theo:main}
Assume that we are given an initial data $u_0\in L^2(\Theta)$ and $T>0$.
Let $(\mD_m)_{m\in\N}$ be a sequence of gradient discretisations that is consistent, limit-conforming, compact, and satisfies the discrete Sobolev embeddings.
 For every $m\geq 1$, there exists a random process $u_m$ solution to the gradient scheme (Algorithm~\ref{alg: GS} with $\mD:=\mD_m$).

Moreover, there exists a weak martingale solution $(\widetilde{\Omega},\widetilde{\mF},(\widetilde{\mF}_t)_{t\in[0,T]},\widetilde{\mP},\widetilde{W},\widetilde{u})$ to~\eqref{eq:pLaplace} in the sense of Definition~\ref{def:wea sol}, and a sequence $\{\widetilde{u}_m\}$ of random processes defined on $\widetilde{\Omega}$ with the same law as $u_m$, so that up to a subsequence, the following convergences hold
\begin{align*}
\Pi_{\mD_m}\widetilde{u}_m &\rightarrow \widetilde{u},\quad \widetilde{\mP}-\text{a.s. in }  L^p(\Theta_T)\\
\nabla_{\mD_m}\widetilde{u}_m &\rightarrow \nabla\widetilde{u},\quad \widetilde{\mP}-\text{a.s. in }  \bigl(L^p(\Theta_T)\bigr)_\mathrm{w}.
\end{align*}
\end{theorem}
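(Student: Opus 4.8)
The plan is to follow the classical strategy for proving convergence of numerical schemes to stochastic PDEs via compactness and the Skorohod representation theorem, adapted to the non-conforming GDM setting using Discrete Functional Analysis. First I would establish existence of a solution $u_m=(u_m^{(n)})_n$ to the gradient scheme \eqref{eq: gdmscheme} at each fixed $m$: for each realisation and each time step, \eqref{eq: gdmscheme} is a deterministic stationary Leray--Lions problem, whose solvability follows from a standard Brouwer-type topological-degree or fixed-point argument (using \eqref{eq: a1}--\eqref{eq: a3} for coercivity and the fact that $v\mapsto\|\nabla_{\mD_m}v\|_{L^p}$ is a norm on the finite-dimensional space $X_{\mD_m,0}$); adaptedness to $(\mF_N^n)_n$ follows because $u^{(n+1)}$ depends only on $u^{(n)}$ and $\Delta^{(n+1)}W$.

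Next I would derive uniform-in-$m$ a priori estimates (this is Section~\ref{sec:priori}). Testing \eqref{eq: gdmscheme} with $\phi=u^{(n+1)}$, summing over $n$, using \eqref{eq: a1}, the Itô-isometry-type control of the discrete stochastic increments via \eqref{eq: f}, the Burkholder--Davis--Gundy inequality, and a discrete Gronwall argument, I expect to obtain $\E\bigl(\sup_n\|\Pi_{\mD_m}u_m^{(n)}\|_{L^2}^2\bigr)\le C$ and $\E\bigl(\|\nabla_{\mD_m}u_m\|_{L^p(\Theta_T)}^p\bigr)\le C$, together with higher moments (needed for tightness and to pass to the limit), uniformly in $m$. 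I would also estimate the time-translates of $\Pi_{\mD_m}u_m$: writing the increment $\Pi_{\mD_m}u_m(t+\tau)-\Pi_{\mD_m}u_m(t)$ as a drift part (bounded in a dual space via \eqref{eq: a2} and limit-conformity) plus a martingale part (controlled via \eqref{eq: f}), one gets a fractional-in-time Sobolev bound, which combined with the space-translate control from the compactness property of $(\mD_m)$ (Definition~\ref{def:GD.comp}) yields, via a Kolmogorov-type tightness criterion, tightness of the laws of $(\Pi_{\mD_m}u_m)$ on $L^p(\Theta_T)$; tightness of the gradients on $(L^p(\Theta_T))_{\mathrm w}$ and of the laws of $W$ is immediate from the uniform bounds. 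This is the content of Section~\ref{sec:tight}: apply Prokhorov and then the Skorohod representation theorem to obtain a new probability space $(\widetilde\Omega,\widetilde\mF,\widetilde\mP)$, random variables $\widetilde u_m$ with the same law as $u_m$, a process $\widetilde u$ and Wiener processes $\widetilde W_m\to\widetilde W$, with $\Pi_{\mD_m}\widetilde u_m\to\widetilde u$ $\widetilde\mP$-a.s.\ in $L^p(\Theta_T)$ and $\nabla_{\mD_m}\widetilde u_m\rightharpoonup G$ weakly in $L^p(\Theta_T)^d$; the consistency and limit-conformity of $(\mD_m)$ force $G=\nabla\widetilde u$ with $\widetilde u\in L^p(0,T;W^{1,p}_0(\Theta))$. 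The continuity property $\widetilde u(\cdot,\omega)\in C([0,T];B_{\mathrm w})$ and the construction of the filtration and of the martingale term follow as in \cite{ondrejat2020numerical,BrzeCareProhl2013}.

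The final step (Section~\ref{sec:limit}) is to pass to the limit in the scheme \eqref{eq: gdmscheme}, rewritten in continuous-in-time form, to recover item (4) of Definition~\ref{def:wea sol}. Linear terms pass using the a.s.\ $L^p$ (hence, with the uniform $L^2$ moments, $L^2$) convergence; the stochastic term passes using the convergence of $\widetilde W_m$, the continuity and linear growth \eqref{eq: f} of $f$, and the a.s.\ convergence of $\Pi_{\mD_m}\widetilde u_m$, together with a uniform-integrability argument to upgrade to convergence of the stochastic integrals. The main obstacle is identifying the weak limit of the nonlinear term $a(\Pi_{\mD_m}\widetilde u_m^{(n+1)},\nabla_{\mD_m}\widetilde u_m^{(n+1)})$: since $a$ is only assumed continuous, fully nonlinear in $\nabla u$, and merely monotone in the sense \eqref{eq: a3} (not strictly monotone, and with $u$-dependence), strong convergence of the gradients is not available and one must use a Minty--Browder monotonicity argument. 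The delicate point is that this argument must be carried out in the stochastic, non-conforming, time-discrete setting: one writes the discrete energy balance, passes to the $\liminf$ using weak lower semicontinuity and the Itô formula for $\|\widetilde u(t)\|_{L^2}^2$ (which requires the continuity established earlier), and then tests with $a(\cdot,\nabla\psi)$ for arbitrary $\psi$ and exploits \eqref{eq: a3} to conclude that the weak limit of $a(\Pi_{\mD_m}\widetilde u_m,\nabla_{\mD_m}\widetilde u_m)$ equals $a(\widetilde u,\nabla\widetilde u)$; handling the interplay between the time-step limit, the expectation, and the stochastic integral in this monotonicity argument is where the bulk of the technical work lies, and is where the auxiliary results proved in the Appendix are used.
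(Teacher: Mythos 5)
Your proposal follows essentially the same route as the paper: existence and uniform moment bounds by testing with $u^{(n+1)}$ plus Burkholder--Davis--Gundy and a discrete Gronwall argument, time-translate estimates combined with the compactness of the gradient discretisations to get tightness, a change of probability space, identification of $\nabla\widetilde u$ via limit-conformity, Kolmogorov-test continuity, a martingale-representation argument for the noise term, and a Minty monotonicity argument for the Leray--Lions term. The only refinement worth noting is that, because the path space involves weak topologies (on the gradients and on the martingale part) that are not Polish, the paper invokes Jakubowski's version of the Skorohod theorem rather than the classical Prokhorov--Skorohod pair; otherwise your outline matches the paper's proof step for step.
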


\begin{remark}
The existence of a weak solution to \eqref{eq:pLaplace} is obtained as a by-product of the convergence analysis. This existence is not assumed \emph{a priori}, and no regularity property is required on the continuous solution to get the convergence of the GDM.
\end{remark}

\begin{remark}[Convergence to a strong solution for models with uniqueness]
Theorem \ref{theo:main} ensures the convergence of a certain subsequence of the gradient discretisations to a solution of equation \eqref{eq:pLaplace} under fairly general conditions that yield the existence of a weak martingale solution only. The almost sure convergence of the subsequence can be proved only on a new probability space via the Skorohod theorem. Stronger results can be obtained for specific models, which admit a unique pathwise solution. In this case the gradient scheme converges to a strong solution of equation \eqref{eq:pLaplace} on the initial probability space.
\end{remark}
\begin{remark}[Driving noise]
In this paper we simplify the presentation by considering the algorithm driven by Gaussian increments. In other words, we discretise Brownian Motion in time but not in space. For purposes of computations one would need to approximate the Wiener process by random walks that are discrete in space and time. By Donsker-type theorems, it is well known that normalised random walks converge weakly to a Brownian Motion; hence, all our arguments can easily incorporate this additional discretisation. Note that, after using the Skorohod theorem, we would need to also establish the convergence of discrete martingales to stochastic integrals with respect to Wiener process. Such results follow from the BDG inequalities in UMD spaces for martingales without continuity assumptions, see a recent result of \cite{yaro}. An excellent discussion of this problem (among many others) can be found in \cite{ondrejat2020numerical,BrzeCareProhl2013}. 
\end{remark}
\section{A priori estimates}\label{sec:priori}
We first provide a priori estimates for the solution $u$ to~\eqref{eq: gdmscheme} and then deduce its existence in the following lemma. For legibility, we drop the index $m$ in sequences of gradient discretisations, and we simply write $\mD$ instead of $\mD_m$.
\begin{lemma}\label{lem: priori}
There exists at least one $u_{\mD}$ solution to the Algorithm~\ref{alg: GS} and there exists a constant $C_{f,a,T,\mQ, u_0}>0$ depending only on $f,a,T,\mQ$ and $ u_0$ such that
\begin{multline}\label{eq:apriori}
\E\left[\max_{1\leq n\leq N}\|\Pi_\mD u^{(n)}\|^2_{L^2}
+
\|\nabla_\mD u\|^p_{L^p(\Theta_T)}
+
\sum_{n=0}^{N-1} \|\Pi_\mD u^{(n+1)}-\Pi_\mD u^{(n)}\|^2_{L^2}
\right]\\
\leq C_{f,a,T,\mQ, u_0}.
\end{multline}
We also have for any integer number $q\geq 1$
\begin{equation}\label{eq:apriori.moments}
\E\left[\max_{1\leq n\leq N}\|\Pi_\mD u^{(n)}\|^{2^q}_{L^2}
+
\|\nabla_\mD u\|^{p2^{q-1}}_{L^p(\Theta_T)}
\right]
\leq C_{f,a,T,\mQ, u_0,q}.
\end{equation}
\end{lemma}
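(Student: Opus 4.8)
The plan is to establish existence of the discrete solution by a step-by-step finite-dimensional argument, then to derive the estimates by testing \eqref{eq: gdmscheme} with the current iterate and running a discrete stochastic Gr\"onwall argument.

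\textbf{Existence and adaptedness.} I would argue by induction on $n$. Given the $\mF_N^n$-measurable iterate $u^{(n)}$ and the increment $\Delta^{(n+1)}W$, equation \eqref{eq: gdmscheme} becomes, for each fixed $\omega$, a deterministic nonlinear problem on the finite-dimensional space $X_{\mD,0}$. The associated map on $X_{\mD,0}$ is continuous (continuity of $a$, finite dimension) and coercive, the coercivity following from \eqref{eq: a1} together with the fact that $\|\nabla_\mD\cdot\|_{L^p}$ is a norm on $X_{\mD,0}$; a Brouwer / topological-degree argument then yields at least one solution $u^{(n+1)}$ (no uniqueness is available or needed, since $a$ is not assumed monotone in its first variable). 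Since the solution set depends measurably, with closed values, on the data $(u^{(n)},\Delta^{(n+1)}W)$, a measurable selection theorem furnishes an $\mF_N^{n+1}$-measurable choice; this gives both existence and adaptedness to $(\mF_N^n)_n$.

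\textbf{Energy estimate.} First I would choose $\phi=u^{(n+1)}$ in \eqref{eq: gdmscheme}, use the identity $\langle a-b,a\rangle_{L^2}=\tfrac12\|a\|_{L^2}^2-\tfrac12\|b\|_{L^2}^2+\tfrac12\|a-b\|_{L^2}^2$ on the discrete time-derivative term and the coercivity \eqref{eq: a1} on the Leray--Lions term, to obtain
\[
\tfrac12\|\Pi_\mD u^{(n+1)}\|_{L^2}^2-\tfrac12\|\Pi_\mD u^{(n)}\|_{L^2}^2+\tfrac12\|\Pi_\mD u^{(n+1)}-\Pi_\mD u^{(n)}\|_{L^2}^2+c_1\dtD\|\nabla_\mD u^{(n+1)}\|_{L^p}^p\le \langle f(\Pi_\mD u^{(n)})\Delta^{(n+1)}W,\Pi_\mD u^{(n+1)}\rangle_{L^2}.
\]
I would split the right-hand side as $\langle f(\Pi_\mD u^{(n)})\Delta^{(n+1)}W,\Pi_\mD u^{(n)}\rangle_{L^2}+\langle f(\Pi_\mD u^{(n)})\Delta^{(n+1)}W,\Pi_\mD u^{(n+1)}-\Pi_\mD u^{(n)}\rangle_{L^2}$: the first term is a martingale increment (zero $\mF_N^n$-conditional expectation, since $\Delta^{(n+1)}W$ is centered and independent of $\mF_N^n$ while $f(\Pi_\mD u^{(n)})$ and $\Pi_\mD u^{(n)}$ are $\mF_N^n$-measurable), while the second is controlled, by Young's inequality, by $\tfrac14\|\Pi_\mD u^{(n+1)}-\Pi_\mD u^{(n)}\|_{L^2}^2+\|f(\Pi_\mD u^{(n)})\Delta^{(n+1)}W\|_{L^2}^2$, the first part of which is absorbed on the left. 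Summing over $n=0,\dots,j-1$, taking expectations, and using the It\^o isometry over each step with $\mathrm{Tr}(\mQ)<\infty$ and the growth \eqref{eq: f} — that is, $\E[\|f(\Pi_\mD u^{(n)})\Delta^{(n+1)}W\|_{L^2}^2\mid\mF_N^n]\le \dtD\,\mathrm{Tr}(\mQ)(F_1\|\Pi_\mD u^{(n)}\|_{L^2}^2+F_2)$ — together with \eqref{eq: u0}, should yield $\E\|\Pi_\mD u^{(j)}\|_{L^2}^2+\E\sum_{n<j}\|\Pi_\mD u^{(n+1)}-\Pi_\mD u^{(n)}\|_{L^2}^2+\E\|\nabla_\mD u\|_{L^p(\Theta_T)}^p\le C(1+\dtD\sum_{n<j}\E\|\Pi_\mD u^{(n)}\|_{L^2}^2)$, and the discrete Gr\"onwall lemma then gives a bound uniform in $j$ and $N$.

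\textbf{Maximal estimate, higher moments, and the main difficulty.} To get the supremum inside the expectation in \eqref{eq:apriori}, I would take the maximum over $j$ in the summed identity before taking expectations, bound $\E[\max_j|M_j|]$ with $M_j:=\sum_{n<j}\langle f(\Pi_\mD u^{(n)})\Delta^{(n+1)}W,\Pi_\mD u^{(n)}\rangle_{L^2}$ via the Burkholder--Davis--Gundy inequality, estimate its quadratic variation by $\max_n\|\Pi_\mD u^{(n)}\|_{L^2}^2$ times $\sum_n\|f(\Pi_\mD u^{(n)})\Delta^{(n+1)}W\|_{L^2}^2$, and absorb a small multiple of $\E[\max_n\|\Pi_\mD u^{(n)}\|_{L^2}^2]$ into the left-hand side. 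Estimate \eqref{eq:apriori.moments} would then follow by induction on $q$: assuming the bound at level $q-1$, raise the summed identity (with the final $L^2$ and time-difference terms moved to the right-hand side) to the power $2^{q-1}$, take the maximum over $j$ and then expectations, apply BDG to $\E[\max_j|M_j|^{2^{q-1}}]$, control the resulting quadratic-variation moment with the level-$(q-1)$ bound and \eqref{eq: f}, and close again by the discrete stochastic Gr\"onwall lemma; reading the same relation at $j=N$ with the $L^2$ term discarded gives the moment bound on $\|\nabla_\mD u\|_{L^p(\Theta_T)}^{p2^{q-1}}$. I expect the main obstacle to be precisely this stochastic Gr\"onwall bookkeeping: keeping the martingale term under control through BDG while absorbing the maximal term, and propagating the estimate through the induction on $q$ with constants independent of $N$.
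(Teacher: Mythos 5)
Your proposal is correct, and for the first-moment estimate \eqref{eq:apriori} it coincides with the paper's proof: same test function $\phi=u^{(n+1)}$, same algebraic identity on the discrete time derivative, same splitting of the noise term into a zero-mean martingale increment plus a Young-absorbed correction, then discrete Gr\"onwall for $\max_n\E[\cdot]$ followed by BDG to bring the maximum inside the expectation; the existence/adaptedness sketch (Brouwer plus measurable selection) is what the paper delegates to \cite[Theorem 2.44]{Droniou.et.al2018} and \cite{BanasBrzeet2014}. Where you genuinely diverge is the higher $L^2$ moments \eqref{eq:apriori.moments}: you raise the \emph{summed} energy inequality to the power $2^{q-1}$ and then apply BDG, whereas the paper multiplies the \emph{one-step} identity \eqref{eq: pri1a} by $\|\Pi_\mD u^{(n+1)}\|_{L^2}^{2^q-2}$ (iteratively) to produce the telescoping inequality \eqref{eq: pri8b} for $\|\Pi_\mD u^{(n+1)}\|_{L^2}^{2^q}-\|\Pi_\mD u^{(n)}\|_{L^2}^{2^q}$. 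The paper's device preserves the exact martingale structure at the level of $2^q$-th powers -- the cross term $\langle f(\Pi_\mD u^{(n)})\Delta^{(n+1)}W,\Pi_\mD u^{(n)}\rangle_{L^2}\|\Pi_\mD u^{(n)}\|_{L^2}^{2^q-2}$ still has zero expectation -- so the non-maximal bound $\max_n\E[\|\Pi_\mD u^{(n)}\|_{L^2}^{2^q}]$ comes from a plain Gr\"onwall; your route loses this cancellation and must invoke Burkholder even for the non-maximal moment, and the induction hypothesis at level $q-1$ alone does not close the estimate (the bracket $\sum_n\dtD\|f(\Pi_\mD u^{(n)})\|^2_{\mL(\mK,L^2)}\|\Pi_\mD u^{(n)}\|_{L^2}^2$ raised to the power $2^{q-2}$ produces $\|\Pi_\mD u^{(n)}\|_{L^2}^{2^q}$-terms, i.e.\ level $q$), so the level-$q$ Gr\"onwall you invoke at the end is essential, not optional. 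Both routes work; note also that your treatment of the It\^o-correction term $\bigl(\sum_n\|f(\Pi_\mD u^{(n)})\|^2_{\mL(\mK,L^2)}\|\Delta^{(n+1)}W\|^2_{\mK}\bigr)^{2^{q-1}}$ implicitly needs the Gaussian moment bound $\E[\|\Delta^{(n+1)}W\|_\mK^{2r}]\le C_{\mQ,r}(\dtD)^r$, which the paper makes explicit (citing \cite[Corollary 1.1]{Ichikawa1982}) in \eqref{eq: pri12b}; and your reading of the relation at $j=N$ for the gradient term is exactly the paper's own argument \eqref{eq: pri10b}.
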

\begin{proof}

\underline{A priori estimates on $\Pi_\mD u$ in \eqref{eq:apriori}.} \\
We first prove a priori energy estimates of solution $u$.
We choose the test function $\phi = u^{(n+1)} \in X_{\mD,0}$ in~\eqref{eq: gdmscheme} and use the following fundamental identity
\begin{equation}\label{eq: pri0}
(a-b) a = \frac12(a^2 - b^2) + \frac12 (a-b)^2,
\quad \forall a,b\in \R,
\end{equation}
to write
\begin{align}
\frac12\|\Pi_\mD u^{(n+1)}\|^2_{L^2}
&+
\frac12 \|\Pi_\mD u^{(n+1)}-\Pi_\mD u^{(n)}\|^2_{L^2}\nonumber\\
&+
\dtD \big\langle a(\Pi_\mD u^{(n+1)},\nabla_\mD u^{(n+1)}),\,  \nabla_\mD u^{(n+1)}\big\rangle_{L^{p'},L^{p}}\nonumber\\
&=
\frac12\|\Pi_\mD u^{(n)}\|^2_{L^2}
+
\big\langle f(\Pi_\mD u^{(n)})\Delta^{(n+1)}W,\,  \Pi_\mD (u^{(n+1)} -u^{(n)}) \big\rangle_{L^2}\nonumber\\
&\quad+
\big\langle f(\Pi_\mD u^{(n)})\Delta^{(n+1)}W,\,  \Pi_\mD u^{(n)} \big\rangle_{L^2}.
\label{eq: pri1a}
\end{align}
By taking the sum in the above equation from $n=0$ to $n=k$, for an arbitrary $k\in\{0,\ldots,N-1\}$, and using~\eqref{eq: a1}, Cauchy--Schwarz inequality and the Young inequality $ab\le a^2+\frac{b^2}{4}$ for the second term in the right hand side, we obtain
\begin{align}\label{eq: pri1}
\frac12\|\Pi_\mD u^{(k+1)}\|^2_{L^2}
&+
\frac14 \sum_{n=0}^k \|\Pi_\mD u^{(n+1)}-\Pi_\mD u^{(n)}\|^2_{L^2}
+
c_1\sum_{n=0}^k \dtD \|\nabla_\mD u^{(n+1)}\|^p_{L^p}\nonumber\\
&\leq
\frac12\|\Pi_\mD u^{(0)}\|^2_{L^2}
+
\sum_{n=0}^k 
\|f(\Pi_\mD u^{(n)})\|^2_{\mL(\mK,L^2)}\|\Delta^{(n+1)}W\|^2_{\mK}\nonumber\\
&\quad+
\sum_{n=0}^k 
\big\langle f(\Pi_\mD u^{(n)})\Delta^{(n+1)}W,\,  \Pi_\mD u^{(n)} \big\rangle_{L^2}.
\end{align}

Note that the last term on the right hand side of~\eqref{eq: pri1} vanishes when taking its expectation since $\Pi_\mD u^{(n)}$ is $\mF_{t^{(n)}}$ measurable, and thus independent with $\Delta^{(n+1)}W$ which has a zero expectation.  We obtain from~\eqref{eq: pri1}
\begin{align}
\frac12 \E\Big[
\|\Pi_\mD  {}&u^{(k+1)}\|^2_{L^2}
+\frac14 \sum_{n=0}^k \|\Pi_\mD u^{(n+1)}-\Pi_\mD u^{(n)}\|^2_{L^2}\Big]\nonumber\\
&+\E\Big[c_1\sum_{n=0}^k \dtD \|\nabla_\mD u^{(n+1)}\|^p_{L^p}
\Big]\nonumber\\
&\leq 
\frac12 
\|\Pi_\mD u^{(0)}\|^2_{L^2}
+
\sum_{n=0}^k \E\big[
\|f(\Pi_\mD u^{(n)})\|^2_{\mL(\mK,L^2)}\|\Delta^{(n+1)}W\|^2_{\mK}
\big].
\label{eq: pri2}
\end{align}
By the tower property of the conditional expectation, the independence of the increments of the Wiener process, and the assumption on $f$ we find for the last term
\begin{align}\label{eq: pri3}
\E\big[
\|f(\Pi_\mD u^{(n)})\|^2_{\mL(\mK,L^2)}{}&\|\Delta^{(n+1)}W\|^2_{\mK}
\big] \nonumber \\
&= \E \Big[
\E\big[
\|f(\Pi_\mD u^{(n)})\|^2_{\mL(\mK,L^2)}\|\Delta^{(n+1)}W\|^2_{\mK}|\mF_{t^{(n)}}
\big]
\Big]\nonumber\\
&=
 \E \Big[\|f(\Pi_\mD u^{(n)})\|^2_{\mL(\mK,L^2)}
\E\big[
\|\Delta^{(n+1)}W\|^2_{\mK}|\mF_{t^{(n)}}
\big]
\Big]\nonumber\\
&=
\dtD (\text{Tr} \mQ)\E [\|f(\Pi_\mD u^{(n)})\|^2_{\mL(\mK,L^2)}]\nonumber\\
&\leq 
\dtD (\text{Tr} \mQ) \big( F_1\E [\|\Pi_\mD u^{(n)}\|^2_{L^2}]+ F_2 \big).
\end{align}
Together with
\eqref{eq: pri2}, this implies
\begin{equation*}
\E\big[
\|\Pi_\mD u^{(k+1)}\|^2_{L^2}\big]
\leq 
\|\Pi_\mD u^{(0)}\|^2_{L^2}+ 2(\text{Tr} \mQ) F_2 T
+ 2(\text{Tr} \mQ) F_1
\sum_{n=0}^k \dtD \E [\|\Pi_\mD u^{(n)}\|^2_{L^2}].
\end{equation*}
By applying the discrete version of Gronwall's lemma to the above inequality and using~\eqref{eq: u0}, we obtain
\begin{equation}\label{eq: pri4}
\max_{1\leq n\leq N}\E\big[
\|\Pi_\mD u^{(n)}\|^2_{L^2}\big]
\leq  C_{f,a,T,\mQ,u_0}.
\end{equation}
It follows from~\eqref{eq: pri2}--\eqref{eq: pri4}
that
\[\E\big[
\|\nabla_\mD u\|^p_{L^p(\Theta_T)}
+
\sum_{n=0}^{N-1} \|\Pi_\mD u^{(n+1)}-\Pi_\mD u^{(n)}\|^2_{L^2}
\big]
\leq C_{f,T,\mQ,u_0}.
\]

By taking the maximum of~\eqref{eq: pri1} over $0\leq k\leq N-1$ and applying the expectations, we get 
\begin{align}\label{eq: pri5}
\E\big[\max_{1\leq n\leq N}\|\Pi_\mD u^{(n)}\|^2_{L^2}\big]
&\leq 
\|\Pi_\mD u^{(0)}\|^2_{L^2}
+
2\E\big[\sum_{n=0}^{N-1}
\|f(\Pi_\mD u^{(n)})\|^2_{\mL(\mK,L^2)}\|\Delta^{(n+1)}W\|^2_{\mK}
\big]\nonumber\\
&\quad+2\E\big[
\max_{0\leq k\leq N-1}\sum_{n=0}^k\big\langle f(\Pi_\mD u^{(n)})\Delta^{(n+1)}W,\,  \Pi_\mD u^{(n)} \big\rangle_{L^2}
\big].
\end{align}
To bound the last term in the right hand side, we treat the sum as the stochastic integral of a piecewise constant integrand and use the Burkholder--Davis--Gundy inequality:~\cite[Theorem 2.4]{Zdzis1997}
\begin{align}\label{eq: pri6}
\E\Big[{}&
\max_{0\leq k\leq N-1} \sum_{n=0}^k\big\langle f(\Pi_\mD u^{(n)})\Delta^{(n+1)}W,\,  \Pi_\mD u^{(n)} \big\rangle_{L^2}
\Big]\nonumber\\
&\leq
C\E\Big[\bigl(\sum_{n=0}^{N-1}\dtD\|f(\Pi_\mD u^{(n)})\|^2_{\mL(\mK,L^2)}\|\Pi_\mD u^{(n)}\|^2_{L^2}\bigr)^{1/2}
\Big]\nonumber\\
&\leq 
C\E\Big[\max_{0\leq n\leq N-1}\|\Pi_\mD u^{(n)}\|_{L^2}\bigl(\sum_{n=0}^{N-1} \dtD (F_1\|\Pi_\mD u^{(n)}\|^2_{L^2}+ F_2)\bigr)^{1/2}
\Big]\nonumber\\
&\leq 
\frac14\E\big[\max_{0\leq n\leq N}\|\Pi_\mD u^{(n)}\|^2_{L^2}\big]
+ C^2F_1
\sum_{n=0}^{N-1} \dtD \E\big[\|\Pi_\mD u^{(n)}\|^2_{L^2}\big]
+ C^2F_2 T \nonumber\\
&\leq 
\frac14\E\big[\max_{0\leq n\leq N}\|\Pi_\mD u^{(n)}\|^2_{L^2}\big]
+C^2F_1T \max_{0\leq n\leq N}\E\big[
\|\Pi_\mD u^{(n)}\|^2_{L^2}\big]
+ C^2F_2 T.
\end{align}
We use~\eqref{eq: pri3} to bound the second term in the right hand side of~\eqref{eq: pri5}.
\begin{align}
\E\Big[\sum_{n=0}^{N-1}{}&
\|f(\Pi_\mD u^{(n)})\|^2_{\mL(\mK,L^2)}\|\Delta^{(n+1)}W\|^2_{\mK}
\Big]\nonumber\\
&\leq 
 (\text{Tr} \mQ) F_1
\sum_{n=0}^{N-1}
\dtD \E [\|\Pi_\mD u^{(n)}\|^2_{L^2}]
+ (\text{Tr} \mQ) F_2 T\nonumber\\
&\leq 
(\text{Tr} \mQ) F_1 T 
\max_{1\leq n\leq N}\E\big[
\|\Pi_\mD u^{(n)}\|^2_{L^2}\big]
+ (\text{Tr} \mQ) F_2 T.
\label{eq: pri7}
\end{align}
By using~\eqref{eq: pri4},~\eqref{eq: pri6} and~\eqref{eq: pri7}, we deduce from~\eqref{eq: pri5} that
\[
\E\Big[\max_{1\leq n\leq N}\|\Pi_\mD u^{(n)}\|^2_{L^2}\Big]
\leq C_{f,a,T,\mQ,u_0},
\]
which completes the proof of the a priori estimates \eqref{eq:apriori}.

The existence of at least one solution $u$ to~\eqref{eq: gdmscheme} in the Algorithm~\ref{alg: GS} is then done as in the proof of~\cite[Theorem 2.44]{Droniou.et.al2018}. The adaptiveness (to the filtration) of the solution $u$ can be done exactly as in~\cite{BanasBrzeet2014}.

\medskip

\underline{Higher moments bound \eqref{eq:apriori.moments}.} \\
We adapt the ideas from~\cite{BrzeCareProhl2013}, where different type of difficulties had to be dealt with.

We will use induction to proof this result. 
First, from~\eqref{eq:apriori} we have the assertion for $q=1$.
We assume therefore that \eqref{eq:apriori.moments} holds for any integer number $\bar{q}\in [1,q-1]$, that is,
\begin{equation}\label{eq: pri7b}
\E\big[ \max_{1\leq n\leq N}
\|\Pi_\mD u^{(n)}\|^{2^{\bar{q}}}_{L^2}\big]
\leq 
 C_{f,a,T,\mQ,u_0,\bar{q}}.
\end{equation}

In what follow, we will prove that~\eqref{eq: pri7b} holds for  $\bar{q}=q$.
We begin by multiplying identity~\eqref{eq: pri1a} by $\|\Pi_\mD u^{(n+1)}\|^2_{L^2}$ and use the positive-definiteness~\eqref{eq: a1} of $a$ to obtain 
\begin{align}\label{eq: pri1b}
\frac12\|\Pi_\mD u^{(n+1)}\|^2_{L^2}
&\bigl(
\|\Pi_\mD u^{(n+1)}\|^2_{L^2}- \|\Pi_\mD u^{(n)}\|^2_{L^2}
\bigr)\nonumber\\
&+
\frac14 \|\Pi_\mD u^{(n+1)}\|^2_{L^2}\|\Pi_\mD u^{(n+1)}-\Pi_\mD u^{(n)}\|^2_{L^2}
\leq
I_1 + I_2,
\end{align}
where
\begin{align*}
I_1&:= \|\Pi_\mD u^{(n+1)}\|^2_{L^2}\big\langle f(\Pi_\mD u^{(n)})\Delta^{(n+1)}W,\,  \Pi_\mD (u^{(n+1)} -u^{(n)}) \big\rangle_{L^2},\\
I_2&:=
\|\Pi_\mD u^{(n+1)}\|^2_{L^2}
\big\langle f(\Pi_\mD u^{(n)})\Delta^{(n+1)}W,\,  \Pi_\mD u^{(n)} \big\rangle_{L^2}.
\end{align*}
By using the Cauchy--Schwarz and Young inequalities,
we estimate $I_1$ and $I_2$ as follows
\begin{align*}
I_1&\leq 
\|f(\Pi_\mD u^{(n)})\|^2_{\mL(\mK,L^2)}  \|\Delta^{(n+1)}W\|^2_\mK
\|\Pi_\mD u^{(n+1)}\|^2_{L^2}\\
&\quad+
\frac{1}{4} \|\Pi_\mD u^{(n+1)} - \Pi_\mD u^{(n)}\|^2_{L^2} \|\Pi_\mD u^{(n+1)}\|^2_{L^2}\\
&=
\|f(\Pi_\mD u^{(n)})\|^2_{\mL(\mK,L^2)}  \|\Delta^{(n+1)}W\|^2_\mK
\big[\|\Pi_\mD u^{(n)}\|^2_{L^2} +
\|\Pi_\mD u^{(n+1)}\|^2_{L^2}-\|\Pi_\mD u^{(n)}\|^2_{L^2}
\big]\\
&\quad+
\frac{1}{4} \|\Pi_\mD u^{(n+1)} - \Pi_\mD u^{(n)}\|^2_{L^2} \|\Pi_\mD u^{(n+1)}\|^2_{L^2}\\
&\leq 
\|f(\Pi_\mD u^{(n)})\|^2_{\mL(\mK,L^2)}  \|\Delta^{(n+1)}W\|^2_\mK
\|\Pi_\mD u^{(n)}\|^2_{L^2}\\
&\quad+
4\|f(\Pi_\mD u^{(n)})\|^4_{\mL(\mK,L^2)}  \|\Delta^{(n+1)}W\|^4_\mK
+
\frac{1}{16}\left(\|\Pi_\mD u^{(n+1)}\|^2_{L^2}-\|\Pi_\mD u^{(n)}\|^2_{L^2}
\right)^2\\
&\quad+
\frac{1}{4} \|\Pi_\mD u^{(n+1)} - \Pi_\mD u^{(n)}\|^2_{L^2} \|\Pi_\mD u^{(n+1)}\|^2_{L^2},
\end{align*}
and 
\begin{align*}
I_2
&=
\big\langle f(\Pi_\mD u^{(n)})\Delta^{(n+1)}W,\,  \Pi_\mD u^{(n)} \big\rangle_{L^2}
\big[\|\Pi_\mD u^{(n)}\|^2_{L^2} +
\|\Pi_\mD u^{(n+1)}\|^2_{L^2}-\|\Pi_\mD u^{(n)}\|^2_{L^2}
\big]\\
&\leq 
\big\langle f(\Pi_\mD u^{(n)})\Delta^{(n+1)}W,\,  \Pi_\mD u^{(n)} \big\rangle_{L^2}
\|\Pi_\mD u^{(n)}\|^2_{L^2}\\
&\quad +
4 \|f(\Pi_\mD u^{(n)})\|^2_{\mL(\mK,L^2)}  \|\Delta^{(n+1)}W\|^2_\mK\|\Pi_\mD u^{(n)}\|^2_{L^2}\\
&\quad +
\frac{1}{16}
\left(\|\Pi_\mD u^{(n+1)}\|^2_{L^2}-\|\Pi_\mD u^{(n)}\|^2_{L^2}
\right)^2.
\end{align*}
By using the above estimates together with~\eqref{eq: pri0}, we infer from~\eqref{eq: pri1b} that 
\begin{align}
\frac14 \|\Pi_\mD u^{(n+1)}\|^4_{L^2}
-
{}&\frac14 \|\Pi_\mD u^{(n)}\|^4_{L^2}
\leq 
5\|f(\Pi_\mD u^{(n)})\|^2_{\mL(\mK,L^2)}  \|\Delta^{(n+1)}W\|^2_\mK
\|\Pi_\mD u^{(n)}\|^2_{L^2}\nonumber\\
&+
4\|f(\Pi_\mD u^{(n)})\|^4_{\mL(\mK,L^2)}  \|\Delta^{(n+1)}W\|^4_\mK\nonumber\\
&+
\big\langle f(\Pi_\mD u^{(n)})\Delta^{(n+1)}W,\,  \Pi_\mD u^{(n)} \big\rangle_{L^2}
\|\Pi_\mD u^{(n)}\|^2_{L^2}.
\label{eq: pri2b}
\end{align}
Using~\eqref{eq: pri0} and~\eqref{eq: pri2b}, it is easily proved by induction on $q$ (the inductive step from $q$ to $q+1$ consisting in multiplying this estimate by $\|\Pi_\mD u^{(n+1)}\|^{2^q}_{L^2}$) that
\begin{align}
\frac{1}{2^q} \|\Pi_\mD u^{(n+1)}\|^{2^q}_{L^2}
-{}&
\frac{1}{2^q} \|\Pi_\mD u^{(n)}\|^{2^q}_{L^2}\nonumber\\
&\leq 
5\|f(\Pi_\mD u^{(n)})\|^2_{\mL(\mK,L^2)}  \|\Delta^{(n+1)}W\|^2_\mK
\|\Pi_\mD u^{(n)}\|^{2^q-2}_{L^2}\nonumber\\
&\quad+
4\|f(\Pi_\mD u^{(n)})\|^4_{\mL(\mK,L^2)}  \|\Delta^{(n+1)}W\|^4_\mK\|\Pi_\mD u^{(n)}\|^{2^q-4}_{L^2}\nonumber\\
&\quad+
\big\langle f(\Pi_\mD u^{(n)})\Delta^{(n+1)}W,\,  \Pi_\mD u^{(n)} \big\rangle_{L^2}
\|\Pi_\mD u^{(n)}\|^{2^q-2}_{L^2}.
\label{eq: pri8b}
\end{align}
Then, proceeding as in~\eqref{eq: pri3}, the first two terms in the right hand side of~\eqref{eq: pri2b} are estimated as follows
\begin{align}\label{eq: pri3b}
\E\big[
\|f(\Pi_\mD u^{(n)})\|^2_{\mL(\mK,L^2)} 
 {}&\|\Delta^{(n+1)}W\|^2_\mK
\|\Pi_\mD u^{(n)}\|^{2^q-2}_{L^2}
\big]\nonumber\\
&\leq 
\dtD (\text{Tr} \mQ) 
\E [(F_1\|\Pi_\mD u^{(n)}\|^2_{L^2}+F_2)\|\Pi_\mD u^{(n)}\|^{2^q-2}_{L^2}],
\end{align}
\begin{align}\label{eq: pri4b}
\E\big[
 \|f(\Pi_\mD u^{(n)})\|^4_{\mL(\mK,L^2)} 
 &\|\Delta^{(n+1)}W\|^4_\mK
\|\Pi_\mD u^{(n)}\|^{2^q-4}_{L^2}
\big]\nonumber\\
&\leq
\dtD^2(\text{Tr} \mQ)^2 
\E \big[(F_1\|\Pi_\mD u^{(n)}\|^2_{L^2} + F_2)^2\|\Pi_\mD u^{(n)}\|^{2^q-4}_{L^2}\big].
\end{align}
We note that last term on the right hand side of~\eqref{eq: pri8b} vanishes when taking expectation. Hence, summing~\eqref{eq: pri8b} from $n=0$ to $n=k$ 
(for an arbitrary $k=0,\ldots,N-1$), taking the expectations
and using~\eqref{eq: pri4}, the above estimates, and the discrete version of Gronwall lemma, we obtain
\begin{equation}\label{eq: pri5b}
\max_{1\leq n\leq N}\E\big[
\|\Pi_\mD u^{(n)}\|^{2^q}_{L^2}\big]
\leq  C_{f,a,T,\mQ,u_0,q}.
\end{equation}

By summing~\eqref{eq: pri8b} from $n=0$ to $n=k$ 
(for an arbitrary $k=0,\ldots,N-1$),
and  taking the maximum over $k$ and then applying $\E$,
we get
\begin{align}\label{eq: pri6b}
\E\big[{}& \max_{1\leq n\leq N}
\|\Pi_\mD u^{(n)}\|^{2^q}_{L^2}\big]
\leq
\|\Pi_\mD u^{(0)}\|^{2^q}_{L^2}\nonumber\\
&\quad+
20\E\Big[\sum_{n=0}^{N-1}
\|f(\Pi_\mD u^{(n)})\|^2_{\mL(\mK,L^2)}  \|\Delta^{(n+1)}W\|^2_\mK
\|\Pi_\mD u^{(n)}\|^{2^q-2}_{L^2}
\Big]\nonumber\\
&\quad+
16\E\Big[\sum_{n=0}^{N-1}
\|f(\Pi_\mD u^{(n)})\|^4_{\mL(\mK,L^2)}  \|\Delta^{(n+1)}W\|^4_\mK\|\Pi_\mD u^{(n)}\|^{2^q-4}_{L^2}
\Big]\nonumber\\
&\quad+
4\E\Big[\max_{0\leq k\leq N-1}\sum_{n=0}^{k}
\big\langle f(\Pi_\mD u^{(n)})\Delta^{(n+1)}W,\,  \Pi_\mD u^{(n)} \big\rangle_{L^2}
\|\Pi_\mD u^{(n)}\|^{2^q-2}_{L^2}
\Big].
\end{align} 
Proceeding as in~\eqref{eq: pri6}, the last term of the right hand side is estimated as follows
\begin{align*}
\E\Big[\max_{0\leq k\leq N-1}\sum_{n=0}^{k}
&\big\langle f(\Pi_\mD u^{(n)})\Delta^{(n+1)}W,\,  \Pi_\mD u^{(n)} \big\rangle_{L^2}
\|\Pi_\mD u^{(n)}\|^{2^q-2}_{L^2}
\Big]\\
&\leq 
\frac{1}{2^{q+1}}\E\big[\max_{0\leq n\leq N}\|\Pi_\mD u^{(n)}\|^{2^q}_{L^2}\big]
+CF_1T \max_{0\leq n\leq N}\E\big[
\|\Pi_\mD u^{(n)}\|^{2^q}_{L^2}\big]\nonumber\\
&\quad+ CF_2 T\max_{0\leq n\leq N}\E\big[
\|\Pi_\mD u^{(n)}\|^{2^q-2}_{L^2}\big].
\end{align*}
By using the above inequality,~\eqref{eq: pri3b}--\eqref{eq: pri5b} and~\eqref{eq: pri4}, we obtain from~\eqref{eq: pri6b} that 
\begin{equation}\label{eq: pri9b}
\E\Big[ \max_{1\leq n\leq N}
\|\Pi_\mD u^{(n)}\|^{2^q}_{L^2}\Big]
\leq 
 C_{f,a,T,\mQ,u_0,q},
\end{equation}
which completes the proof of the inductive step.
\medskip

\underline{A priori estimates on $\nabla_\mD u$ in \eqref{eq:apriori.moments}.} \\
By using Jensen's inequality, we obtain from~\eqref{eq: pri1} with $k=N-1$ that
\begin{align}\label{eq: pri10b}
\|\nabla_\mD u\|_{L^p(\Theta_T)}^{p2^{q-1}}
&\leq
C_q\|\Pi_\mD u^{(0)}\|^{2^q}_{L^2}\nonumber\\
&\quad+
C_q\left(\sum_{n=0}^{N-1}
\|f(\Pi_\mD u^{(n)})\|^2_{\mL(\mK,L^2)}\|\Delta^{(n+1)}W\|^2_{\mK}\right)^{2^{q-1}}\nonumber\\
&\quad+
C_q\left(
\sum_{n=0}^{N-1}
\big\langle f(\Pi_\mD u^{(n)})\Delta^{(n+1)}W,\,  \Pi_\mD u^{(n)} \big\rangle_{L^2}\right)^{2^{q-1}}.
\end{align}
We estimate the second term in the right hand side of~\eqref{eq: pri10b} by using, for $\gamma\ge 1$,
\begin{equation}\label{eq: pri11b}
\Big(\sum_{n=0}^{N-1} a_n\Big)^\gamma
\leq N^{\gamma-1} \sum_{n=0}^{N-1} a_n^\gamma,
\end{equation}
which can be proved using Jensen's inequality on the sum.
Applying the above inequality, arguments used in the proof of~\eqref{eq: pri3} and invoking~\eqref{eq: pri9b}, we have
\begin{align}\label{eq: pri12b}
\E\Big[
&\bigl(\sum_{n=0}^{N-1}
\|f(\Pi_\mD u^{(n)})\|^2_{\mL(\mK,L^2)}\|\Delta^{(n+1)}W\|^2_{\mK}\bigr)^{2^{q-1}}
\Big]\nonumber\\
&\leq 
C_{f,\mQ} N^{2^{q-1}-1} \dtD^{2^{q-1}}
\sum_{n=0}^{N-1} \E\big[\|\Pi_\mD u^{(n)}\|^{2^{q-1}}_{L^2} + \|\Pi_\mD u^{(n)}\|^{2^{q}}_{L^2}
\big]\nonumber\\
&\leq C_{f,a,T,\mQ,u_0,q},
\end{align}
where we have used the inequality $\E[(\|\Delta^{(n+1)}W\|^2_{\mK})^r]\le C_{\mQ,r}(\dtD)^r$ for all integer $r\ge 1$, see~\cite[Corollary 1.1]{Ichikawa1982}.
Proceeding as~\eqref{eq: pri6} and using~\eqref{eq: pri11b},~\eqref{eq: pri9b}, we estimate the third term in the right hand side of~\eqref{eq: pri10b}:
\begin{align*}
\E\big[
\bigl(
&\sum_{n=0}^{N-1}
\big\langle f(\Pi_\mD u^{(n)})\Delta^{(n+1)}W,\,  \Pi_\mD u^{(n)} \big\rangle_{L^2}\bigr)^{2^{q-1}}
\big]\\
&\leq 
C\E\big[\bigl(\sum_{n=0}^{N-1}\dtD\|f(\Pi_\mD u^{(n)})\|^2_{\mL(\mK,L^2)}\|\Pi_\mD u^{(n)}\|^2_{L^2}\bigr)^{2^{q-2}}
\big]\nonumber\\
&\leq 
C\E\big[\max_{0\leq n\leq N-1}\|\Pi_\mD u^{(n)}\|^{2^{q-1}}_{L^2}\bigl(\sum_{n=0}^{N-1} \dtD(F_1\|\Pi_\mD u^{(n)}\|^2_{L^2}+ F_2)\bigr)^{2^{q-2}}
\big]\nonumber\\
&\leq 
\frac14\E\big[\max_{0\leq n\leq N}\|\Pi_\mD u^{(n)}\|^{2^{q}}_{L^2}\big]
+ C^2 \dtD^{2^{q-1}}
\E\big[\bigl(
\sum_{n=0}^{N-1}F_1\|\Pi_\mD u^{(n)}\|^2_{L^2}
+ F_2\bigr)^{2^{q-1}} \big]\nonumber\\
&\leq 
\frac14\E\big[\max_{0\leq n\leq N}\|\Pi_\mD u^{(n)}\|^2_{L^2}\big]
+C^2F_1T \max_{1\leq n\leq N}\E\big[
\|\Pi_\mD u^{(n)}\|^{2^{q}}_{L^2}\big]
+ C^2F_2^{2^{q-1}} T \\
&\leq 
C_{f,a,T,\mQ,u_0,q},
\end{align*}
where we have used the Burkholder--Davis--Gundy inequality in the second line, a Young inequality in the fourth line, and \eqref{eq: pri11b} in the fifth line.
Together with~\eqref{eq: pri10b} and~\eqref{eq: pri12b}, this implies
\[
\E[\|\nabla_\mD u\|_{L^p(\Theta_T)}^{p2^{q-1}}]
\leq 
C_{f,a,T,\mQ,u_0,q},
\]
which completes the proof of this lemma.
\end{proof}
In order to estimate the time-translate of $\Pi_\mD u$, we will need the following relation.
\begin{lemma}\label{lem: priori2}
Let $u$ be a solution of the Algorithm~\ref{alg: GS}. Then, for all $\ell\in\{1,\ldots,N-1\}$,
\[
\E\big[\dtD\sum_{n=1}^{N-\ell}
\|\Pi_\mD u^{(n+\ell)} - \Pi_\mD u^{(n)}\|^2_{L^2}
\big]
\leq 
C_{f,T,\mQ,p,\|\Pi_\mD u^{(0)}\|_{L^2}}\,
t^{(\ell)}.
\]
\end{lemma}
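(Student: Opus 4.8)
The plan is to test the scheme~\eqref{eq: gdmscheme} on a block of $\ell$ consecutive time steps against the very increment to be estimated, and then split the resulting identity into a Leray--Lions (deterministic) part, controlled by the energy bound~\eqref{eq:apriori}, and a martingale part, controlled by a discrete It\^o isometry. Fix $\ell$ and $n\in\{1,\dots,N-\ell\}$, and set
\[
B_n:=\Pi_\mD u^{(n+\ell)}-\Pi_\mD u^{(n)}=\Pi_\mD\bigl(u^{(n+\ell)}-u^{(n)}\bigr),\qquad
M_n:=\sum_{j=n}^{n+\ell-1}f(\Pi_\mD u^{(j)})\Delta^{(j+1)}W .
\]
Summing~\eqref{eq: gdmscheme} over $j=n,\dots,n+\ell-1$ with the test function $\phi=u^{(n+\ell)}-u^{(n)}\in X_{\mD,0}$ (so that $\Pi_\mD\phi=B_n$ and $\nabla_\mD\phi=\nabla_\mD u^{(n+\ell)}-\nabla_\mD u^{(n)}$), and using that $\sum_{j=n}^{n+\ell-1}d_\mD^{(j+\frac12)}u$ telescopes to $B_n$, gives
\[
\|B_n\|_{L^2}^2
=-\dtD\sum_{j=n}^{n+\ell-1}\big\langle a(\Pi_\mD u^{(j+1)},\nabla_\mD u^{(j+1)}),\,\nabla_\mD u^{(n+\ell)}-\nabla_\mD u^{(n)}\big\rangle_{L^{p'},L^{p}}
+\big\langle M_n,B_n\big\rangle_{L^2}.
\]
A Young inequality $\langle M_n,B_n\rangle_{L^2}\le\frac12\|B_n\|_{L^2}^2+\frac12\|M_n\|_{L^2}^2$ absorbs the last term; multiplying by $\dtD$ and summing over $n$ then yields
\[
\dtD\sum_{n=1}^{N-\ell}\|B_n\|_{L^2}^2
\le 2\dtD^2\!\!\sum_{n=1}^{N-\ell}\sum_{j=n}^{n+\ell-1}\big|\big\langle a(\Pi_\mD u^{(j+1)},\nabla_\mD u^{(j+1)}),\,\nabla_\mD u^{(n+\ell)}-\nabla_\mD u^{(n)}\big\rangle_{L^{p'},L^{p}}\big|
+\dtD\sum_{n=1}^{N-\ell}\|M_n\|_{L^2}^2.
\]

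For the deterministic double sum, I would bound $\|a(\Pi_\mD u^{(j+1)},\nabla_\mD u^{(j+1)})\|_{L^{p'}}\le c_2\bigl(|\Theta|^{1/p'}+\|\nabla_\mD u^{(j+1)}\|_{L^p}^{p-1}\bigr)$ using~\eqref{eq: a2}, apply H\"older's inequality, and then Young's inequality with exponents $p'$ and $p$ (recall $(p-1)p'=p$) to each product $\|\nabla_\mD u^{(j+1)}\|_{L^p}^{p-1}\|\nabla_\mD u^{(k)}\|_{L^p}$ for $k\in\{n,n+\ell\}$. This reduces the term to a combination of sums of the form $\dtD^2\sum_{n}\sum_{j=n}^{n+\ell-1}\bigl(1+\|\nabla_\mD u^{(j+1)}\|_{L^p}^p+\|\nabla_\mD u^{(n)}\|_{L^p}^p+\|\nabla_\mD u^{(n+\ell)}\|_{L^p}^p\bigr)$. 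Each index $j$, and each index $n$, occurs in at most $\ell$ of the pairs $(n,j)$ with $n\le j\le n+\ell-1$; reorganising the double sum accordingly and using $\dtD\sum_{k}\|\nabla_\mD u^{(k)}\|_{L^p}^p=\|\nabla_\mD u\|_{L^p(\Theta_T)}^p$ bounds it by $C\,\ell\dtD\bigl(T+\|\nabla_\mD u\|_{L^p(\Theta_T)}^p\bigr)=C\,t^{(\ell)}\bigl(T+\|\nabla_\mD u\|_{L^p(\Theta_T)}^p\bigr)$. Taking expectations and invoking~\eqref{eq:apriori} bounds this contribution by $C\,t^{(\ell)}$.

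For the stochastic sum the key point is that $M_n$ is a sum of orthogonal martingale increments: if $n\le j<j'\le n+\ell-1$ then $f(\Pi_\mD u^{(j)})\Delta^{(j+1)}W$ and $f(\Pi_\mD u^{(j')})$ are $\mF_{t^{(j')}}$-measurable while $\Delta^{(j'+1)}W$ is centred and independent of $\mF_{t^{(j')}}$, so the cross terms vanish after taking expectation and, arguing exactly as in~\eqref{eq: pri3},
\[
\E\bigl[\|M_n\|_{L^2}^2\bigr]
=\sum_{j=n}^{n+\ell-1}\E\bigl[\|f(\Pi_\mD u^{(j)})\Delta^{(j+1)}W\|_{L^2}^2\bigr]
\le \dtD\,(\mathrm{Tr}\,\mQ)\sum_{j=n}^{n+\ell-1}\bigl(F_1\,\E[\|\Pi_\mD u^{(j)}\|_{L^2}^2]+F_2\bigr).
\]
It is essential to exploit this orthogonality rather than a crude Cauchy--Schwarz over the $\ell$-term sum, which would produce a spurious factor $\ell$ and destroy the estimate. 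By~\eqref{eq:apriori}, $\max_{1\le k\le N}\E[\|\Pi_\mD u^{(k)}\|_{L^2}^2]\le C$, hence $\E[\|M_n\|_{L^2}^2]\le C\,\ell\dtD=C\,t^{(\ell)}$, and summing, $\dtD\sum_{n=1}^{N-\ell}\E[\|M_n\|_{L^2}^2]\le C\,T\,t^{(\ell)}$. Combining the two contributions yields the claim, with a constant depending only on the data and on $\|\Pi_\mD u^{(0)}\|_{L^2}$ through~\eqref{eq:apriori}. The delicate points are purely the bookkeeping in the double sum (that each summation index is shared by at most $\ell$ pairs, which is exactly what converts $\dtD^2$ into $\ell\dtD=t^{(\ell)}$) and the systematic use of the martingale structure of $M_n$; the remainder is routine manipulation via H\"older's and Young's inequalities and the a priori bound~\eqref{eq:apriori}.
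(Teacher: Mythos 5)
Your proposal is correct and follows essentially the same route as the paper: sum the scheme over $\ell$ consecutive steps, test against the increment $u^{(n+\ell)}-u^{(n)}$, absorb the noise term by Young's inequality, bound the Leray--Lions double sum via \eqref{eq: a2} and the observation that each time index occurs in at most $\ell$ pairs, and bound the martingale part by the (discrete) It\^o isometry together with \eqref{eq:apriori}. The only cosmetic differences are that the paper applies H\"older across the sums where you apply Young pointwise, and writes $M_n$ as a stochastic integral rather than invoking orthogonality of increments directly; both yield the same $C\,t^{(\ell)}$ bound.
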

\begin{proof}
For any function $\phi\in X_{\mD,0}$, we deduce from~\eqref{eq: gdmscheme} that
\begin{align}\label{eq: disAscoli1}
\big\langle \Pi_\mD u^{(n+\ell)} - \Pi_\mD u^{(n)},\Pi_\mD \phi\big\rangle_{L^2}
&=
-\dtD\sum_{i=0}^{\ell-1} \big\langle a(\Pi_\mD u^{(n+i+1)},\nabla_\mD u^{(n+i+1)}),  \nabla_\mD \phi\big\rangle_{L^{p'},L^p}\nonumber\\
&\quad+
\sum_{i=0}^{\ell-1} \big\langle f(\Pi_\mD u^{(n+ i)})\Delta^{(n+i +1)}W,  \Pi_\mD \phi\big\rangle_{L^2}.
\end{align}
Choosing $\phi =  \dtD(u^{(n+\ell)} -  u^{(n)})$ and taking the sum over $n$ from $1$ to $N-\ell$, we have
\begin{align}\label{eq: disAscoli2}
\dtD\sum_{n=1}^{N-\ell}
&\|\Pi_\mD u^{(n+\ell)} - \Pi_\mD u^{(n)}\|^2_{L^2}\nonumber \\
&=
-\dtD^2\sum_{n=1}^{N-\ell}
\sum_{i=0}^{\ell-1} \big\langle a(\Pi_\mD u^{(n+i+1)},\nabla_\mD u^{(n+i+1)}),  \nabla_\mD (u^{(n+\ell)} -  u^{(n)})\big\rangle_{L^{p'},L^p}\nonumber\\
&\quad+\dtD\sum_{n=1}^{N-\ell}
\sum_{i=0}^{\ell-1} \big\langle f(\Pi_\mD u^{(n+ i)})\Delta^{(n+i +1)}W,  \Pi_\mD u^{(n+\ell)} - \Pi_\mD u^{(n)}\big\rangle_{L^2}\nonumber\\
&=: I_1 + I_2.
\end{align}
We now estimate the expectation of $I_1$ by using~\eqref{eq: a2}, H\"older inequality, and Lemma~\ref{lem: priori}.
\begin{align}\label{eq: I1}
\E[{}&I_1]
\leq 
c_2\E\Big[\dtD^2
\sum_{n=1}^{N-\ell}
\sum_{i=0}^{\ell-1}
\big\langle 1 + |\nabla_\mD u^{(n+i+1)}|^{p-1},  |\nabla_\mD (u^{(n+\ell)} -  u^{(n)})|\big\rangle_{L^2}
\Big]\nonumber\\
\leq{}&
Ct^{(\ell)}\E\bigl[\dtD
\sum_{n=1}^{N-\ell}\|\nabla_\mD (u^{(n+\ell)} -  u^{(n)})\|_{L^1}\bigr]\nonumber\\
&+C\E\Big[\dtD^2
\sum_{n=1}^{N-\ell}\|\nabla_\mD (u^{(n+\ell)} -  u^{(n)})\|_{L^p}
\sum_{i=0}^{\ell-1}
\|\nabla_\mD u^{(n+i+1)}\|_{L^p}^{p-1}
\Big]\nonumber\\
\leq {}&
Ct^{(\ell)}\E\bigl[\int_0^T\|\nabla_\mD u(t)\|_{L^1}\,dt\bigr]\nonumber\\
&+
C\E\Big[\dtD
\sum_{n=1}^{N-\ell}\|\nabla_\mD (u^{(n+\ell)} -  u^{(n)})\|_{L^p}
\int_{t^{(n)}}^{t^{(n+\ell)}}
\|\nabla_\mD u(t)\|_{L^p}^{p-1}\,dt
\Big]\nonumber\\
\leq {}&
Ct^{(\ell)}
\E\Big[\|\nabla_\mD u\|^p_{L^p(\Theta_T)}\Big]^{1/p}\nonumber\\
&+
C(t^{(\ell)})^{1/p}
\E\Big[
\dtD\sum_{n=1}^{N-\ell}\|\nabla_\mD (u^{(n+\ell)} -  u^{(n)})\|_{L^p}
\Big(\int_{t^{(n)}}^{t^{(n+\ell)}}
\|\nabla_\mD u(t)\|_{L^p}^p\,dt\Big)^{1/p'}
\Big].
\end{align}
The second term in the right hand side is estimated as follows:
\begin{align}
C(t^{(\ell)})^{1/p}{}&\E\Big[
\dtD\sum_{n=1}^{N-\ell}\|\nabla_\mD (u^{(n+\ell)} -  u^{(n)})\|_{L^p}
\Big(\int_{t^{(n)}}^{t^{(n+\ell)}}
\|\nabla_\mD u(t)\|_{L^p}^p\,dt\Big)^{1/p'}
\Big]\nonumber\\
&\leq 
 C(t^{(\ell)})^{1/p}
\E\Big[\dtD 
\Big(
\sum_{n=1}^{N-\ell}\|\nabla_\mD (u^{(n+\ell)} -  u^{(n)})\|_{L^p}^p
\Big)^{1/p}\nonumber\\
&\qquad\qquad\qquad\times\Big(\sum_{n=1}^{N-\ell}\int_{t^{(n)}}^{t^{(n+\ell)}}
\|\nabla_\mD u(t)\|_{L^p}^p\,dt
\Big)^{1/p'}
\Big]\nonumber\\
&\leq 
C(t^{(\ell)})^{1/p} (\dtD \ell)^{1/p'}
\E\bigl[\|\nabla_\mD u\|_{L^p(\Theta_T)}^p
\bigr],\label{eq: I1.1}
\end{align}
where the conclusion follows by noticing that, in the last sum of integrals term in the second line, each interval $[t^{(n)},t^{(n+1)}]$ appears at most $\ell$ times.

To estimate the expectation of $I_2$, we use the Young inequality and write
\begin{align}\label{eq: I2}
\E[I_2]
={}&
\dtD\sum_{n=1}^{N-\ell}
\E\Big[
 \big\langle \int_0^T \mId_{[t^{(n)},t^{(n+\ell)}]}(t)f(\Pi_\mD u(t))dW(t),  \Pi_\mD u^{(n+\ell)} - \Pi_\mD u^{(n)}\big\rangle_{L^2}
\Big]\nonumber\\
\leq{}&
\frac14 \dtD\sum_{n=1}^{N-\ell}
\E\bigl[
\|\Pi_\mD u^{(n+\ell)} - \Pi_\mD u^{(n)}\|^2_{L^2}
\bigr]\nonumber\\
&+
\dtD\sum_{n=1}^{N-\ell}
\E\Big[
\Big\Vert\int_0^T \mId_{[t^{(n)},t^{(n+\ell)}]}(t)f(\Pi_\mD u(t))dW(t)\Big\Vert^2_{L^2}
\Big]
\end{align}
By using the It\^o isometry,~\eqref{eq: f} and Lemma~\ref{lem: priori}, we bound the last term in the right hand side:
\begin{align*}
\E\Big[
\Big\Vert\int_0^T {}&\mId_{[t^{(n)},t^{(n+\ell)}]}(t)f(\Pi_\mD u(t))dW(t)\Big\Vert^2_{L^2}
\Big]\\
\leq{}&
(\text{Tr} \mQ) 
\E\Big[
\int_0^T \mId_{[t^{(n)},t^{(n+\ell)}]}(t)\|f(\Pi_\mD u(t))\|^2_{\mL(\mK,L^2)}\,dt
\Big]\\
\leq{}&
(\text{Tr} \mQ) 
\E\Big[
\int_0^T \mId_{[t^{(n)},t^{(n+\ell)}]}(t)(F_1\|\Pi_\mD u(t)\|_{L^2}^2+F_2)\,dt
\Big]\\
\leq{}&
(\text{Tr} \mQ)  t^{(\ell)}
\E\bigl[F_1\max_{1\leq n \leq N} \|\Pi_\mD u^{(n)}\|^2_{L^2}+F_2\bigr]\\
\leq{}&
C_{f,T,\mQ,p,\|\Pi_\mD u^{(0)}\|_{L^2}} t^{(\ell)}.
\end{align*} 
Together with~\eqref{eq: I2},~\eqref{eq: I1.1},~\eqref{eq: I1} and~\eqref{eq: disAscoli2}, this implies
\[\E\Big[
\dtD\sum_{n=1}^{N-\ell}
\|\Pi_\mD u^{(n+\ell)} - \Pi_\mD u^{(n)}\|^2_{L^2}
\Big]
\leq 
C_{f,T,\mQ,p,\|\Pi_\mD u^{(0)}\|_{L^2}} t^{(\ell)},
\]
which completes the proof of the lemma.
\end{proof}
\begin{remark}[Uniform time steps]\label{rem:unif.steps}
Lemma~\ref{lem: priori2} is restricted to uniform time steps, and is the only reason why we chose such
time steps in the scheme (Algorithm \ref{alg: GS}). Extending this lemma, and thus our convergence analysis,
to non-uniform time steps remains an open question. 
\end{remark}
\begin{remark} 
The result of Lemma~\ref{lem: priori2} will be used to obtain compactness-in-time of the approximate functions. The approach used here based on this estimate fills an apparent gap in~\cite{BanasBrzeet2014,BrzeCareProhl2013} where the result of~\cite[Lemma 4.4]{BanasBrzeet2014} (\cite[Lemma 3.2]{BrzeCareProhl2013}) is not sufficient for proving~\cite[Theorem 4.6]{BanasBrzeet2014} (\cite[Lemma 4.1]{BrzeCareProhl2013}, respectively).	
\end{remark}
We can now estimate the time-translate of $\Pi_\mD u$.
It follows from Lemmas~\ref{lem: emb2} and~\ref{lem: priori2}, and estimate \eqref{eq:translate.rho} that, for any $\rho\in(0,T)$,
\begin{equation}\label{eq: pri81}
\E\Big[\int_0^{T-\rho} \|\Pi_\mD u(t+\rho)-\Pi_\mD u(t)\|_{L^2}^2\, dt\Big] \le C \rho,
\end{equation}
and
\begin{equation}\label{eq: pri8}
\E\bigl[\|\Pi_\mD u\|_{H^{\beta}(0,T;L^2)}^2\bigr]\le C, \quad\text{for any }\beta\in(0,1/2).
\end{equation}

In the following lemma, we estimate the dual norm of the time variation of the iterates $\{\Pi_\mD u^{(n)}\}_{n=0}^N$. The dual norm $|\cdot|_{*,\mD}$ on $\Pi_{\mD}(X_{\mD,0})\subset L^2$ is defined by: for all $v\in \Pi_{\mD}(X_{\mD,0})$,
\[
|v|_{*,\mD}:= \sup\bigg\{
\int_\Omega v(\x)\,\Pi_\mD \phi(\x)d\x \,\,: \,\,
\phi\in X_{\mD,0}, \|\Pi_{\mD} \phi\|_{L^2} + \|\nabla_\mD \phi\|_{L^p} \leq 1
\bigg\}.
\]
\begin{lemma}\label{lem: dualnorm}
For any $q\in\N$ let $r = 2^q$ and $\alpha = \min\{1/2,1/p\}$. Then, for all $\ell=1,\ldots,N-1$,
\begin{equation}\label{eq:dualnorm.1}
\E\big[
|\Pi_\mD u^{(n+\ell)} - \Pi_\mD u^{(n)}|^r_{*,\mD}
\big]
\leq 
C_{f,T,\mQ,p,q,\|\Pi_\mD u^{(0)}\|_{L^2}}\,
(t^{(\ell)})^{\alpha r }.
\end{equation}
As a consequence, for any $t,s\in [0,T]$
\begin{equation}\label{eq:consequence}
\E\big[
|\Pi_\mD u(t) - \Pi_\mD u(s)|^r_{*,\mD}
\big]
\leq 
C_{f,T,\mQ,p,q,\|\Pi_\mD u^{(0)}\|_{L^2}}\,
\bigl(|t-s|+\dtD\bigr)^{\alpha r }.
\end{equation}
\end{lemma}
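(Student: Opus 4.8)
The plan is to derive \eqref{eq:dualnorm.1} directly from the scheme \eqref{eq: gdmscheme} by testing against the dual norm, and then to deduce \eqref{eq:consequence} by bookkeeping the piecewise-constant-in-time structure of $\Pi_\mD u$. First I would start from the identity \eqref{eq: disAscoli1}, which expresses $\langle \Pi_\mD u^{(n+\ell)}-\Pi_\mD u^{(n)},\Pi_\mD\phi\rangle_{L^2}$ as a sum of a ``diffusion'' contribution involving $a(\Pi_\mD u^{(\cdot)},\nabla_\mD u^{(\cdot)})$ against $\nabla_\mD\phi$, and a ``stochastic'' contribution $\sum_{i=0}^{\ell-1}\langle f(\Pi_\mD u^{(n+i)})\Delta^{(n+i+1)}W,\Pi_\mD\phi\rangle_{L^2}$. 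Taking the supremum over $\phi\in X_{\mD,0}$ with $\|\Pi_\mD\phi\|_{L^2}+\|\nabla_\mD\phi\|_{L^p}\le 1$, the diffusion term is bounded, using \eqref{eq: a2} and H\"older, by $C\dtD\sum_{i=0}^{\ell-1}(1+\|\nabla_\mD u^{(n+i+1)}\|_{L^p}^{p-1}) = C\int_{t^{(n)}}^{t^{(n+\ell)}}(1+\|\nabla_\mD u(t)\|_{L^p}^{p-1})\,dt$, which by H\"older in time is $\le C(t^{(\ell)})^{1/p}\bigl(\int_{t^{(n)}}^{t^{(n+\ell)}}(1+\|\nabla_\mD u(t)\|_{L^p}^p)\,dt\bigr)^{1/p'}$. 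The stochastic term is $\le \|\sum_{i=0}^{\ell-1}f(\Pi_\mD u^{(n+i)})\Delta^{(n+i+1)}W\|_{L^2}$, which is the $L^2$-norm of the It\^o integral $\int_0^T \mId_{[t^{(n)},t^{(n+\ell)}]}(t)f(\Pi_\mD u(t))\,dW(t)$.

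Raising to the power $r=2^q$ and taking expectations, I would treat the two contributions separately. For the diffusion part, the bound is $C(t^{(\ell)})^{r/p}\,\E\bigl[\bigl(\int_{t^{(n)}}^{t^{(n+\ell)}}(1+\|\nabla_\mD u(t)\|_{L^p}^p)\,dt\bigr)^{r/p'}\bigr]$; since $r/p'\le r\le$ something controlled and $\int_0^T(1+\|\nabla_\mD u\|_{L^p}^p)\,dt = T+\|\nabla_\mD u\|_{L^p(\Theta_T)}^p$, this is $\le C(t^{(\ell)})^{r/p}$ by the moment bound \eqref{eq:apriori.moments} (applied with a large enough $q$, after a Jensen step to pass from the $L^{p'}$-type power to the $p2^{q-1}$-type power available in \eqref{eq:apriori.moments}; if $r/p'$ exceeds that exponent one simply increases $q$ in the invoked estimate, or bounds $r/p'\le p2^{q-1}$ crudely). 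For the stochastic part, I would use the Burkholder--Davis--Gundy inequality for $L^2$-valued martingales (as in \eqref{eq: pri6}) to get $\E\bigl[\|\int_0^T\mId_{[t^{(n)},t^{(n+\ell)}]}f(\Pi_\mD u)\,dW\|_{L^2}^r\bigr]\le C\,\E\bigl[\bigl((\mathrm{Tr}\,\mQ)\int_{t^{(n)}}^{t^{(n+\ell)}}\|f(\Pi_\mD u(t))\|_{\mL(\mK,L^2)}^2\,dt\bigr)^{r/2}\bigr]$, then \eqref{eq: f} and H\"older in time give $\le C(t^{(\ell)})^{r/2}\,\E\bigl[\,\frac{1}{t^{(\ell)}}\int_{t^{(n)}}^{t^{(n+\ell)}}(F_1\|\Pi_\mD u(t)\|_{L^2}^2+F_2)^{r/2}\,dt\bigr]$, which is $\le C(t^{(\ell)})^{r/2}\,(1+\E[\max_n\|\Pi_\mD u^{(n)}\|_{L^2}^r])\le C(t^{(\ell)})^{r/2}$ again by \eqref{eq:apriori.moments}. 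Combining, the total is $\le C\bigl((t^{(\ell)})^{r/p}+(t^{(\ell)})^{r/2}\bigr)$, and since $t^{(\ell)}\le T$ the smaller power dominates, i.e.\ $\le C(t^{(\ell)})^{\min\{1/2,1/p\}r}=C(t^{(\ell)})^{\alpha r}$, proving \eqref{eq:dualnorm.1}.

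For the consequence \eqref{eq:consequence}, fix $t,s\in[0,T]$ with, say, $s\le t$, and let $n,k$ be the indices with $t\in(t^{(n)},t^{(n+1)}]$ and $s\in(t^{(k)},t^{(k+1)}]$ (with the convention at $0$), so that $\Pi_\mD u(t)=\Pi_\mD u^{(n+1)}$ and $\Pi_\mD u(s)=\Pi_\mD u^{(k+1)}$, and $\ell:=(n+1)-(k+1)=n-k\ge 0$ satisfies $t^{(\ell)}=\ell\,\dtD\le |t-s|+\dtD$. If $\ell\ge 1$, \eqref{eq:dualnorm.1} gives the bound $C(t^{(\ell)})^{\alpha r}\le C(|t-s|+\dtD)^{\alpha r}$; if $\ell=0$ the difference is zero and the bound is trivial. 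This yields \eqref{eq:consequence}. The main obstacle I anticipate is purely bookkeeping rather than conceptual: making sure the time-integral H\"older steps produce exactly the power $(t^{(\ell)})^{\alpha r}$ with no leftover $\dtD$-dependence, and verifying that the exponent $r/p'$ (resp.\ $r/2$) appearing after the H\"older inequality is genuinely covered by the available higher-moment estimate \eqref{eq:apriori.moments} --- which may require invoking that estimate with an index $q'$ larger than the $q$ in the statement, harmlessly absorbing the extra constant into $C_{f,T,\mQ,p,q,\|\Pi_\mD u^{(0)}\|_{L^2}}$. Everything else (BDG for $L^2$-valued martingales, It\^o isometry, \eqref{eq: a2}, \eqref{eq: f}) is already used verbatim in Lemmas~\ref{lem: priori} and~\ref{lem: priori2}.
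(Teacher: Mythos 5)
Your proposal is correct and follows essentially the same route as the paper: it starts from the telescoped identity \eqref{eq: disAscoli1}, bounds the diffusion contribution via \eqref{eq: a2} and H\"older in time to get the power $(t^{(\ell)})^{r/p}$ (absorbing the $r/p'$-th moment of the gradient into \eqref{eq:apriori.moments} exactly as the paper does with a H\"older step on the expectation), bounds the stochastic contribution via Burkholder--Davis--Gundy and \eqref{eq: f} to get $(t^{(\ell)})^{r/2}$, and deduces \eqref{eq:consequence} from the same piecewise-constant bookkeeping with $t^{(\ell)}\le |t-s|+\dtD$.
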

\begin{proof}
It follows from~\eqref{eq: disAscoli1} that
\begin{align}
\E\big[{}&
|\Pi_\mD u^{(n+\ell)} - \Pi_\mD u^{(n)}|^r_{*,\mD}
\big]\nonumber\\
&\leq 
2^{r-1}
\dtD^{r}
\E\Big[ 
\Big(\sup_{\phi\in \mA}\sum_{i=0}^{\ell-1} \big\langle a(\Pi_\mD u^{(n+i+1)},\nabla_\mD u^{(n+i+1)}),  \nabla_\mD \phi\big\rangle_{L^{p'},L^p}\Big)^r
\Big]\nonumber\\
&\quad
+2^{r-1}\E\Big[
\Big(\sup_{\phi\in \mA}\sum_{i=0}^{\ell-1} \big\langle f(\Pi_\mD u^{(n+ i)})\Delta^{(n+i +1)}W,  \Pi_\mD \phi\big\rangle_{L^2}\Big)^r
\Big]\nonumber\\
&=:I_1 + I_2,
\label{eq: dualnorm0}
\end{align}
where we have set $\mA:=\{ \phi\in X_{\mD,0}, \|\Pi_{\mD} \phi\|_{L^2} + \|\nabla_\mD \phi\|_{L^p} \leq 1\}$.
We estimate the first term $I_1$ by using~\eqref{eq: a2} and Lemma~\ref{lem: priori}.
\begin{align}\label{eq: dualnorm1}
I_1 
&\leq 
C\dtD^{r}
\E\Big[\sup_{\phi\in \mA}
\Big(
\sum_{i=0}^{\ell-1}
\big\langle 1 + |\nabla_\mD u^{(n+i+1)}|^{p-1},  |\nabla_\mD \phi|\big\rangle_{L^2}
\Big)^r
\Big]\nonumber\\
&\leq 
C(t^{(\ell)})^r 
\E\bigl[\sup_{\phi\in \mA} \|\nabla_\mD \phi\|_{L^1}^r\bigr]
+
C\dtD^{r}
\E\Big[\sup_{\phi\in \mA}\|\nabla_\mD \phi\|_{L^p}^r\bigl(
\sum_{i=0}^{\ell-1}
\|\nabla_\mD u^{(n+i+1)}\|_{L^p}^{p-1}\Big)^r
\Big]\nonumber\\
&\leq 
C(t^{(\ell)})^r
+
C(t^{(\ell)})^{r/p}
\E\Big[\Big(
\int_{t^{(n)}}^{t^{(n+\ell)}}
\|\nabla_\mD u(t)\|_{L^p}^{p}\,dt\Big)^{r/p'}
\Big]\nonumber\\
&\leq 
C(t^{(\ell)})^r
+
C(t^{(\ell)})^{r/p}
\E\bigl[
\|\nabla_\mD u\|_{L^p(\Theta_T)}^{(p-1)r}
\bigr]\nonumber\\
&\leq 
C(t^{(\ell)})^r
+
C(t^{(\ell)})^{r/p}
\bigl(\E\bigl[
\|\nabla_\mD u\|_{L^p(\Theta_T)}^{pr}
\bigr]\bigr)^{1/p'}
\leq C(t^{(\ell)})^{r/p}.
\end{align}

The last term $I_2$ is estimated by using the Burkholder--Davis--Gundy inequality,~\eqref{eq: f} and Lemma~\ref{lem: priori}.
\begin{align}\label{eq: dualnorm2}
I_2
&\leq 
C
\E\bigl[
\|\int_0^T \mId_{[t^{(n)},t^{(n+\ell)}]}(t)f(\Pi_\mD u(t))dW(t)\|^r_{L^2}
\bigr]\nonumber\\
&\leq 
C
\E\Big[
\Big(
\int_0^T \mId_{[t^{(n)},t^{(n+\ell)}]}(t)(F_1\|\Pi_\mD u(t)\|_{L^2}^2+F_2)\,dt
\Big)^{r/2}
\Big]\nonumber\\
&\leq 
C (t^{(\ell)})^{r/2} \E\bigl[F_1^{r/2}\max_{1\leq n\leq N}\|\Pi_\mD u^{(n)}\|^r_{L^2}+F_2^{r/2}\bigr]
\leq C (t^{(\ell)})^{r/2}.
\end{align} 
The estimate \eqref{eq:dualnorm.1} follows from~\eqref{eq: dualnorm0}--\eqref{eq: dualnorm2}.
The bound \eqref{eq:consequence} follows by noticing that, if $t<s\in [0,T]$ and $n\le r$ are such that $t\in (t^{(n)},t^{(n+1)}]$ and $s\in (t^{(r)},t^{(r+1)}]$, then $t^{(r-n)}\le |s-t|+\dtD$.
\end{proof}

For any $t\in [0,T]$, there exists $n\in \{0,\cdots,N-1\}$ such that $t\in (t^{(n)},t^{(n+1)}]$. Using this notation, we define 
\[
M_{\mD}(t):=M_{\mD}^{(n)}:=\sum_{i=0}^n  f(\Pi_\mD u^{(i)})\Delta^{(i+1)}W.
\]
The term $f(\Pi_\mD u^{(i)})\Delta^{(i+1)}W$ corresponds to the noise term added at each time step of the GS. The following lemma shows that $M_{\mD}$ is bounded in various norms.
\begin{lemma}\label{lem: martingale}
For any  $\beta\in (0,1/2)$, for any $r = 2^q$ with $q\in\N$, there exists $C\ge 0$ such that
\begin{equation}\label{eq:martingale}
\E\bigl[\|M_\mD\|_{H^\beta(0,T;L^2)}^2\bigr] \le C\quad\mbox{ and }\quad
\E\bigl[\|M_\mD\|^r_{L^\infty(0,T;L^2)}\bigr] \le C.
\end{equation}
\end{lemma}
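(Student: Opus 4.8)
The plan is to bound the two norms of $M_\mD$ separately, each time rewriting the discrete sum as a genuine stochastic integral of a piecewise-constant-in-time integrand and then invoking an Itô-type estimate together with the a priori bounds from Lemma~\ref{lem: priori}. Concretely, observe that for $t\in(t^{(n)},t^{(n+1)}]$ we have
\[
M_\mD(t)=\int_0^{t^{(n+1)}} g_\mD(s)\,dW(s),\qquad
g_\mD(s):=\sum_{i=0}^{N-1}\mId_{(t^{(i)},t^{(i+1)}]}(s)\,f(\Pi_\mD u^{(i)}),
\]
where $g_\mD$ is adapted (because $\Pi_\mD u^{(i)}$ is $\mF_{t^{(i)}}$-measurable) and, by the growth assumption~\eqref{eq: f} and Lemma~\ref{lem: priori}, satisfies
$\E[\int_0^T\|g_\mD(s)\|^r_{\mL(\mK,L^2)}\,ds]\le C$ for every $r=2^q$, uniformly in $\mD$.

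For the second bound in~\eqref{eq:martingale}, apply the Burkholder--Davis--Gundy inequality (in the $L^2(\Theta)$-valued form, \cite[Theorem 2.4]{Zdzis1997}) to get
\[
\E\bigl[\|M_\mD\|^r_{L^\infty(0,T;L^2)}\bigr]
\le C\,\E\Bigl[\Bigl(\int_0^T (\mathrm{Tr}\,\mQ)\,\|g_\mD(s)\|^2_{\mL(\mK,L^2)}\,ds\Bigr)^{r/2}\Bigr]
\le C\,\E\Bigl[\Bigl(\int_0^T F_1\|\Pi_\mD u(s)\|_{L^2}^2+F_2\,ds\Bigr)^{r/2}\Bigr],
\]
and then bound the last integral by $T^{r/2}(F_1\max_{1\le n\le N}\|\Pi_\mD u^{(n)}\|_{L^2}^2+F_2)^{r/2}$ and use the moment estimate~\eqref{eq:apriori.moments}; this is exactly the computation already performed in~\eqref{eq: dualnorm2}.

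For the fractional Sobolev bound, recall the characterisation
$\|v\|_{H^\beta(0,T;L^2)}^2\simeq \|v\|_{L^2(0,T;L^2)}^2+\int_0^T\int_0^T\frac{\|v(t)-v(s)\|_{L^2}^2}{|t-s|^{1+2\beta}}\,dt\,ds$.
The $L^2(0,T;L^2)$ part is controlled by the $L^\infty$-in-time bound just obtained. For the Gagliardo seminorm, it suffices to show $\E[\|M_\mD(t)-M_\mD(s)\|_{L^2}^2]\le C(|t-s|+\dtD)$ for all $t,s$, after which Fubini and $\int_0^T\int_0^T(|t-s|+\dtD)|t-s|^{-1-2\beta}\,dt\,ds<\infty$ for $\beta<1/2$ (splitting the region $|t-s|\le\dtD$, where the integrand is $\lesssim \dtD^{-2\beta}$ over an area $\lesssim T\dtD$, from $|t-s|>\dtD$) finishes the estimate. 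To prove the increment bound, write $M_\mD(t)-M_\mD(s)=\int_0^T \mId_{(t^{(m)},t^{(r)}]}(\tau)g_\mD(\tau)\,dW(\tau)$ for the appropriate time-step indices $m\le r$, apply the Itô isometry, use~\eqref{eq: f} and Lemma~\ref{lem: priori} to get a bound by $(\mathrm{Tr}\,\mQ)(F_1\E[\max_n\|\Pi_\mD u^{(n)}\|_{L^2}^2]+F_2)\,t^{(r-m)}$, and note $t^{(r-m)}\le |t-s|+\dtD$ as in the end of the proof of Lemma~\ref{lem: dualnorm}.

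The main obstacle is bookkeeping rather than conceptual: one must be careful that the constant $C$ in~\eqref{eq:martingale} is genuinely independent of $\mD$ (hence of $N$ and $\dtD$), which forces all estimates to go through the mesh-independent a priori bounds~\eqref{eq:apriori}--\eqref{eq:apriori.moments} and the mesh-independent constants $F_1,F_2,\mathrm{Tr}\,\mQ$; and one must handle the $+\dtD$ correction in the increment estimate so that the Gagliardo double integral still converges for every $\beta<1/2$ uniformly in $\dtD$. The extra $\dtD$ is harmless precisely because the contribution of the region $\{|t-s|\lesssim\dtD\}$ to the double integral is $O(\dtD^{1-2\beta})\to 0$.
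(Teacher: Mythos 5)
Your treatment of the second bound in \eqref{eq:martingale} is correct and is exactly the paper's route: rewrite $M_\mD$ as a stochastic integral of the adapted piecewise-constant integrand, apply Burkholder--Davis--Gundy, and close with the moment estimate \eqref{eq:apriori.moments}. The increment estimate $\E\bigl[\|M_\mD(t)-M_\mD(s)\|_{L^2}^2\bigr]\le C\,t^{(r-m)}\le C(|t-s|+\dtD)$ via the It\^o isometry is also correct.

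However, your derivation of the $H^\beta$ bound from that increment estimate has a genuine gap: the double integral you invoke is infinite. On the region $|t-s|\le\dtD$ the integrand $(|t-s|+\dtD)\,|t-s|^{-1-2\beta}$ is \emph{not} bounded by $\dtD^{-2\beta}$; it behaves like $\dtD\,|t-s|^{-1-2\beta}$, and $\int_{0}^{\dtD} h^{-1-2\beta}\,dh=+\infty$ for every $\beta>0$. The problem is that the pointwise bound $C(|t-s|+\dtD)$ discards the fact that $M_\mD$ is piecewise constant, so that $M_\mD(t)-M_\mD(s)=0$ whenever $t$ and $s$ lie in the same time subinterval; without that cancellation the Gagliardo seminorm cannot be controlled near the diagonal. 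The paper avoids this by working with the \emph{integrated} time-translate: for a piecewise-constant function, the set of $t$ where $g(t+\rho)\neq g(t)$ has measure proportional to $\rho$ (not to $\dtD$), which is what gives $\int_0^{T-\rho}\E\bigl[\|M_\mD(t+\rho)-M_\mD(t)\|_{L^2}^2\bigr]dt\le C\rho$ even for $\rho\le\dtD$ (case (i) of Lemma~\ref{lem: emb}, fed by \eqref{eq: M1} with $r=2$), and then Lemma~\ref{lem: emb2} converts this translate estimate into the $H^\beta(0,T;L^2)$ bound. Your argument can be repaired along the same lines --- either by routing through Lemmas~\ref{lem: emb}--\ref{lem: emb2}, or by computing the Gagliardo double integral blockwise over pairs of subintervals and using that the diagonal blocks contribute zero --- but as written the key inequality fails.
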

\begin{proof}
It follows, in a similar way as~\eqref{eq: dualnorm2}, that  
\begin{equation}\label{eq: M1}
\E\bigl[ \|M_{\mD}^{(n+\ell)} - M_{\mD}^{(n)}\|_{L^2} ^r\bigr]
\leq C (t^{(\ell)})^{r/2}.
\end{equation}
Together with Lemma~\ref{lem: emb2}, this implies the first estimate. The second estimate follows from
the uniform bound of $\E\bigl[\|\Pi_\mD u\|^r_{L^\infty(0,T;L^2)}\bigr]$ and the Burkholder--Davis--Gundy.
\end{proof}

\section{Tightness and construction of new probability space and processes}\label{sec:tight}
In this section, we show that the sequence $\bigl\{\bigl(\Pi_{\mD_m} u_m, \nabla_{\mD_m} u_m, M_{\mD_m},W\bigr)\bigr\}_{m\in\N}$ is tight. 
To prove the tightness of $M_{\mD_m}$, we introduce the following space.
For any $r\geq 2$, let us consider
\begin{align*}
L^r(0,T;L^2_\mathrm{w})
:= &\ \text{the space of $r$-integrable functions } v:[0,T]\rightarrow L^2, \text{ endowed} \\
&\ \text{with the weakest topology such that, for all $\phi\in L^2$, the mapping}\\
&\ v\in L^r(0,T;L^2_\mathrm{w})\mapsto L^r(0,T;\R)\ni \langle v(\cdot),  \phi\rangle_{L^2} \text{ is continuous}.
\end{align*}
In particular, $v_n\rightarrow v$ in $L^r(0,T;L^2_\mathrm{w})$ if and only if for all $\phi\in L^2$:
\[
\langle v_n(\cdot),  \phi\rangle_{L^2}\rightarrow \langle v(\cdot),  \phi\rangle_{L^2}
\quad \text{in } L^r(0,T;\R).
\]
Let $\{\phi_i\,:\,i\in\N\}\subset C_c^\infty(\Theta)$ be a dense countable subset in $L^{2}$ and equip the ball $B$ of radius $C_B$ in $L^{2}$ with the following metric
\[
d_{L^2_{\mathrm{w}}}(v,w) = \sum_{i\in\N} \frac{\text{min}(1,| \langle v-w,\phi_i\rangle_{L^{2}}|)}{2^i}\quad \text{for }v,w\in B.
\]
It is easily checked that bounded sets in $L^\infty(0,T;L^2)$ are metrisable for the topology of $L^r(0,T;L^2_{\mathrm{w}})$, with metric
\[
d_{L^r(L^2_{\mathrm{w}})}(v,w) := \left(\int_0^T d_{L^2_{\mathrm{w}}}(v(s),w(s))^r\,ds\right)^{1/r}.
\]
To prove the tightness of $\Pi_{\mD_m} u_m$, we define the following norm on $X_{\mD_m}^{N_m+1}$: for any $v_m\in X_{\mD_m}^{N_m+1}$
\[
\|v_m\|_{\mD_m}:= \|\nabla_{\mD_m} v_m\|_{L^p(\Theta_T)} + 
\|\Pi_{\mD_m} v_m\|_{H^\beta(0,T,L^2)}. 
\]
By Lemma~\ref{lem: priori} and Estimate~\eqref{eq: pri8}, we have
\[
\E\bigl[
\|u_m\|_{\mD_m}^q
\bigr] \le C,\quad\text{with } q=\min (2,p).
\]
Since the norm $\|{\cdot}\|_{\mD_m}$ changes with $m$, we need to use Lemma~\ref{lem: tigh0} to establish the tightness of $\{\Pi_{\mD_m}u_m\}_{m\in\N}$.

We now define the space $\mathcal E$ 
 \[
\mathcal E:= L^p(0,T;L^p)\times \bigl(L^p(0,T;L^p)^d\bigr)_\mathrm{w} \times L^r(0,T;L^2_\mathrm{w})\times C([0,T]; L^2),
 \]
 where $\bigl(L^p(0,T;L^p)\bigr)_\mathrm{w}$ is the space $L^p(0,T;L^p)$ endowed with the weak topology.
 The sequence $\bigl\{\bigl(\Pi_{\mD_m} u_m, \nabla_{\mD_m} u_m, M_{\mD_m},W\bigr)\bigr\}_{m\in\N}$ is proved to be tight in the following lemma.
\begin{lemma}\label{lem: tight}
The measures of law of $\big\{\bigl(\Pi_{\mD_m} u_m, \nabla_{\mD_m} u_m, M_{\mD_m},W\bigr)\big\}_{n\in\N}$ on $\mathcal E$  are tight.
\end{lemma}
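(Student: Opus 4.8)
The plan is to establish tightness of each of the four marginals separately and then invoke the fact that a product of tight families is tight (equivalently, that the joint law is inner-regular since each coordinate is). For the last coordinate, the law of $W$ is a single fixed Gaussian measure on $C([0,T];L^2)$ (or on the relevant space in which $W$ lives), hence automatically tight by Prokhorov's theorem applied to a single Radon measure on a Polish space. The remaining work concentrates on the three discrete objects $\Pi_{\mD_m}u_m$, $\nabla_{\mD_m}u_m$, and $M_{\mD_m}$, for which I would use the a priori bounds from Lemmas~\ref{lem: priori}, \ref{lem: priori2}, \ref{lem: dualnorm}, \ref{lem: martingale} together with the compactness embeddings recalled earlier in the paper.

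First I would handle $\nabla_{\mD_m}u_m$ in $\bigl(L^p(0,T;L^p)^d\bigr)_\mathrm{w}$: by Lemma~\ref{lem: priori} we have $\E\|\nabla_{\mD_m}u_m\|_{L^p(\Theta_T)}^p\le C$ uniformly in $m$, so by Chebyshev the family spends all but $\varepsilon$ of its mass inside a fixed ball of $L^p(0,T;L^p)^d$; since $L^p(0,T;L^p)^d$ is reflexive, closed balls are weakly compact and metrisable (the space is separable), giving tightness in the weak topology. For $M_{\mD_m}$ in $L^r(0,T;L^2_\mathrm{w})$, the same Chebyshev argument applied to the bound $\E\|M_{\mD_m}\|_{L^\infty(0,T;L^2)}^r\le C$ from Lemma~\ref{lem: martingale} confines the mass to a ball of $L^\infty(0,T;L^2)$; on such balls the $L^r(0,T;L^2_\mathrm{w})$ topology is metrisable (as observed in the paragraph preceding the lemma), and the additional fractional-in-time bound $\E\|M_{\mD_m}\|_{H^\beta(0,T;L^2)}^2\le C$ provides the equicontinuity needed to pass from the ball (which is merely bounded, not compact, for the weak-$L^2$ topology) to a genuinely compact set: a set bounded in $L^\infty(0,T;L^2)\cap H^\beta(0,T;L^2)$ is relatively compact in $L^r(0,T;L^2_\mathrm w)$ by an Aubin--Lions / Arzel\`a--Ascoli argument using that bounded balls of $L^2$ are weakly compact.

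The most delicate marginal is $\Pi_{\mD_m}u_m$ in $L^p(0,T;L^p)$ with the \emph{strong} topology, and this is where I expect the main obstacle to lie, because the relevant compact sets must control both space translates (via the compactness property of the GDs, i.e. $\sup_m T_{\mD_m}(\bxi)\to 0$ as $\bxi\to 0$, combined with the uniform bound on $\|\nabla_{\mD_m}u_m\|_{L^p}$) \emph{and} time translates (via Lemma~\ref{lem: priori2}, or equivalently the $H^\beta(0,T;L^2)$ bound \eqref{eq: pri8}), and moreover the ambient norm $\|\cdot\|_{\mD_m}$ changes with $m$. This is precisely why the statement says one must use Lemma~\ref{lem: tigh0}: that lemma is the abstract tightness criterion adapted to sequences of discrete norms, and I would feed into it the uniform moment bound $\E[\|u_m\|_{\mD_m}^q]\le C$ with $q=\min(2,p)$ together with the uniform space-translation estimate coming from the compactness of the sequence $(\mD_m)_{m\in\N}$. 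Concretely, for fixed $\varepsilon>0$ one exhibits a compact subset $K_\varepsilon\subset L^p(0,T;L^p)$ — built as an intersection over small $\bxi$ and small $\rho$ of sets of functions whose space-translates and time-translates are uniformly small, intersected with a ball — and checks via Chebyshev and the said estimates that $\mP\{\Pi_{\mD_m}u_m\notin K_\varepsilon\}<\varepsilon$ for all $m$; the Kolmogorov--Riesz (Fr\'echet--Kolmogorov) theorem then guarantees $K_\varepsilon$ is compact.

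Finally, since tightness of each of the four families of marginal laws has been established on the respective Polish (or metrisable, in the ball) spaces, the family of joint laws on $\mathcal E$ is tight: for $\varepsilon>0$ choose compact sets $K_\varepsilon^{(1)},\dots,K_\varepsilon^{(4)}$ in the four factors with complement probability at most $\varepsilon/4$ for each marginal and each $m$, and then $K_\varepsilon^{(1)}\times K_\varepsilon^{(2)}\times K_\varepsilon^{(3)}\times K_\varepsilon^{(4)}$ is a compact subset of $\mathcal E$ whose complement has probability at most $\varepsilon$ under every $\big\{\bigl(\Pi_{\mD_m} u_m, \nabla_{\mD_m} u_m, M_{\mD_m},W\bigr)\big\}$. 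This proves the lemma.
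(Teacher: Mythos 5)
Your overall strategy is the same as the paper's: treat each factor of $\mathcal E$ separately (the law of $W$ is a single Radon measure, hence tight; Chebyshev plus weak compactness of balls in the reflexive space $L^p(0,T;L^p)^d$ handles $\nabla_{\mD_m}u_m$; the bounds of Lemma~\ref{lem: martingale} combined with the compact embedding of $H^\beta(0,T;L^2)\cap L^\infty(0,T;L^2)$ into $L^r(0,T;L^2_{\mathrm{w}})$ — which is exactly Lemma~\ref{lem: comp} — handle $M_{\mD_m}$; and Lemma~\ref{lem: tigh0} together with the $m$-dependent norms $\|\cdot\|_{\mD_m}$, the space-translate control $T_{\mD_m}(\bxi)$ and the time-translate estimate of Lemma~\ref{lem: priori2} handle $\Pi_{\mD_m}u_m$), followed by the standard observation that a product of compacts absorbs all but $\varepsilon$ of the joint mass.

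There is, however, one concrete gap in your treatment of the strong-$L^p(0,T;L^p)$ factor. You propose to conclude compactness of $K_\varepsilon$ directly from the Kolmogorov--Riesz theorem in $L^p(\Theta_T)$, but that theorem requires smallness of \emph{space-time} translates in the $L^p(\Theta_T)$ norm, whereas the available time-translate estimate (Lemma~\ref{lem: priori2}, i.e.\ \eqref{eq: pri81}) is only in $L^2(0,T;L^2)$. When $p\le 2$ this is harmless on the bounded domain $\Theta_T$, but for $p>2$ an $L^2$ time-translate bound does not yield an $L^p$ one, and your argument as stated does not close. The paper's proof circumvents this by first obtaining relative compactness of the sets $\mathfrak{K}(C)$ only in $L^1(0,T;L^1)$ (via \cite[Proposition C.5]{Droniou.et.al2018} and Lemma~\ref{lem: tigh0}), and then upgrading to $L^p(0,T;L^p)$ by interpolation: the discrete Sobolev embeddings (Definition~\ref{def:sobo}) give a uniform bound in $L^p(0,T;L^{p^*})$ with $p^*>p$, which combined with the $L^\infty(0,T;L^2)$ bound yields boundedness in $L^{\bar p}(0,T;L^{\bar p})$ for some $\bar p>p$; relative compactness in $L^1$ plus boundedness in $L^{\bar p}$ with $\bar p>p$ then gives relative compactness in $L^p$. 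Note that the discrete Sobolev embeddings are a standing hypothesis of Theorem~\ref{theo:main} precisely because of this step, and they do not appear anywhere in your argument — that is the missing ingredient you would need to add.
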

\begin{proof}
Let us first establish a (deterministic) compactness result. Consider, for a fixed constant $C$, the sets
\begin{align*}
K_m(C):= \bigl\{ v\in \Pi_{\mD_m} &X_{\mD_m,0}\,:\,\exists 
w_m\in X_{\mD_m,0}\,\text{ satisfying }
 \Pi_{\mD_m} w_m = v, \;
\|w_m\|_{\mD_m}\leq C\\
\text{ and }\quad
&\int_0^{T-\rho} \|v(t+\rho)-v(t)\|_{L^2}^2\, dt\le C \rho,\quad  \forall\rho\in(0,T)
\bigr\}
\end{align*}
and define
\[
\mathfrak{K}(C)=\left(\bigcup_{m\in\N} K_m(C)\right)\cap \{v\in L^\infty(0,T;L^2)\,:\,\|v\|_{L^\infty(0,T;L^2)}\le C\}.
\]
Each $K_m(C)$ is relatively compact in $L^1(0,T;L^1)$ since it is bounded in the finite-dimensional space $\Pi_{\mD_m}X_{\mD_m,0}$. Moreover, by the compactness of $(\mD_m)_{m\in\N}$ (Definition \ref{def:GD.comp}),~\cite[Proposition C.5]{Droniou.et.al2018} shows that any sequence $\{v_m\}_{m\in\N}$ satisfying $v_m\in K_m(C)$ for any $m\in\N$ is relatively compact in $L^1(0,T;L^1)$. Hence, Lemma \ref{lem: tigh0} shows that $\bigcup_{m\in\N} K_m(C)$, and thus $\mathfrak{K}(C)$ is relatively compact in $L^1(0,T;L^1)$. The bound on $\|w_m\|_{\mD_m}$ stated in $K_m(C)$ and the discrete Sobolev embeddings (Definition \ref{def:sobo}) ensure that $\mathfrak{K}(C)$ is bounded in $L^p(0,T;L^{p^*})$ for $p^*>p$. Together with the bound in $L^\infty(0,T;L^2)$ and standard interpolation results, this proves that $\mathfrak{K}(C)$ is bounded in $L^{\bar p}(0,T;L^{\bar p})$ for some $\bar{p}>p$. Using again interpolation inequality, this proves that the relative compactness of $\mathfrak{K}(C)$ not only holds in $L^1(0,T;L^1)$, but also in $L^p(0,T;L^p)$.

This compactness of $\mathfrak{K}(C)$, Lemma~\ref{lem: comp} and the bounds on $\{\Pi_{\mD_m} u_m\}_{m\in\N}$,
$\{\nabla_{\mD_m} u_m\}_{m\in\N}$ and $\{M_{\mD_m}\}_{m\in\N}$ stated in Lemma~\ref{lem: priori},~\eqref{eq: pri81},~\eqref{eq: pri8} and Lemma~\ref{lem: martingale} imply the tightness law of $\big\{\bigl(\Pi_{\mD_m} u_m, \nabla_{\mD_m} u_m, M_{\mD_m},W\bigr)\big\}_{m\in\N}$ in $\mathcal E$. 
\end{proof}

By using Jakubowski's version of the Skorohod theorem~\cite[Theorem 2]{jakubowski1998}, we show the almost sure convergence of $\big\{\bigl(\Pi_{\mD_m} u_m, \nabla_{\mD_m} u_m, M_{\mD_m},W\bigr)\big\}_{m\in\N}$, up to a change of probability space, in the following lemma.
\begin{lemma}
There exists a new probability space $(\overline{\Omega},\overline{\mF},\overline{\F},\overline{\mP})$, a sequence of random variables $\bigl(\widetilde{u}_m,\overline{M}_m,\overline{W}_m\bigr)_{m\in\N}$ and random variables $(\overline{u},\overline{M},\overline{W})$ on this space  such that
\begin{itemize}
\item  $\widetilde{u}_m\in X_{\mD_m,0}$ for each $m\in\N$,
\item  $\bigl(\Pi_{\mD_m}\widetilde{u}_m,\nabla_{\mD_m}\widetilde{u}_m,\overline{M}_m,\overline{W}_m\bigr)$ takes its values in space $\mathcal E$
  with the same laws, for each $m\in \N$, as $\bigl(\Pi_{\mD_m} u_m, \nabla_{\mD_m} u_m, M_{\mD_m},W\bigr)$,
  \item $(\overline{u},\overline{M},\overline{W})$ takes its values in $L^p(0,T;W^{1,p}_0(\Theta))\times L^r(0,T;L^2_\mathrm{w})\times C([0,T]; L^2)$,
  \item up to a subsequence as $m\rightarrow \infty$,
  \begin{align}
\Pi_{\mD_m}\widetilde{u}_m&\rightarrow \overline{u}\quad \text{a.s. in }  L^p(0,T;L^p),\label{eq: conver0}\\
\nabla_{\mD_m}\widetilde{u}_m 
&\rightarrow \nabla \overline{u}\quad \text{a.s. in }  \bigl(L^p(0,T;L^p)^d\bigr)_\mathrm{w},\label{eq:zm}\\
\overline{M}_m&\rightarrow \overline{M}\quad \text{a.s. in }  L^r(0,T;L^2_\mathrm{w}),\label{eq: conver1}\\
\overline{W}_m&\rightarrow \overline{W}\quad \text{a.s. in }  C([0,T]; L^2),\label{eq: strong con W}
\end{align}
\item $\widetilde{u}_m$ is a solution to the gradient scheme (Algorithm~\ref{alg: GS} with $\mD=\mD_m$) in which $W$ is replaced by $\overline{W}_m$.
\end{itemize}
Furthermore,  up to a subsequence as $m\rightarrow \infty$, for almost all $t,s\in (0,T)$, for all $r\ge 1$,
\begin{align}
\Pi_{\mD_m}\widetilde{u}_m(t)-\Pi_{\mD_m}\widetilde{u}_m(s)\,&\rightarrow \, \overline{u}(t) - \overline{u}(s)\quad \text{in }  L^p(\overline{\Omega}\times\Theta),\label{eq: strongcon u}\\
\overline{M}_m(t)-\overline{M}_m(s)\,&\rightarrow \, \overline{M}(t) - \overline{M}(s)\quad \text{in }
L^r(\overline{\Omega};L^2_\mathrm{w}).\label{eq: strongcon M}
\end{align}
\end{lemma}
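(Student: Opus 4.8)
The plan is to apply Jakubowski's version of the Skorohod representation theorem to the tight family of Lemma~\ref{lem: tight}, and then to transport all the relevant structure — the discrete unknowns, their adaptedness, the scheme, and the a priori estimates — to the new probability space. I would first check that $\mathcal E$ is quasi-Polish: each factor carries a countable family of continuous real-valued functionals separating its points, which is immediate for the Polish factors $L^p(0,T;L^p)$ and $C([0,T];L^2)$, while for $\bigl(L^p(0,T;L^p)^d\bigr)_{\mathrm w}$ one uses a countable dense subset of the dual and for $L^r(0,T;L^2_{\mathrm w})$ the functionals $v\mapsto\int_0^T g(t)\langle v(t),\phi_i\rangle_{L^2}\,dt$ with $g$ in a countable dense subset of $L^{r'}(0,T)$ and $i\in\N$. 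Then~\cite[Theorem~2]{jakubowski1998} applies and provides, along a subsequence, a space $(\overline\Omega,\overline{\mF},\overline{\F},\overline{\mP})$ and $\mathcal E$-valued random variables $\bigl(v_m,g_m,\overline M_m,\overline W_m\bigr)$ with, for each $m$, the law of $\bigl(\Pi_{\mD_m}u_m,\nabla_{\mD_m}u_m,M_{\mD_m},W\bigr)$, and converging $\overline{\mP}$-a.s.\ in $\mathcal E$ to $(\overline u,\overline z,\overline M,\overline W)$ — which is already \eqref{eq: conver0}, \eqref{eq: conver1} and \eqref{eq: strong con W}. Since, for fixed $m$, the first two components range in a finite-dimensional subspace of $\mathcal E$ (where weak and strong topologies agree), $v_m$ and $g_m$ are genuine strongly measurable random variables.

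Next I would recover the discrete unknowns. Because $\|\nabla_{\mD_m}{\cdot}\|_{L^p}$ is a norm on the finite-dimensional space $X_{\mD_m,0}$ (Definition~\ref{def: gdm}), the map $\nabla_{\mD_m}$ is injective and has a continuous linear left inverse on its image; reading $g_m$ on each interval $(t^{(n)},t^{(n+1)}]$ and applying this inverse defines measurably $\widetilde u_m^{(n)}\in X_{\mD_m,0}$ for $n\ge1$, and I set $\widetilde u_m^{(0)}:=\mI_{\mD_m}u_0$. Then $\nabla_{\mD_m}\widetilde u_m=g_m$, and since on $\Omega$ the first component is a fixed Borel function of the second (namely $\Pi_{\mD_m}\circ(\nabla_{\mD_m})^{-1}$ applied time-step-wise), this relation transports and $\Pi_{\mD_m}\widetilde u_m=v_m$; hence $\bigl(\Pi_{\mD_m}\widetilde u_m,\nabla_{\mD_m}\widetilde u_m,\overline M_m,\overline W_m\bigr)$ has the announced law. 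I would then transport the probabilistic structure: $\overline W_m$, carrying the law of $W$ on $C([0,T];\mK)$, is an $\mK$-valued Wiener process with covariance $\mQ$, and I equip $\overline\Omega$ with the augmentation of $\overline{\mF}_m^n:=\sigma\{\overline W_m(t^{(k)}):0\le k\le n\}$. Adaptedness of $u_m$ gives Doob--Dynkin representations $u_m^{(n)}=\Psi_m^n\bigl(W(t^{(0)}),\dots,W(t^{(n)})\bigr)$; rewriting $u_m^{(n)}$ through $(\nabla_{\mD_m})^{-1}\nabla_{\mD_m}u_m$ makes these measurable relations between components of the tuple, which therefore transport, so $\widetilde u_m$ is $(\overline{\mF}_m^n)$-adapted. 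Likewise the definition of $M_{\mD_m}$ and the scheme identity~\eqref{eq: gdmscheme}, tested against a basis of $X_{\mD_m,0}$, are $\mP$-a.s.\ measurable identities in the tuple; transporting them yields $\overline M_m=\sum_i f(\Pi_{\mD_m}\widetilde u_m^{(i)})\bigl(\overline W_m(t^{(i+1)})-\overline W_m(t^{(i)})\bigr)$ and shows that $\widetilde u_m$ solves Algorithm~\ref{alg: GS} with $W$ replaced by $\overline W_m$.

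For the limit, I would fix $\omega$ in the full-measure set: $\Pi_{\mD_m}\widetilde u_m(\omega)\to\overline u(\omega)$ in $L^p(\Theta_T)$ and $\nabla_{\mD_m}\widetilde u_m(\omega)\to\overline z(\omega)$ weakly in $L^p(\Theta_T)^d$, so $\sup_m\|\nabla_{\mD_m}\widetilde u_m(\omega)\|_{L^p}<\infty$; the limit-conformity and coercivity of $(\mD_m)$ then force, by the deterministic GDM argument of~\cite{Droniou.et.al2018}, that $\overline u(\omega)\in L^p(0,T;W^{1,p}_0(\Theta))$ with $\overline z(\omega)=\nabla\overline u(\omega)$ — this gives \eqref{eq:zm}. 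Equality of laws transports all moments: Lemmas~\ref{lem: priori}, \ref{lem: dualnorm} and \ref{lem: martingale} hold verbatim for $\Pi_{\mD_m}\widetilde u_m$, $\nabla_{\mD_m}\widetilde u_m$ and $\overline M_m$ on $\overline\Omega$; in particular $\overline{\E}\bigl[\|\Pi_{\mD_m}\widetilde u_m\|_{L^p(\Theta_T)}^{2p}\bigr]\le C$ by coercivity, and Fatou gives $\overline u\in L^{2p}\bigl(\overline\Omega;L^p(0,T;W^{1,p}_0(\Theta))\bigr)$.

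Finally, for the time-difference convergences: from \eqref{eq: conver0} and the uniform bound on $\overline{\E}\bigl[\|\Pi_{\mD_m}\widetilde u_m\|_{L^p(\Theta_T)}^{2p}\bigr]$, the family $\|\Pi_{\mD_m}\widetilde u_m-\overline u\|^p_{L^p(\Theta_T)}$ is uniformly integrable, so Vitali's theorem gives $\Pi_{\mD_m}\widetilde u_m\to\overline u$ in $L^p(\overline\Omega\times\Theta_T)$; by Fubini, along a further subsequence $\Pi_{\mD_m}\widetilde u_m(t)\to\overline u(t)$ in $L^p(\overline\Omega\times\Theta)$ for a.e.\ $t$, and subtracting the same statement at $s$ yields \eqref{eq: strongcon u}. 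For $\overline M$, the metrics $d_{L^2_{\mathrm w}}$ and $d_{L^r(L^2_{\mathrm w})}$ are bounded, so \eqref{eq: conver1} and dominated convergence give $\overline{\E}\bigl[d_{L^r(L^2_{\mathrm w})}(\overline M_m,\overline M)^r\bigr]\to0$; by Fubini, $\overline M_m(t)\to\overline M(t)$ in $L^r(\overline\Omega;L^2_{\mathrm w})$ for a.e.\ $t$ and every $r\ge1$, whence \eqref{eq: strongcon M}. I expect the main obstacle to be the transport step itself — ensuring that collapsing everything onto the fixed space $\mathcal E$ does not erase the discrete unknowns or the adaptedness/measurability structure. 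Injectivity of $\nabla_{\mD_m}$ is the crux that makes this work: it lets one express $u_m$, its Doob--Dynkin representation, the discrete noise, and the scheme identity as measurable functions of the $\mathcal E$-valued tuple, so all of them survive the Skorohod construction; the limit identification and the Vitali-type upgrades are comparatively routine.
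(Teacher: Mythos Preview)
Your proposal is correct and follows essentially the same route as the paper: Jakubowski--Skorohod on the tight family of Lemma~\ref{lem: tight}, recovery of $\widetilde u_m$ from the transferred variables, identification $\overline z=\nabla\overline u$ via limit-conformity, and Vitali/dominated convergence plus a.e.-in-$t$ extraction for \eqref{eq: strongcon u}--\eqref{eq: strongcon M}. You are in fact more explicit than the paper on two points it leaves terse --- the quasi-Polish verification for $\mathcal E$, and the transport of the discrete unknowns and adaptedness (your use of the injectivity of $\nabla_{\mD_m}$ and Doob--Dynkin is exactly the mechanism the paper alludes to when it writes ``there exists $\widetilde u_m\in X_{\mD_m,0}$ such that\ldots'' and defers adaptedness to~\cite{BanasBrzeet2014}).
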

\begin{proof}
By using Jakubowski's version of the Skorohod theorem~\cite[Theorem 2]{jakubowski1998}, we find a new probability space $(\overline{\Omega},\overline{\mF},\overline{\F},\overline{\mP})$, a sequence of random variables on this space 
$\bigl(\overline{u}_m,\overline{z}_m,\overline{M}_m,\overline{W}_m\bigr)$ taking its values in space $\mathcal E$
  with the same laws, for each $m\in \N$, as $\bigl(\Pi_{\mD_m} u_m, \nabla_{\mD_m} u_m, M_{\mD_m},W\bigr)$, and random variables $(\overline{u},\overline{z},\overline{M},\overline{W})$ in $\mathcal E$, so that up to a subsequence as $m\rightarrow \infty$,
\begin{align}
\overline{u}_m&\rightarrow \overline{u}\quad \text{a.s. in }  L^p(0,T;L^p),\label{eq: conver001}\\
\overline{z}_m&\rightarrow \overline{z}\quad \text{a.s. in }  \bigl(L^p(0,T;L^p)^d\bigr)_\mathrm{w},\label{eq:zm0}
\end{align}
and the convergences~\eqref{eq: conver1},~\eqref{eq: strong con W} hold.

Since $(\overline{u}_m,\overline{z}_m)$ has the same law as $(\Pi_{\mD_m} u_m, \nabla_{\mD_m} u_m)$,
 there exists $\widetilde{u}_m\in X_{\mD_m,0}$  such that 
\[
\overline{u}_m = \Pi_{\mD_m}\widetilde{u}_m,\quad \overline{z}_m = \nabla_{\mD_m}\widetilde{u}_m 
\]
and $\widetilde{u}_m$ is a solution to the gradient scheme (Algorithm~\ref{alg: GS} with $\mD=\mD_m$) in which $W$ is replaced by $\overline{W}_m$. More precisely, for any $n\in\{0,\cdots,N_m-1\}$ and $\phi\in X_{\mD_m,0}$, $\widetilde{u}_m$ satisfies, $\overline{\mP}$ a.s.,
\begin{multline}\label{eq: tilde u}
\big\langle d_{\mD_m}^{(n+\frac12)} \widetilde{u}_m,\Pi_{\mD_m} \phi\big\rangle_{L^2}
+
\dtD \big\langle a(\Pi_{\mD_m} \widetilde{u}_m^{(n+1)},\nabla_{\mD_m} \widetilde{u}_m^{(n+1)}),  \nabla_{\mD_m} \phi\big\rangle_{L^{p'},L^p}\\
=
\big\langle f(\Pi_{\mD_m} \widetilde{u}_m^{(n)})\Delta^{(n+1)}\overline{W}_m,  \Pi_{\mD_m} \phi\big\rangle_{L^2}.
\end{multline}
Furthermore, applying \cite[Lemma 4.8]{Droniou.et.al2018} and the a.s. convergences \eqref{eq: conver001} and \eqref{eq:zm0}, the limit-conformity of $(\mD_m)_{m\in\N}$ ensures that 
\begin{equation}\label{eq: strong con nabla}
\overline{z} = \nabla \overline{u}\,,\quad\nabla_{\mD_m}\widetilde{u}_m 
\rightarrow \nabla \overline{u}\quad \text{a.s. in }  \bigl(L^p(\Theta_T)^d\bigr)_\mathrm{w}, \text{ and }\overline{u}\in L^p(0,T;W^{1,p}_0(\Theta)).
\end{equation}
From~\eqref{eq: conver001}--\eqref{eq: strong con nabla} we obtain the first part of the lemma including~\eqref{eq: conver0} and~\eqref{eq:zm}.

We now prove~\eqref{eq: strongcon u} and~\eqref{eq: strongcon M} as the second part of the lemma.
We obtain, from~\eqref{eq:apriori}--\eqref{eq:apriori.moments}, the coercivity of $(\mD_m)_{m\in\N}$ and \eqref{eq:martingale}, for any $q\geq 1$
\begin{align}\label{eq: bound1}
\sup_{m\in\N}\E\bigl[\|\Pi_{\mD_m}\widetilde{u}_m\|_{L^p(\Theta_T)}^{q} 
&+ \|\Pi_{\mD_m}\widetilde{u}_m\|_{L^\infty(0,T;L^2)}^2
+ \|\nabla_{\mD_m}\widetilde{u}_m\|_{L^p(\Theta_T)}^p
\bigr] \nonumber\\+  
&\sup_{m\in\N}\E\|\overline{M}_m\|^{q}_{ L^\infty(0,T;L^2)} \le C.
\end{align}
From~\eqref{eq: conver0},~\eqref{eq: conver1} and~\eqref{eq: bound1}, we obtain the following result by applying the Vitali theorem
\begin{align}
\Pi_{\mD_m}\widetilde{u}_m&\rightarrow \overline{u}\quad \text{in }  L^p(\overline{\Omega}\times (0,T)\times\Theta)\,\text{ as } m\rightarrow \infty,\label{eq: conver00}\\
\overline{M}_m&\rightarrow \overline{M}\quad \text{in }  L^r(\overline{\Omega}\times (0,T);L^2_\mathrm{w})\,\text{ as } m\rightarrow \infty.\label{eq:conver01}
\end{align}
Hence, up to a subsequence, one has ~\eqref{eq: strongcon u}  for almost all $t,s\in (0,T)$.
The convergence~\eqref{eq: strongcon M} can be obtained from \eqref{eq:conver01} using the classical a.e.~extraction in $L^r(0,T)$ on the function $t\mapsto \int_{\overline{\Omega}}d_{L^2_{\mathrm{w}}}(\overline{M}_m(t)-\overline{M}(t),0)^r d\mP$.
\end{proof}

The continuity of the stochastic processes $\overline{u}$ and $\overline{M}$ is showed in the following lemma.
\begin{lemma}\label{lem: continuity}
The stochastic processes $\overline{u}$ and $\overline{M}$ have continuous versions in $C([0,T],L^2_\mathrm{w})$ and 
$C([0,T],L^2)$, 
respectively.
\end{lemma}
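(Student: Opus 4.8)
The plan is to establish the continuity statements for $\overline{u}$ and $\overline{M}$ separately, in both cases by combining a uniform moment bound in a Hölder-type or Sobolev-type norm (transported from the discrete side via equality of laws) with the Kolmogorov continuity test, and then reconciling the resulting continuous version with the already-known convergences. First I would treat $\overline{M}$. Since $\overline{M}_m$ has the same law as $M_{\mD_m}$, estimate \eqref{eq: M1} (equivalently Lemma~\ref{lem: martingale}) transfers verbatim to give $\E[\|\overline{M}_m(t)-\overline{M}_m(s)\|_{L^2}^r]\le C(|t-s|+\dtDm)^{r/2}$ for every $r=2^q$. Taking $m\to\infty$ along the subsequence and using \eqref{eq: strongcon M} together with Fatou's lemma (passing the $L^2$-weak lower semicontinuity of the norm through the a.e.~$(t,s)$ convergence) yields $\E[\|\overline{M}(t)-\overline{M}(s)\|_{L^2}^r]\le C|t-s|^{r/2}$ for a.e.~$(t,s)$, hence — by a density/continuity-in-$L^r(\overline{\Omega})$ argument using that $\overline M\in L^r(0,T;L^2_{\mathrm w})$ — for all $t,s\in[0,T]$. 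Choosing $r$ large enough that $r/2-1>0$, the Kolmogorov--Čentsov theorem (in the separable Banach space $L^2$; see e.g.~\cite{DaPrato2014}) produces a modification of $\overline M$ with $\overline{\mP}$-a.s.~Hölder-continuous paths in $C([0,T];L^2)$, which is the claim.

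For $\overline{u}$ the argument is the same in spirit but carried out in the dual norm. Because $(\Pi_{\mD_m}\widetilde u_m,\nabla_{\mD_m}\widetilde u_m)$ has the same law as $(\Pi_{\mD_m}u_m,\nabla_{\mD_m}u_m)$, Lemma~\ref{lem: dualnorm} — specifically \eqref{eq:consequence} — transfers to $\E[|\Pi_{\mD_m}\widetilde u_m(t)-\Pi_{\mD_m}\widetilde u_m(s)|^r_{*,\mD_m}]\le C(|t-s|+\dtDm)^{\alpha r}$ with $\alpha=\min\{1/2,1/p\}$. The subtlety is that $|\cdot|_{*,\mD_m}$ depends on $m$; I would pass to the limit by testing against a fixed $\phi_i\in C_c^\infty(\Theta)$ from the dense countable family, using the coercivity of $(\mD_m)_{m\in\N}$ to control $\|\nabla_{\mD_m}(\mI_{\mD_m}\phi_i)\|_{L^p}+\|\Pi_{\mD_m}(\mI_{\mD_m}\phi_i)\|_{L^2}$ and the consistency to get $\Pi_{\mD_m}(\mI_{\mD_m}\phi_i)\to\phi_i$; this gives $|\langle \Pi_{\mD_m}\widetilde u_m(t)-\Pi_{\mD_m}\widetilde u_m(s),\phi_i\rangle_{L^2}|\le C(\phi_i)|\Pi_{\mD_m}\widetilde u_m(t)-\Pi_{\mD_m}\widetilde u_m(s)|_{*,\mD_m}$. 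Combining \eqref{eq: strongcon u} (convergence of the time-increments in $L^p(\overline\Omega\times\Theta)$, hence testing against $\phi_i$ passes to the limit) with Fatou yields $\E[|\langle\overline u(t)-\overline u(s),\phi_i\rangle_{L^2}|^r]\le C_i|t-s|^{\alpha r}$ for a.e.~$(t,s)$ and then all $t,s$. Summing the Kolmogorov-type bounds over $i$ with the $2^{-i}$ weights of $d_{L^2_{\mathrm w}}$ controls $\E[d_{L^2_{\mathrm w}}(\overline u(t),\overline u(s))^r]$ (the moment bound on $\|\overline u\|_{L^\infty(0,T;L^2)}$ from \eqref{eq: bound1} keeps everything inside the bounded ball $B$ where $d_{L^2_{\mathrm w}}$ metrises the weak topology), so Kolmogorov's test in the complete metric space $(B,d_{L^2_{\mathrm w}})$ gives a modification with paths in $C([0,T];L^2_{\mathrm w})$; the essential boundedness $\|\overline u\|_{L^\infty(0,T;L^2)}<\infty$ upgrades the target to $C([0,T];B_{\mathrm w})$.

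The step I expect to be the main obstacle is the limit passage for $\overline u$: making rigorous that the $m$-dependent dual norms $|\cdot|_{*,\mD_m}$ converge, in the relevant sense, to testing against the fixed functionals $\phi_i$. This requires carefully pairing the coercivity and consistency of $(\mD_m)_{m\in\N}$ so that a single constant $C_i$ (depending on $\phi_i$ but not on $m$) bounds the test functions uniformly, and then checking that the a.e.-in-$(t,s)$ convergence supplied by \eqref{eq: strongcon u} is enough — via Fatou in $\overline\Omega$ and a continuity-in-$L^r(\overline\Omega)$ argument in $(t,s)$ — to promote the a.e.~bound to an everywhere bound, which is what Kolmogorov's test demands. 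The $\overline M$ part and the final Kolmogorov applications are then routine.
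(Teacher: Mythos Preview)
Your overall strategy is correct and matches the paper's: transfer the discrete increment estimates via equality of laws, pass to the limit with Fatou plus lower semicontinuity, then apply the Kolmogorov test. For $\overline{M}$ you are essentially doing what the paper does; the only difference is that the paper invokes a discontinuous Ascoli--Arzel\`a theorem to get uniform-in-time weak convergence of $\overline{M}_m$ (which cleanly upgrades the a.e.~increment bound to an everywhere bound), whereas you appeal to a less specific density/continuity argument. That is a cosmetic difference.

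For $\overline{u}$ there is a genuine gap in your choice of discrete test function. To extract $\langle \Pi_{\mD_m}\widetilde u_m(s')-\Pi_{\mD_m}\widetilde u_m(s),\phi_i\rangle_{L^2}$ from the dual norm $|\cdot|_{*,\mD_m}$, you need a discrete element $w_m\in X_{\mD_m,0}$ with $\Pi_{\mD_m}w_m$ close to $\phi_i$ in $L^2$ \emph{and} with $\|\Pi_{\mD_m}w_m\|_{L^2}+\|\nabla_{\mD_m}w_m\|_{L^p}$ bounded independently of $m$. You propose $w_m=\mI_{\mD_m}\phi_i$ and say that coercivity controls $\|\nabla_{\mD_m}\mI_{\mD_m}\phi_i\|_{L^p}$. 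It does not: coercivity is the inequality $\|\Pi_{\mD_m}v\|_{L^p}\le C_p\|\nabla_{\mD_m}v\|_{L^p}$, which goes the wrong way. The only assumed property of $\mI_{\mD_m}$ is that $\Pi_{\mD_m}\mI_{\mD_m}\phi\to\phi$ in $L^2$; nothing in the GDM framework bounds $\|\nabla_{\mD_m}\mI_{\mD_m}\phi\|_{L^p}$ uniformly, and in many concrete GDs (e.g.~$L^2$-projection onto a non-conforming space) this quantity can blow up.

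The fix, which is exactly what the paper does, is to replace $\mI_{\mD_m}$ by the argmin interpolator
\[
P_{\mD_m}\phi:=\argmin_{w\in X_{\mD_m,0}}\bigl(\|\Pi_{\mD_m}w-\phi\|_{L^{\widehat p}}+\|\nabla_{\mD_m}w-\nabla\phi\|_{L^p}\bigr),
\]
so that $\|\Pi_{\mD_m}P_{\mD_m}\phi_i-\phi_i\|_{L^{\widehat p}}+\|\nabla_{\mD_m}P_{\mD_m}\phi_i-\nabla\phi_i\|_{L^p}=S_{\mD_m}(\phi_i)$, which is uniformly bounded (take $w=0$) and tends to zero by consistency. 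After normalising the $\phi_i$ by $\|\psi_i\|_{L^{\widehat p}}+\|\nabla\psi_i\|_{L^p}$ to make the bound independent of $i$, you then get
\[
\E\bigl[|\langle \Pi_{\mD_m}\widetilde u_m(s')-\Pi_{\mD_m}\widetilde u_m(s),\phi_i\rangle_{L^2}|^r\bigr]\le C(|s'-s|+\dtDm)^{\alpha r}+C\,S_{\mD_m}(\phi_i)^r,
\]
and the extra $S_{\mD_m}(\phi_i)$ term vanishes in the limit by dominated convergence when you sum with the $2^{-i}$ weights. The rest of your argument (Fatou, then Kolmogorov in the ball $(B,d_{L^2_{\mathrm w}})$ with $\alpha r>1$) then goes through.
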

\begin{proof}
The continuity of $\overline{u}$ will be proved using Kolmogorov's test~\cite[Theorem 3.3]{DaPrato2014}. 
Let $\{\psi_i\,:\,i\in\N\}\subset C_c^\infty(\Theta)\backslash\{0\}$ be a dense countable subset in $L^{2}$ and define the metric
\[
\widehat{d}_{L^2_{\mathrm{w}}}(v,w) = \sum_{i\in\N} \frac{| \langle v-w,\phi_i\rangle_{L^{2}}|}{2^i}\quad \text{for }v,w\in L^2,
\]
with $\phi_i:=\psi_i/(\|\psi_i\|_{L^{\widehat{p}}}+ \|\nabla\psi_i\|_{L^p})$, where we recall that $\widehat{p}=\max\{2,p'\}$. This metric defines the weak topology of $L^2$ on its closed balls, which are compact and thus complete for this topology.
To estimate the continuity of $u$, we start by estimating $\widehat{d}_{L^2_{\mathrm{w}}}\bigl(\Pi_{\mD_m}u_m(s),\Pi_{\mD_m}u_m(s')\bigr)$ for $0\leq s\leq s'\leq T$.

We first define the interpolator $P_{\mD_m} : W^{1,p}_0(\Theta)\cap L^{\widehat{p}} \rightarrow X_{\mD_m,0}$ by 
\begin{equation}\label{def P}
P_{\mD_m} \phi := \text{argmin}_{w\in X_{\mD_m,0}} 
\bigl(\|\Pi_{\mD_m} w - \phi\|_{L^{\ps}} + \|\nabla_{\mD_m} w-\nabla \phi\|_{L^p}\bigr).
\end{equation}
We have, for $r\ge 1$,
\begin{align}\label{eq: disAS2}
&\E \bigg[\bigg|\int_\Theta \bigg(\Pi_{\mD_m}\widetilde{u}_m(s',\x)-\Pi_{\mD_m}\widetilde{u}_m(s,\x)\bigg)\phi_i(\x) d\x\bigg|^r\bigg]\nonumber\\
&\leq
2^{r-1}\E\bigg[\bigg|\int_\Theta \bigg(\Pi_{\mD_m}\widetilde{u}_m(s',\x)-\Pi_{\mD_m}\widetilde{u}_m(s,\x)\bigg)\Pi_{\mD_m}P_{\mD_m} \phi_i(\x) d\x\bigg|^r\bigg]\nonumber\\
&\quad +
2^{r-1}\E\bigg[\bigg|\int_\Theta \bigg(\Pi_{\mD_m}\widetilde{u}_m(s',\x)-\Pi_{\mD_m}\widetilde{u}_m(s,\x)\bigg)\bigg(\Pi_{\mD_m}P_{\mD_m} \phi_i(\x)- \phi_i(\x)\bigg) d\x\bigg|^r\bigg]\nonumber\\
&\leq 
2^{r-1}\E\bigg[\bigg|\Pi_{\mD_m}\widetilde{u}_m(s',\x)-\Pi_{\mD_m}\widetilde{u}_m(s,\x)\bigg|_{*,\mD}^r\bigg]\left(\|\Pi_{\mD_m}P_{\mD_m} \phi_i\|_{L^2}+\|\nabla_{\mD_m} P_{\mD_m} \phi_i\|_{L^p}\right)^r\nonumber\\
&\quad+
2^{r-1}\E\bigg[\|\Pi_{\mD_m}\widetilde{u}_m\|_{L^\infty(0,T;L^2)}^r\bigg]
\|\Pi_{\mD_m}P_{\mD_m} \phi_i- \phi_i\|_{L^2}^r
\end{align}
It follows from~\eqref{def P} and $\|\phi_i\|_{L^{\widehat{p}}}+\|\nabla\phi_i\|_{L^p}\le C$ that
\begin{align*}
\|\Pi_{\mD_m}P_{\mD_m}\phi_i - \phi_i\|_{L^{2}}
&\leq CS_{\mD_m}(\phi_i)\leq C, \mbox{ and}\\
\|\Pi_{\mD_m}P_{\mD_m} \phi_i\|_{L^2}
+\|\nabla_{\mD_m} P_{\mD_m} \phi_i\|_{L^p}
&\leq 
C.
\end{align*}
Note that the bound $S_{\mD_m}(\phi_i)\le 1$ is obtained selecting $w=0$ in the definition of this quantity.
We then estimate the right hand side of~\eqref{eq: disAS2} using Lemmas~\ref{lem: priori} and~\ref{lem: dualnorm} to obtain
\begin{align*}
\E \bigg[\bigg|\int_\Omega \bigg(\Pi_{\mD_m}\widetilde{u}_m(s',\x)-\Pi_{\mD_m}&\widetilde{u}_m(s,\x)\bigg)\phi_i(\x) d\x\bigg|^r\bigg]\\
&\leq 
 C\big( |s'-s| + \dtDm\big)^{\alpha r}
+
CS_{\mD_m}(\phi_i)\\
&\leq 
C|s'-s|^{\alpha r}  + C \dtDm^{\alpha r} 
+C S_{\mD_m}(\phi_i).
\end{align*}
Recalling the definition of $\widehat{d}_{L^2_{\mathrm{w}}}$ and using Jensen's inequality to write
\[
\widehat{d}_{L^2_{\mathrm{w}}}(u,v)^r=\left(\sum_{i\in\N} \frac{| \langle v-w,\phi_i\rangle_{L^{2}}|}{2^i}\right)^r
\le \sum_{i\in\N} \frac{| \langle v-w,\phi_i\rangle_{L^{2}}|^r}{2^i},
\]
we infer 
\[
\E \bigg[\widehat{d}_{L^2_{\mathrm{w}}}\bigl(\Pi_{\mD_m}\widetilde{u}_m(s),\Pi_{\mD_m}\widetilde{u}_m(s')\bigr)^r \bigg]
\leq 
C|s'-s|^{\alpha r} +
C\sum_{i\in\N}  \frac{C\dtDm^{\alpha r}  + S_{\mD_m}(\phi_i)}{2^i}.
\]
Since $\dtDm\to 0$ and $S_{\mD_m}(\phi_i)\to 0$ for all $i\in\N$, while being uniformly bounded as seen above, we can apply the dominated convergence theorem on the last sum to see that it tends to $0$ as $m\to\infty$.
Together with~\eqref{eq: strongcon u} and Fatou's lemma, this implies, for almost any~$s,s'$,
\[
\E \bigg[\widehat{d}_{L^2_{\mathrm{w}}}\bigl(\overline{u}(s),\overline{u}(s')\bigr)^r \bigg]
\leq 
C|s'-s|^{\alpha r}.
\]
By choosing $r$ such that $\alpha r >1$, we obtain the desired continuity of $\overline{u}$ by applying the Kolmogorov test.

We now prove the continuity of $\overline{M}$. It follows from~\eqref{eq: M1} and the fact that $\overline{M}_m$ has the same law as $M_m$ that 
\begin{equation}\label{eq: continu strong}
\E\bigl[
\|\overline{M}_m(s')-\overline{M}_m(s)\|_{L^2}^r
\bigr]
\leq C (|s'-s| + \dtDm)^{r/2},
\end{equation}
and $\E[\|\overline{M}_m\|^r_{L^\infty(0,T; L^2)}] \leq C$, which implies 
$\|\overline{M}_m\|^r_{L^\infty(0,T;L^r(\overline{\Omega}; L^2))} \leq C$.
Estimate~\eqref{eq: continu strong} and the discontinuous Ascoli-Arzel\`a theorem \cite[Theorem C.11]{Droniou.et.al2018} imply
\[
\overline{M}_m \rightarrow \overline{M}\quad\text{ uniformly on $[0,T]$ in $(L^r(\overline{\Omega}; L^2))_\mathrm{w}$, as $m\rightarrow\infty$,} 
\]
and $\overline{M}\in C\bigl([0,T];(L^r(\overline{\Omega}; L^2))_\mathrm{w}\bigl)$.
It follows from this convergence, ~\eqref{eq: continu strong}, the weak lower semicontinuity of norms and Fatou's lemma that
\[
\E\bigl[
\|\overline{M}(s')-\overline{M}(s)\|_{L^2}^r
\bigr]
\leq C |s'-s|^{r/2}.
\]
The continuity of $\overline{M}$ follows immediately by choosing $r>3$ and applying the Kolmogorov test.
\end{proof}

\section{Identification of the limit}\label{sec:limit}
In this section, 
we first find a representation of the martingale part $\overline{M}$. Since $\overline{M}$ is continuous from $[0,T]$ to $L^2$, the representation theorem in~\cite[Theorem 8.2]{DaPrato2014} can be used.  We will check conditions of~\cite[Theorem 8.2]{DaPrato2014} in the following lemma.

\begin{lemma}\label{lem: martingale2}
The process $t\in [0,T]\mapsto \overline{M}(t,\omega)\in L^2$ is a square integrable continuous martingale, with quadratic variation defined for all $a,b\in L^2$ by
\begin{equation}\label{eq: cross-variation}
\langle\langle M(t)\rangle\rangle (a,b) = \int_0^t \langle \bigl(f(\overline{u})\mQ^{1/2}\bigr)^*(a),\bigl(f(\overline{u})\mQ^{1/2}\bigr)^*(b)\rangle_{\mK}\,ds,
\end{equation}
for any $t\geq 0$.
\end{lemma}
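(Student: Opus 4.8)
The plan is to verify the two hypotheses of the martingale representation theorem \cite[Theorem 8.2]{DaPrato2014}: that $\overline M$ is a square-integrable continuous $\overline\F$-martingale, and that its operator quadratic variation is $t\mapsto\int_0^t f(\overline u)\mQ f(\overline u)^*\,ds$, which is exactly \eqref{eq: cross-variation}. Continuity in $L^2$ is already Lemma~\ref{lem: continuity}. For square-integrability, recall that $\overline M_m$ has the same law as $M_{\mD_m}$, so the bound $\E[\|M_{\mD_m}\|^r_{L^\infty(0,T;L^2)}]\le C$ from Lemma~\ref{lem: martingale} transfers to $\overline M_m$; combining this with the a.s.\ convergence of $\overline M_m$ and Fatou's lemma yields $\overline\E[\sup_{t\in[0,T]}\|\overline M(t)\|^r_{L^2}]\le C$ for every $r=2^q$, in particular $\overline M(t)\in L^2(\overline\Omega;L^2)$ for all $t$.

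For the martingale property, the starting point is that the discrete process is a martingale: each increment $f(\Pi_{\mD_m}u_m^{(i)})\Delta^{(i+1)}W$ has zero conditional mean given $\mF_{t^{(i)}}$, since $\Pi_{\mD_m}u_m^{(i)}$ is $\mF_{t^{(i)}}$-measurable while $\Delta^{(i+1)}W$ is independent of $\mF_{t^{(i)}}$ and centred. Hence, for all $0\le s\le t\le T$, all $a\in L^2$, and every bounded continuous functional $\Psi$ of the restriction to $[0,s]$ of $(\Pi_{\mD_m}u_m,\nabla_{\mD_m}u_m,M_{\mD_m},W)$, one has $\E[\langle M_{\mD_m}(t)-M_{\mD_m}(s),a\rangle_{L^2}\,\Psi]=0$ (after replacing $s,t$ by their nearest grid points, whose effect is $O(\dtDm)$). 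Equality in law transfers this identity to $\overline\Omega$ with the barred processes, and letting $m\to\infty$ — using the a.s.\ convergences \eqref{eq: conver0}, \eqref{eq: conver1}, \eqref{eq: strong con W}, the continuity of $\Psi$, and the uniform moment bounds \eqref{eq: bound1} to pass the expectation through the limit by equi-integrability — gives $\overline\E[\langle\overline M(t)-\overline M(s),a\rangle_{L^2}\,\overline\Psi]=0$. As $a$ runs over a countable dense subset of $L^2$ and $\overline\Psi$ over enough functionals, this is the martingale property of $\overline M$ relative to the augmented filtration $\overline\F$ generated by $(\overline u,\overline M,\overline W)$. Applied to the increments of $\overline W_m$, the same argument shows $\overline W$ is a $\mQ$-Wiener process for $\overline\F$.

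For the quadratic variation, I run the identical scheme on the real-valued process $t\mapsto\langle\overline M(t),a\rangle_{L^2}\langle\overline M(t),b\rangle_{L^2}-\int_0^t\langle(f(\overline u)\mQ^{1/2})^*a,(f(\overline u)\mQ^{1/2})^*b\rangle_{\mK}\,ds$. On the discrete level, expanding $\langle f(v)\Delta^{(i+1)}W,a\rangle\langle f(v)\Delta^{(i+1)}W,b\rangle$ and using the independence and covariance structure of $W$ gives the conditional identity
\[
\E\bigl[\langle f(v)\Delta^{(i+1)}W,a\rangle_{L^2}\langle f(v)\Delta^{(i+1)}W,b\rangle_{L^2}\mid\mF_{t^{(i)}}\bigr]=\dtDm\,\langle(f(v)\mQ^{1/2})^*a,(f(v)\mQ^{1/2})^*b\rangle_{\mK}
\]
with $v=\Pi_{\mD_m}u_m^{(i)}$, so that $\langle M_{\mD_m}(t),a\rangle\langle M_{\mD_m}(t),b\rangle$ minus the Riemann sum $\sum_{i}\dtDm\langle(f(\Pi_{\mD_m}u_m^{(i)})\mQ^{1/2})^*a,(f(\Pi_{\mD_m}u_m^{(i)})\mQ^{1/2})^*b\rangle_{\mK}$ is a discrete martingale; this sum is the value at $t$ of the integral above with $\overline u$ replaced by the piecewise-constant $\Pi_{\mD_m}u_m$, up to a one-step time shift. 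Transferring to barred processes and passing to the limit: the Riemann-sum term converges because $\Pi_{\mD_m}\widetilde u_m\to\overline u$ a.s.\ in $L^p(\Theta_T)$, hence $f(\Pi_{\mD_m}\widetilde u_m)\to f(\overline u)$ by continuity of $f$, while \eqref{eq: f} and the $L^\infty(0,T;L^2)$ moments give the equi-integrability needed for Vitali's theorem; the martingale-part terms converge using the strong $L^{r'}(\overline\Omega\times(0,T))$ convergence of $\langle\overline M_m,a\rangle$ obtained from \eqref{eq:conver01} and the uniform moments. Together with the continuity of $\overline M$ and the vanishing of the above process at $t=0$, this identifies the quadratic variation of $\overline M$ as \eqref{eq: cross-variation}.

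The main difficulty is the limit passage in the weak-in-space topology attached to $\overline M_m$: \eqref{eq: conver1} is only a.s.\ convergence in $L^r(0,T;L^2_{\mathrm w})$, so neither pointwise-in-time values nor the quadratic expressions $\langle\overline M_m(t),a\rangle\langle\overline M_m(t),b\rangle$ converge for free. Upgrading to a usable mode of convergence requires combining the a.s.\ weak convergence with the uniform higher-moment bounds on $\|\overline M_m\|_{L^\infty(0,T;L^2)}$ and $\|\Pi_{\mD_m}\widetilde u_m\|_{L^\infty(0,T;L^2)}$ via Vitali's theorem, and checking that the time-discretisation mismatch between $f(\Pi_{\mD_m}u_m^{(i)})$ (sampled at $t^{(i)}$) and $f(\overline u(t))$ is harmless since $\dtDm\to0$ and $\Pi_{\mD_m}\widetilde u_m\to\overline u$ in $L^p(\Theta_T)$. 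A secondary technicality is to choose the test vectors $a,b$ and the functionals $\Psi$ within countable families adapted to these weak topologies, so that the limiting identities genuinely pin down the conditional expectations and the quadratic variation.
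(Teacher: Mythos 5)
Your proposal is correct and follows essentially the same route as the paper: establish the discrete martingale identity and the discrete compensator identity (via the conditional covariance of $f(v)\Delta^{(i+1)}W$), transfer them to the barred processes by equality of laws, and pass to the limit using the a.s.\ convergences, the continuity of $f$, and the uniform moment bounds, with continuity and square integrability supplied by Lemma~\ref{lem: continuity} and \eqref{eq: bound1}. The only cosmetic difference is that you test against functionals of the whole quadruple on $[0,s]$ whereas the paper tests only against finitely many values of $\overline{W}_m$ at grid points, and the limit-passage difficulty you flag for the weak-in-space topology is precisely what the paper resolves through the increment convergence \eqref{eq: strongcon M}.
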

\begin{proof} 
It follows from the fact that $M_{\mD_m}$ is piecewise constant and the same laws that $\overline{M}_m$ is piecewise constant for any $m\in\N$. Furthermore, for all $t\in [0,T]$ and $\overline{\mP}$ a.e., $\overline{M}_m$ satisfies 
\[
\overline{M}_m(t)= \sum_{0\leq i\,\dtDm < t} f(\Pi_{\mD_m}\widetilde{u}_m^{(i)})\Delta^{(i+1)}\overline{W}_m.
\]
Since $\widetilde{u}_m$ is a solution to the gradient scheme (Algorithm~\ref{alg: GS} with $\mD=\mD_m$), $\widetilde{u}_m$ is adapted to 
\[
\mF_{i\dtDm}:= \sigma\{\overline{W}_m(k\,\dtDm),\, k = 1,\cdots,i\},
\]
and the process $\overline{M}_m^{(i)}:= \overline{M}_m(i\,\dtDm)$ defines a martingale with respect to this filtration. 
In particular, we have the following identity
\begin{equation}\label{eq: martingale1}
\E\bigl[
\bigl(\langle a,\overline{M}_m^{(j)}\rangle_{L^2} - \langle a,\overline{M}_m^{(i)}\rangle_{L^2}\bigr)
\psi\bigl(\overline{W}_m(\dtDm),\cdots,\overline{W}_m(i\,\dtDm)\bigr)
\bigr] = 0
\end{equation}
for all $0\leq i\leq j\leq N_m$ and any bounded continuous function $\psi:(L^2)^i\to\R$. Furthermore, we obtain
\begin{align}\label{eq: martingale2}
\E\bigg[
\bigg(&\langle a, \overline{M}_m^{(j)}\rangle_{L^2}
\langle b, \overline{M}_m^{(j)}\rangle_{L^2} 
-
\langle a, \overline{M}_m^{(i)}\rangle_{L^2}
\langle b, \overline{M}_m^{(i)}\rangle_{L^2} \nonumber\\
&-
\sum_{i+1\leq k\leq j} \dtDm\bigl\langle \bigl(f(\Pi_{\mD_m}\widetilde{u}_m^{(k)})\mQ^{1/2}\bigr)^*(a),\bigl(f(\Pi_{\mD_m}\widetilde{u}_m^{(k)})\mQ^{1/2}\bigr)^*(b)\bigr\rangle_{\mK}
\bigg)\nonumber\\
&\psi\bigl(\overline{W}_m(\dtDm),\cdots,\overline{W}_m(i\,\dtDm)\bigr)
\bigg] = 0.
\end{align}

\underline{Proof that $\overline{M}$ is a martingale:}
We have to show that for almost all $0\leq s \leq t\leq T$, all $K\in \N$, any bounded continuous function $\phi$ defined on $(L^2)^K$, and for any choice of times $0\leq s_1<s_2<\cdots<s_K\leq s$, the following relation holds
\begin{equation}\label{eq: martingale3}
\E\bigl[
\bigl(\langle a,\overline{M}(t)\rangle_{L^2} - \langle a,\overline{M}(s)\rangle_{L^2}\bigr)
\phi\bigl(\overline{W}(s_1),\cdots,\overline{W}(s_K)\bigr)
\bigr]
=0.
\end{equation}

Let $\lfloor x\rfloor$ denote the floor of $x$ for any $x\geq 0$. For all $0\leq i\leq K$ we have
\[
\bigl\lfloor \frac{s_i}{\dtDm}\bigr\rfloor\, \dtDm\rightarrow s_i
\quad\text{ as }\quad m\rightarrow\infty.
\]
It follows from~\eqref{eq: strong con W} and the continuity of $\phi$ that 
\begin{equation}\label{eq: martingale4}
\phi\left(\overline{W}(\bigl\lfloor \frac{s_1}{\dtDm}\bigr\rfloor\, \dtDm),\cdots,\overline{W}(\bigl\lfloor \frac{s_K}{\dtDm}\bigr\rfloor\, \dtDm)\right)\rightarrow
\phi\bigl(\overline{W}(s_1),\cdots,\overline{W}(s_K)\bigr)
\end{equation}
as $m\rightarrow\infty$, $\overline{\mP}$-a.s. in $(L^2)^K$. For any $m\in\N$ and $\dtDm>0$ there exist $l_1,l_2\in \{0,\ldots,N_m-1\}$ such that $s\in (t^{(l_1)},t^{(l_1+1)}]$ and $t\in (t^{(l_2)},t^{(l_2+1)}]$. From~\eqref{eq: martingale1} we obtain that
\begin{equation}\label{eq: martingale5}
\E\bigl[
\bigl(\langle a,\overline{M}_m(t)\rangle_{L^2} - \langle a,\overline{M}_m(s)\rangle_{L^2}\bigr)
\psi\bigl(\overline{W}_m(\dtDm),\cdots,\overline{W}_m(l_1\,\dtDm)\bigr)
\bigr] = 0,
\end{equation}
for any bounded continuous function $\psi$ defined on $(L^2)^{l_1}$. Since $\lfloor \frac{s_K}{\dtDm}\rfloor \leq l_1$, we can choose $\psi$ in~\eqref{eq: martingale5} such that 
\[
\psi\bigl(\overline{W}_m(\dtDm),\cdots,\overline{W}_m(l_1\,\dtDm)\bigr)
=
\phi\left(\overline{W}_m(\bigl\lfloor \frac{s_1}{\dtDm}\bigr\rfloor\, \dtDm),\cdots,\overline{W}_m(\bigl\lfloor \frac{s_K}{\dtDm}\bigr\rfloor\, \dtDm)\right).
\]
We obtain~\eqref{eq: martingale3} by taking limit of~\eqref{eq: martingale5} as $m$ tends to infinity and using the convergences~\eqref{eq: strongcon M} and~\eqref{eq: martingale4}.
\medskip

\underline{Proof of~\eqref{eq: cross-variation}:}
From the definition of the quadratic variation \cite[page 75]{DaPrato2014},
 in order to prove~\eqref{eq: cross-variation}, we have to show that  
\begin{align}\label{eq: martingale6}
\E\bigg[
\bigg(&\langle a, \overline{M}(t)\rangle_{L^2}
\langle b, \overline{M}(t)\rangle_{L^2} 
-
\langle a, \overline{M}(s)\rangle_{L^2}
\langle b, \overline{M}(s)\rangle_{L^2} \nonumber\\
&-
\int_s^t \bigl\langle \bigl(f(\overline{u})\mQ^{1/2}\bigr)^*(a),\bigl(f(\overline{u})\mQ^{1/2}\bigr)^*(b)\bigr\rangle_{\mK}
\bigg)
\phi\bigl(\overline{W}(s_1),\cdots,\overline{W}(s_K)\bigr)
\bigg] = 0.
\end{align}
The above identity can be obtained by using the same arguments as in the proof of~\eqref{eq: martingale3} with the continuity of $f$,~\eqref{eq: conver00} and~\eqref{eq: martingale2}.

The continuity and square integrability of $\overline{M}$ follows from Lemma~\ref{lem: continuity} and \eqref{eq: bound1}.
\end{proof}

We now apply the continuous martingale representation~\cite[Theorem 8.2]{DaPrato2014}. We have showed that the limit process $\overline{M}$ satisfies its hypotheses. Hence, there exists 
an enlarged probability space $(\widetilde{\Omega},\widetilde{\F},\widetilde{\mP})$, with $\overline{\Omega}\subset\widetilde{\Omega}$ and 
a $\mQ$-Wiener process $\widetilde{W}$ defined on $(\widetilde{\Omega},\widetilde{\F},\widetilde{\mP})$ such that $\overline{M}$, $\overline{u}$ can be extended to random variables on this space and, for every $t\geq 0$,
\begin{equation}\label{eq: martingale part}
\overline{M}(t,\cdot)
=
\int_0^t  f(\overline{u})(s,\cdot)d\widetilde{W}(s).
\end{equation}
We are ready to prove the main theorem.

\begin{proof}[ of Theorem~\ref{theo:main}]

For any $t\in [0,T]$, there exists $k\in \{0,\cdots,N_m-1\}$ such that $t\in (t^{(k)},t^{(k+1)}]$. For any $\psi\in W^{1,p}_0(\Theta)\cap L^{\widehat{p}}(\Theta)$,
we take the sum of~\eqref{eq: tilde u} from $n=0$ to $n=k$ with test function 
$\phi:=P_{\mD_m}\psi$ (recall the definition \eqref{def P} of $P_{\mD_m}$) to obtain, $\overline{\mP}$ a.s.,
\begin{align}\label{eq: tilde u2}
\big\langle \Pi_{\mD_m} \widetilde{u}_m(t),\Pi_{\mD_m} P_{\mD_m}\psi\big\rangle_{L^2}
&-
\big\langle \Pi_{\mD_m} u^{(0)},\Pi_{\mD_m} P_{\mD_m}\psi\big\rangle_{L^2}\nonumber\\
&+
\sum_{n=0}^k \dtD \big\langle a(\Pi_{\mD_m} \widetilde{u}_m^{(n+1)},\nabla_{\mD_m} \widetilde{u}_m^{(n+1)}),  \nabla_{\mD_m} P_{\mD_m}\psi\big\rangle_{L^2}\nonumber\\
&=
 \big\langle \overline{M}_m(t),  \Pi_{\mD_m} P_{\mD_m}\psi\big\rangle_{L^2}.
\end{align}
By consistency of $(\mD_m)_{m\in\N}$ (Definition \ref{def:consistency}) we have $\Pi_{\mD_m} P_{\mD_m}\psi\rightarrow\psi$ in $L^{\widehat{p}}$. Hence, Equations~\eqref{eq: strongcon u} and~\eqref{eq: strongcon M} show that, for almost every $t$,
\begin{align}\label{eq: conver2}
\big\langle \Pi_{\mD_m} \widetilde{u}_m(t),\Pi_{\mD_m} P_{\mD_m}\psi\big\rangle_{L^2} 
&\rightarrow \,
\big\langle  \overline{u}(t),\psi\big\rangle_{L^2}
\quad\text{in } L^p(\overline{\Omega})\nonumber\\
\big\langle \overline{M}_m(t),  \Pi_{\mD_m} P_{\mD_m}\psi\big\rangle_{L^2}
&\rightarrow\,
\big\langle \overline{M}(t),  \psi\big\rangle_{L^2} 
\quad\text{in } L^r(\overline{\Omega}).
\end{align}
Moreover, we also have
\begin{equation}\label{eq: conver3}
\big\langle \Pi_{\mD_m} u^{(0)},\Pi_{\mD_m} P_{\mD_m}\psi\big\rangle_{L^2}
\rightarrow \big\langle  u_0,\psi\big\rangle_{L^2}.
\end{equation}
It remains to prove the convergence of the last term in the left hand side of~\eqref{eq: tilde u2}. We first note that
\begin{align}\label{eq: term3}
\sum_{n=0}^k \dtD &\big\langle a(\Pi_{\mD_m} \widetilde{u}_m^{(n+1)},\nabla_{\mD_m} \widetilde{u}_m^{(n+1)}),  \nabla_{\mD_m} P_{\mD_m}\psi\big\rangle_{L^2}\nonumber\\
&= 
\int_0^t \big\langle a(\Pi_{\mD_m} \widetilde{u}_m(s),\nabla_{\mD_m} \widetilde{u}_m(s)),  \nabla_{\mD_m} P_{\mD_m}\psi\big\rangle_{L^{p'},L^p}\,ds\nonumber\\
&\quad+
\int_t^{\lceil t/\dtDm\rceil\dtDm} \big\langle a(\Pi_{\mD_m} \widetilde{u}_m(s),\nabla_{\mD_m} \widetilde{u}_m(s)),  \nabla_{\mD_m} P_{\mD_m}\psi\big\rangle_{L^{p'},L^p}\,ds.
\end{align}
Since $\nabla_{\mD_m} P_{\mD_m}\psi\rightarrow \nabla \psi$ in $L^{p}$, the a.s.~convergences \eqref{eq: conver0} and \eqref{eq:zm} enable us to apply the standard Minty argument (as in, e.g., \cite[Proof of Theorem 5.19 (Step 3)]{Droniou.et.al2018}) to get the a.s.~convergence of the first term in the right hand side of~\eqref{eq: term3}: for any $t\in [0,T]$, $\overline{\mP}$-a.s.,
\begin{align}\label{eq: conver4}
\int_0^t \big\langle a(\Pi_{\mD_m} \widetilde{u}_m(s),\nabla_{\mD_m} \widetilde{u}_m(s)),  &\nabla_{\mD_m} P_{\mD_m}\psi\big\rangle_{L^{p'},L^p}\,ds\nonumber\\
&\rightarrow 
\int_0^t \big\langle a(\overline{u}(s),\nabla \overline{u}(s)),  \nabla\psi\big\rangle_{L^{p'},L^p}\,ds.
\end{align}
The expectation of the last term in the right hand side of~\eqref{eq: term3} tends to zero as $m\rightarrow\infty$. Indeed, by using~\eqref{eq: a2}, H\"older inequality and~\eqref{eq: bound1} we obtain
\begin{align*}
\E\bigl[\big|
\int_t^{\lceil t/\dtDm\rceil\dtDm} &\big\langle a(\Pi_{\mD_m} \widetilde{u}_m(s),\nabla_{\mD_m} \widetilde{u}_m(s)),  \nabla_{\mD_m} P_{\mD_m}\psi\big\rangle_{L^{p'},L^p}\,ds\big|
\bigr]\\
&\leq 
C \E\bigl[\int_t^{\lceil t/\dtDm\rceil\dtDm}
\int_\Theta (1+ |\nabla_{\mD_m} \widetilde{u}_m^{(k+1)}|^{p-1})| \nabla_{\mD_m} P_{\mD_m}\psi| \,dx\,ds
\bigr]\\
&\leq 
C \dtDm\|\nabla_{\mD_m} P_{\mD_m}\psi\|_{L^1}\\
&\quad+
C \E\bigl[
\int_t^{\lceil t/\dtDm\rceil\dtDm}
\|\nabla_{\mD_m}\widetilde{u}_m^{(k+1)}\|^{p-1}_{L^p}\|\nabla_{\mD_m}P_{\mD_m}\psi\|_{L^p}\,ds
\bigr]\\
&\leq 
C \dtDm\\
&\quad+
C \E\bigl[\bigl(
\int_t^{\lceil t/\dtDm\rceil\dtDm}
\|\nabla_{\mD_m}\widetilde{u}_m^{(k+1)}\|^{p}_{L^p}\,ds\bigr)^{(p-1)/p}
\bigl(
\int_t^{\lceil t/\dtDm\rceil\dtDm}
\,ds\bigr)^{1/p}
\bigr]\\
&\leq 
C \dtDm
+C (\dtDm)^{1/p} \E\bigl[
\int_t^{\lceil t/\dtDm\rceil\dtDm}
\|\nabla_{\mD_m}\widetilde{u}_m^{(k+1)}\|^{p}_{L^p}\,ds
\bigr]^{(p-1)/p}\\
&\leq 
C \dtDm
+C (\dtDm)^{1/p} \E\bigl[
\int_0^T
\|\nabla_{\mD_m}\widetilde{u}_m(s)\|^{p}_{L^p}\,ds
\bigr]^{(p-1)/p}\\
&\leq
C (\dtDm+ (\dtDm)^{1/p}),
\end{align*}
which implies
\begin{equation}\label{eq: conver5}
\int_t^{\lceil t/\dtDm\rceil\dtDm} \big\langle a(\Pi_{\mD_m}\widetilde{u}_m(s),\nabla_{\mD_m}\widetilde{u}_m(s)),  \nabla_{\mD_m}P_{\mD_m}\psi\big\rangle_{L^{p'},L^p}\,ds
\rightarrow 0
\quad\text{in }L^1(\overline{\Omega})
\end{equation}

Using~\eqref{eq: conver2}--\eqref{eq: conver5} and~\eqref{eq: martingale part}, we pass to the limit in~\eqref{eq: tilde u2} to see that $\overline{u}$ satisfies (4) in Definition \ref{def:wea sol}, with $\widetilde{W}$ instead of $W$.
\end{proof}

\section{Conclusion}

We presented a numerical analysis framework for transient $p$-Laplace-like equations driven by a stochastic multiplicative noise. This framework, based on the Gradient Discretisation Method, covers many different numerical schemes, and in particular schemes (such as finite volume methods, discontinuous Galerkin methods, or polytopal hybrid methods) that haven't been widely studied yet in the context of stochastic PDEs. We proved the convergence of the discretisation towards a weak martingale solution by means of compactness arguments, which mix the Skohorod theorems with a Discrete Functional Analysis approach (compactness results, established in the deterministic setting, for fully discrete and non-conforming schemes).

The GDM has been analysed, in the deterministic setting, for a range of non-linear models, including miscible flows in porous media \cite{DEPT17}, Stokes and Navier--Stokes equations \cite{DEF:16,EFG:18}, and degenerate parabolic equations \cite{Droniou2016}. Since our approach is based on the generic tools developed in the GDM, it has the potential to be extended to stochastic versions of such models, and possibly to others such as the $p$-Laplace Navier--Stokes model.

\section{Appendix}
\begin{lemma}\label{lem: emb}
Let $\alpha>0$, $q>0$ and $(E,d_E)$ be a metric space.
Assume that $g:[0,T]\to E$ is piecewise constant with respect to the partition $(t^{(n)})_{n=0,\ldots,N}$ and that, for all $\ell=1,\ldots,N-1$, denoting by $g^{(n)}$ the constant value of $g$ on $(t^{(n)},t^{(n+1)}]$,
\begin{equation}\label{eq: embe1}
\dtD\sum_{n=1}^{N-\ell}
d_E(g^{(n+\ell)}, g^{(n)})^q
\leq C(t^{(\ell)})^\alpha.
\end{equation}
Then, there exists a  constant $C'$ not depending on $g$ or $\dtD$ such that 
\[
\int_0^{T-\rho} d_E(g(t+\rho),g(t))^q\, dt \le C' \sigma(\rho,\dtD),
\]
for any $\rho\in [0,T]$, where 
\begin{equation*}
\sigma(\rho,\dtD) = 
\begin{cases}
\rho^{\alpha}\quad &\mbox{ if }\alpha\in(0,1]\\
\rho^{\alpha} + (\dtD)^{\alpha-1}\rho\quad &\mbox{ if }\alpha>1.
\end{cases}
\end{equation*}
\end{lemma}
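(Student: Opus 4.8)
The plan is to exploit the piecewise-constant structure of $g$ so as to reduce the left-hand integral to the discrete sums of \eqref{eq: embe1} evaluated at two consecutive shifts, and then to match the result against $\sigma(\rho,\dtD)$ by a short case analysis.

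I would write $\rho=\ell\dtD+r$ with $\ell=\lfloor\rho/\dtD\rfloor$ and $r\in[0,\dtD)$. Fixing $n$ and looking at $t\in(t^{(n)},t^{(n+1)}]$, where $g(t)=g^{(n)}$, and writing $t=t^{(n)}+\tau$ with $\tau\in(0,\dtD]$, one has $t+\rho=t^{(n+\ell)}+(\tau+r)$, so $g(t+\rho)=g^{(n+\ell)}$ on the sub-interval $\{\tau\le\dtD-r\}$, of length $\dtD-r$, and $g(t+\rho)=g^{(n+\ell+1)}$ on $\{\tau>\dtD-r\}$, of length $r$. Integrating over each cell and summing over $n$ gives
\[
\int_0^{T-\rho} d_E(g(t+\rho),g(t))^q\,dt \le \frac{\dtD-r}{\dtD}\Big(\dtD\sum_n d_E(g^{(n+\ell)},g^{(n)})^q\Big)+\frac{r}{\dtD}\Big(\dtD\sum_n d_E(g^{(n+\ell+1)},g^{(n)})^q\Big).
\]
Then I would invoke \eqref{eq: embe1} with the shifts $\ell$ and $\ell+1$ (for $\ell=0$ the first bracket is a sum of zeros, and when a shift reaches $N$ the sum is empty), together with $t^{(\ell)}=\ell\dtD\le\rho$, $\frac{\dtD-r}{\dtD}\le 1$ and $\frac{r}{\dtD}<1$: the right-hand side is bounded by $C\rho^\alpha+C\big((\ell+1)\dtD\big)^\alpha$.

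It remains to convert $C\rho^\alpha+C\big((\ell+1)\dtD\big)^\alpha$ into $C'\sigma(\rho,\dtD)$, which is where the two regimes of $\sigma$ appear. If $\rho\ge\dtD$, then $(\ell+1)\dtD=\ell\dtD+\dtD\le 2\rho$, so the bound is $\le C(1+2^\alpha)\rho^\alpha\le C'\sigma(\rho,\dtD)$, whatever the value of $\alpha$. If $\rho<\dtD$, then $\ell=0$, $r=\rho$, the first bracket vanishes, and the bound reduces to $\frac{\rho}{\dtD}\,C\dtD^\alpha=C\rho\,\dtD^{\alpha-1}$; for $\alpha\le 1$ this is $\le C\rho^\alpha$ because $\dtD>\rho$ and $\alpha-1\le 0$ yield $\dtD^{\alpha-1}\le\rho^{\alpha-1}$, whereas for $\alpha>1$ the term $C\rho\,\dtD^{\alpha-1}$ is, up to a constant, exactly the second summand of $\sigma(\rho,\dtD)$. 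Taking $C'$ to be the largest constant appearing above completes the argument.

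The proof is elementary, so there is no genuine obstacle; the only point requiring care is the exact bookkeeping on each cell — identifying the two values of $t\mapsto g(t+\rho)$ and the lengths $\dtD-r$ and $r$ of the corresponding sub-cells — and the recognition that the correction term $(\dtD)^{\alpha-1}\rho$ in $\sigma$ is forced precisely by the regime $\rho<\dtD$ with $\alpha>1$, the concavity of $x\mapsto x^\alpha$ reabsorbing it into $\rho^\alpha$ when $\alpha\le 1$.
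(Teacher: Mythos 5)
Your proof is correct and follows essentially the same route as the paper's: you split each cell of the time partition into the two sub-intervals on which $t\mapsto g(t+\rho)$ equals $g^{(n+\ell)}$ or $g^{(n+\ell+1)}$, apply \eqref{eq: embe1} at the shifts $\ell$ and $\ell+1$, and then perform the same case analysis on $\rho$ versus $\dtD$ (with $(\ell+1)\dtD\le 2\rho$ when $\rho\ge\dtD$, and the $C\rho\,\dtD^{\alpha-1}$ term absorbed into $\rho^\alpha$ when $\alpha\le 1$). The only differences are notational — you write $\rho=\ell\dtD+r$ where the paper writes $\rho=\dtD(\ell+\epsilon)$ — and both treatments share the same harmless boundary-index bookkeeping.
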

\begin{proof}
\underline{(i) $\rho\in (0,\dtD]$.}

For any $t\in [0,T-\rho]$, there exists $n\in\{1,\cdots,N\}$ such that 
$t\in  (t^{(n-1)},t^{(n)}]$. If $t\in  (t^{(n-1)},t^{(n)}-\rho]$, then $t+\rho \in (t^{(n-1)},t^{(n)}]$ and $g(t+\rho)= g(t)=g^{(n)}$, so that $d_E(g(t+\rho),g(t))=0$. If $t\in  (t^{(n)}-\rho,t^{(n)}]$, then $t+\rho \in (t^{(n)},t^{(n+1)}]$ and $g(t)=g^{(n)}$, $g(t+\rho) = g^{(n+1)}$, so that $d_E(g(t+\rho),g(t))=d_E(g^{(n+1)},g^{(n)})$. Therefore, from~\eqref{eq: embe1} with $\ell=1$ we have
\begin{align*}
\int_0^{T-\rho} d_E(g(t+\rho),g(t))^q \, dt
&=
\rho \sum_{n=1}^{N-1}d_E(g^{(n+1)},g^{(n)})^q
\leq 
\rho C(t^{(1)})^\alpha \dtD^{-1}=  C \rho \dtD^{\alpha-1} \\
&\leq 
\begin{cases}
C\rho^\alpha \quad &\mbox{ if }\alpha\in(0,1]\\
C \dtD^{\alpha-1}\rho\quad &\mbox{ if }\alpha>1.
\end{cases}
\end{align*}
Above, in the case $\alpha\le 1$, we have concluded by writing $\rho \dtD^{\alpha-1}=(\rho/\dtD)^{1-\alpha}\rho^\alpha\le \rho^\alpha$, since $\rho\le \dtD$.

\underline{(ii) $\rho >\dtD$.}

In this case, we can find $1\leq \ell\leq N-1$ and $\epsilon\in (0,1)$ such that $\rho = \dtD(\ell+\epsilon)$. For any $t\in [0,T-\rho]$, there exists $n\in\{1,\cdots,N-\ell\}$ such that 
$t\in  (t^{(n-1)},t^{(n)}]$. If $t\in  (t^{(n-1)},t^{(n)}-\dtD\epsilon]$, then $t+\dtD\epsilon \in (t^{(n-1)},t^{(n)}]$ and $t +\rho \in (t^{(n-1+\ell)},t^{(n+\ell)}]$.
If $t\in  (t^{(n)}-\dtD\epsilon,t^{(n)}]$, then $t+\dtD\epsilon \in (t^{(n)},t^{(n+1)}]$ and $t +\rho \in (t^{(n+\ell)},t^{(n+\ell+1)}]$. 
Therefore, from~\eqref{eq: embe1} we have
\begin{align*}
\int_0^{T-\rho} d_E(g(t+\rho),g(t))^q \, dt
&=
\sum_{n=1}^{N-\ell-1}\bigl[
\int_{t^{(n-1)}}^{t^{(n)}-\dtD\epsilon} d_E(g^{(n+\ell)},g^{(n)})^q\, dt\\
&\quad +
\int_{t^{(n)}-\dtD\epsilon}^{t^{(n)}} d_E(g^{(n+\ell+1)},g^{(n)})^q\, dt
\bigr]\\
&\quad +
\int_{t^{(N-\ell-1)}}^{t^{(N-\ell)}-\dtD\epsilon} d_E(g^{(N)},g^{(N-\ell)})^q\, dt\\
&\leq 
\dtD(1-\epsilon) C \dtD^{-1}(t^{(\ell)})^\alpha +\dtD\epsilon C\dtD^{-1}(t^{(\ell+1)})^\alpha\\
&\leq 
C (1 + 2^\alpha)(t^{(\ell)})^\alpha\\
& \leq C (1 + 2^\alpha) \rho^\alpha,
\end{align*}
which concludes the proof of this lemma.
\end{proof}
The following lemma is a consequence of Lemma~\ref{lem: emb}.
\begin{lemma}\label{lem: emb2}
Let $0<\alpha\leq 1$, $q>0$ and $0<\beta< \alpha/q$.
Let $g:[0,T]\to E$ be piecewise constant with respect to the partition $(t^{(n)})_{n=0,\ldots,N}$, and let $g^{(n)}$ be its constant value on $(t^{(n)},t^{(n+1)}]$. Assume that, for all $\ell=1,\ldots,N-1$,
\[
\E\Big[\dtD\sum_{n=1}^{N-\ell}
\|g^{(n+\ell)}- g^{(n)}\|_{L^2}^q\Big]
\leq C(t^{(\ell)})^\alpha.
\]
Then, there exists a  constant $C'$ not depending on $g$ neither on $\dtD$  such that 
\[
\E\bigl[\|g\|_{W^{\beta,q}([0,T];L^2)}^q\bigr]\leq C'.
\]
\end{lemma}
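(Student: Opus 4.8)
The statement asserts that an $\E$-averaged version of the translation estimate \eqref{eq: embe1} (with $\alpha\le 1$, so $\sigma(\rho,\dtD)=\rho^\alpha$) yields a uniform bound on $\E[\|g\|_{W^{\beta,q}([0,T];L^2)}^q]$ whenever $0<\beta<\alpha/q$. The natural route is to recall the Gagliardo--Slobodeckij seminorm
\[
|g|_{W^{\beta,q}([0,T];L^2)}^q = \int_0^T\!\!\int_0^T \frac{\|g(t)-g(s)\|_{L^2}^q}{|t-s|^{1+\beta q}}\,dt\,ds,
\]
rewrite the double integral over $\{t>s\}$ (times $2$) using the substitution $\rho=t-s$, and reduce matters to controlling $\int_0^T \rho^{-1-\beta q}\bigl(\int_0^{T-\rho}\|g(t+\rho)-g(t)\|_{L^2}^q\,dt\bigr)\,d\rho$. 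So the first step is to apply Lemma~\ref{lem: emb} \emph{inside the expectation}: the hypothesis of the present lemma is exactly \eqref{eq: embe1} with $E=L^2$ after taking $\E$, and since Lemma~\ref{lem: emb} is a deterministic (pathwise) statement with a constant $C'$ independent of $g$, linearity of the expectation gives
\[
\E\Big[\int_0^{T-\rho}\|g(t+\rho)-g(t)\|_{L^2}^q\,dt\Big]\le C'\rho^\alpha,\qquad \forall\rho\in(0,T].
\]

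\textbf{Assembling the bound.} Next I would use Tonelli's theorem to exchange $\E$ with the $(t,\rho)$-integrals defining the seminorm, obtaining
\[
\E\bigl[|g|_{W^{\beta,q}([0,T];L^2)}^q\bigr]
= 2\int_0^T \rho^{-1-\beta q}\,\E\Big[\int_0^{T-\rho}\|g(t+\rho)-g(t)\|_{L^2}^q\,dt\Big]\,d\rho
\le 2C'\int_0^T \rho^{\alpha-1-\beta q}\,d\rho.
\]
The exponent $\alpha-1-\beta q$ is $>-1$ precisely because $\beta<\alpha/q$, so the last integral converges and equals $\tfrac{T^{\alpha-\beta q}}{\alpha-\beta q}$; this gives a finite bound on the seminorm part. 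For the full $W^{\beta,q}$ norm one also needs the $L^q([0,T];L^2)$ part of the norm, i.e. $\E[\int_0^T\|g(t)\|_{L^2}^q\,dt]$; this is controlled by taking $\rho$ small (or $\ell=1$) in the hypothesis together with, e.g., writing $g$ on each interval via a telescoping sum from $g^{(1)}$ and using \eqref{eq: embe1}, or simply noting that in the applications of this lemma (Lemmas~\ref{lem: priori2}, \ref{lem: martingale}) the needed $L^q([0,T];L^2)$ bound is available separately — but to keep the lemma self-contained I would add the standing assumption, or derive it, that $\E[\|g^{(1)}\|_{L^2}^q]$ is finite and bound $\|g\|_{L^q([0,T];L^2)}^q\le C(\|g^{(1)}\|_{L^2}^q + \dtD\sum_n\|g^{(n+1)}-g^{(n)}\|_{L^2}^q\cdot N)$, the latter sum being handled by \eqref{eq: embe1} with $\ell=1$ after noticing $N\dtD=T$ and $t^{(1)}=\dtD$. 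Combining the two contributions gives $\E[\|g\|_{W^{\beta,q}([0,T];L^2)}^q]\le C'$ with $C'$ independent of $g$ and $\dtD$.

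\textbf{Main obstacle.} There is no serious analytical difficulty here — the proof is essentially a bookkeeping reduction to Lemma~\ref{lem: emb} via Tonelli and the Gagliardo integral. The one point requiring a little care is the treatment of the $L^q([0,T];L^2)$ (non-seminorm) part of the $W^{\beta,q}$ norm, which is not directly furnished by a translation estimate; one must either incorporate a zeroth-order bound into the hypotheses or extract it from \eqref{eq: embe1} as sketched above. A secondary subtlety is making sure the constant genuinely does not depend on $\dtD$: this is automatic because Lemma~\ref{lem: emb} already delivers a $\dtD$-independent constant in the regime $\alpha\le 1$ (where $\sigma(\rho,\dtD)=\rho^\alpha$), and the remaining integral $\int_0^T\rho^{\alpha-1-\beta q}d\rho$ involves only $T,\alpha,\beta,q$.
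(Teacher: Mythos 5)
Your proposal is correct and follows essentially the same route as the paper: the paper likewise reruns the (positively linear in the translation data) estimates from the proof of Lemma~\ref{lem: emb} under the expectation to obtain $\E\bigl[\int_0^{T-\rho}\|g(t+\rho)-g(t)\|_{L^2}^q\,dt\bigr]\le C\rho^\alpha$ — note you cannot literally invoke Lemma~\ref{lem: emb} pathwise, since the hypothesis here is only an expectation bound, but your appeal to linearity of $\E$ is exactly the right repair — and then integrates $\rho^{\alpha-\beta q-1}$ over $(0,T)$, which converges precisely because $\beta<\alpha/q$. One remark on your side issue: the paper identifies $\|g\|_{W^{\beta,q}([0,T];L^2)}^q$ with the Gagliardo seminorm integral, so the $L^q(0,T;L^2)$ component is not treated there; your proposed telescoping bound for it would not be uniform in $\dtD$ (it produces a factor of order $N\,\dtD^{\alpha}=T\,\dtD^{\alpha-1}$ when $\alpha<1$), but this is harmless since in every application of the lemma the zeroth-order moment bounds are supplied separately.
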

\begin{proof}
Using the same arguments as in Lemma~\ref{lem: emb} and adding the expectation on estimates, we also obtain from the assumption on $g$ that 
\begin{equation}\label{eq:translate.rho}
\E\Big[\int_0^{T-\rho} \|g(t+\rho)-g(t)\|_{L^2}^q\, dt\Big] \le C \rho^\alpha.
\end{equation}
This implies that
\begin{align*}
\E\bigl[\|g\|_{W^{\beta,q}([0,T];L^2)}^q\bigr]
&=
\E\Big[
\int_0^T
\bigl(
\int_0^{T-\rho}   
\|g(s+\rho)-g(s)\|_{L^2}^q\,ds 
\bigr)\,\frac{d\rho}{\rho^{1+\beta q}}
\Big]\\
&\leq
C \int_0^T  \rho^{\alpha-\beta q-1}d\rho = CT^{\alpha-\beta q}.
\end{align*}
\end{proof}

\begin{lemma}\label{lem: comp}
Let $\beta\in (0,1)$. For any $r\geq 1$, the following embedding is compact:
\[
H^\beta(0,T;L^2) \cap L^\infty(0,T;L^2)
\stackrel{c}{\hookrightarrow} L^r(0,T;L^2_\mathrm{w})
\]
where the space $L^r(0,T;L^2_\mathrm{w})$ and its topology are defined in Section \ref{sec:tight}.
\end{lemma}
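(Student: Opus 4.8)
The plan is to establish the compact embedding by a standard two-step argument: first show that a bounded set in $H^\beta(0,T;L^2)\cap L^\infty(0,T;L^2)$ is relatively compact, and then identify its limit points as elements of $L^r(0,T;L^2_\mathrm{w})$. Let $(v_k)_{k\in\N}$ be a bounded sequence in $H^\beta(0,T;L^2)\cap L^\infty(0,T;L^2)$, say $\|v_k\|_{H^\beta(0,T;L^2)}+\|v_k\|_{L^\infty(0,T;L^2)}\le M$ for all $k$. Since the closed ball $B$ of radius $M$ in $L^2$ is weakly compact and metrisable by $d_{L^2_\mathrm{w}}$ (as recalled in Section~\ref{sec:tight}), and since $(v_k)$ takes values in $B$, I would aim to extract a subsequence converging in $L^r(0,T;(B,d_{L^2_\mathrm{w}}))$, which is exactly convergence in $L^r(0,T;L^2_\mathrm{w})$ on the relevant bounded set.

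\textbf{Key steps.} First I would fix an orthonormal (or merely total) countable family $(\phi_i)_{i\in\N}$ in $L^2$ (e.g.~the dense countable set used in Section~\ref{sec:tight}) and consider, for each fixed $i$, the scalar functions $t\mapsto \langle v_k(t),\phi_i\rangle_{L^2}$. The $H^\beta$-bound on $v_k$ transfers to a uniform $H^\beta(0,T;\R)$-bound on these scalar functions (the fractional Sobolev--Slobodeckij seminorm only decreases under pairing with a fixed $L^2$ function, up to the factor $\|\phi_i\|_{L^2}$), and $H^\beta(0,T;\R)\hookrightarrow L^2(0,T;\R)$ compactly for any $\beta>0$ by the classical Rellich--Kondrachov theorem for fractional Sobolev spaces. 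Hence, by a diagonal extraction over $i$, I can find a subsequence (not relabelled) and a limit such that $\langle v_k(\cdot),\phi_i\rangle_{L^2}$ converges in $L^2(0,T;\R)$, hence a.e.\ in $t$ along a further subsequence, for every $i$. Second, since $v_k(t)\in B$ for a.e.\ $t$ and $B$ is weakly compact, for a.e.\ $t$ the convergence of all the scalar pairings $\langle v_k(t),\phi_i\rangle_{L^2}$ (along the extracted subsequence) together with totality of $(\phi_i)$ in $L^2$ forces $v_k(t)\rightharpoonup v(t)$ in $L^2$ for some $v(t)\in B$; this gives $d_{L^2_\mathrm{w}}(v_k(t),v(t))\to 0$ a.e.\ $t$. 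Third, I would pass from a.e.\ convergence to $L^r(0,T)$-convergence of $t\mapsto d_{L^2_\mathrm{w}}(v_k(t),v(t))$: this quantity is bounded (by $1$, since $d_{L^2_\mathrm{w}}$ is bounded by the geometric series $\sum 2^{-i}$, or at worst by a constant depending on $M$ using the truncations in the metric) and converges to $0$ a.e., so the dominated convergence theorem yields $\int_0^T d_{L^2_\mathrm{w}}(v_k(t),v(t))^r\,dt\to 0$, i.e.\ $v_k\to v$ in $L^r(0,T;L^2_\mathrm{w})$. Finally, $v$ is measurable as an a.e.\ pointwise weak limit and lies in $L^\infty(0,T;L^2)\subset L^r(0,T;L^2)$, so $v\in L^r(0,T;L^2_\mathrm{w})$ and the embedding is compact.

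\textbf{Main obstacle.} The routine fractional-Rellich and diagonal-extraction steps are standard; the only point requiring a little care is the passage from ``$\langle v_k(t),\phi_i\rangle_{L^2}$ converges for every $i$'' to ``$v_k(t)$ converges weakly in $L^2$,'' which is where the uniform $L^\infty(0,T;L^2)$-bound is essential: without it, the pointwise pairings could converge without the $v_k(t)$ staying in a weakly compact set, so no weak limit in $L^2$ need exist. Keeping track of the fact that the extracted subsequence can be made to work simultaneously for all $i$ and for a.e.\ $t$ (one diagonalisation over $i$, then one further extraction for a.e.\ convergence of each scalar sequence in $L^2(0,T)$) is the mildly delicate bookkeeping, but presents no genuine difficulty.
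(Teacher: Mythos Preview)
Your argument is correct, but it takes a genuinely different route from the paper's proof. The paper first extracts a weak limit $w$ of $(w_m)$ in $H^\beta(0,T;L^2)\cap L^2(0,T;L^2)$, sets $v_m=w_m-w$, and then introduces a piecewise-constant-in-time average $v_m^\eta$ (step $\eta=T/L$): the $H^\beta$ seminorm directly yields $\|v_m^\eta-v_m\|_{L^2(0,T;L^2)}\le C\eta^\beta$, which by interpolation with the $L^\infty(0,T;L^2)$ bound gives an $L^r(0,T;L^2)$ estimate, while for each fixed $\eta$ the finitely many time-averages $v_m^\eta|_{[\ell\eta,(\ell+1)\eta)}$ go to $0$ weakly in $L^2$ (each is a bounded linear functional of $v_m\in L^2(0,T;L^2)$), so $d_{L^r(L^2_\mathrm{w})}(v_m^\eta,0)\to 0$; combining and letting $\eta\to 0$ concludes. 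Your approach instead reduces to the classical compact embedding $H^\beta(0,T;\R)\hookrightarrow L^2(0,T;\R)$ applied to the scalar pairings $\langle v_k,\phi_i\rangle$, followed by two diagonal extractions (over $i$, then for a.e.\ convergence) and dominated convergence using the bounded metric $d_{L^2_\mathrm{w}}$. Your route is more conceptual and leans on a known 1D Rellich result; the paper's is fully self-contained and explicitly quantitative (the $\eta^\beta$ rate), avoiding any black-box compactness. Both rely on the $L^\infty(0,T;L^2)$ bound in essentially the same way: the paper needs it for the $L^2$-to-$L^r$ interpolation, you need it to trap $v_k(t)$ in a fixed weakly compact ball so that convergence of the countable family of pairings forces weak convergence.
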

\begin{proof}
For any bounded sequence $\{w_m\}_{m\in\N}$ in $H^\beta(0,T;L^2) \cap L^\infty(0,T;L^2)$,
there exists $w\in H^\beta(0,T;L^2) \cap L^\infty(0,T;L^2)$ such that 
\[
w_m\rightarrow w\quad \text{weakly in }H^\beta(0,T;L^2) \cap L^2(0,T;L^2)
\]
up to a subsequence. Let $v_m = w_m-w$.
It is sufficient to  prove that  $\{v_m\}_{m\in\N}$ converges to zero in $L^r(0,T;L^2_{\mathrm{w}})$. 

For any $L\in\N$, let $\eta:= T/L$. We define the piecewise constant function $v_m^\eta$ by
\[
v_m^\eta \big |_{[\ell\eta,(\ell+1)\eta)} := \frac{1}{\eta}
\int_{\ell\eta}^{(\ell+1)\eta} v_m(s)\,ds
\] 
We note that $\{v_m\}_{m\in\N}$ is bounded in $H^\beta(0,T;L^2)$.
By using the Minkowski's integral inequality, we deduce
\begin{align*}
\|v_m^\eta - v_m\|_{L^2(0,T;L^2)}^2
&=
\sum_{\ell=0}^{L-1}
\int_{\ell\eta}^{(\ell+1)\eta}\int_\Theta 
\left(\frac{1}{\eta}\int_{\ell\eta}^{(\ell+1)\eta} v_m(s,x)-v_m(t,x)\,ds\right)^2\,dx\,dt\nonumber\\
&\leq 
\sum_{\ell=0}^{L-1}
\int_{\ell\eta}^{(\ell+1)\eta}\int_{\ell\eta}^{(\ell+1)\eta}
\|v_m(s)-v_m(t)\|_{L^2}^2\,ds\, dt\nonumber\\
&\leq 
T\eta^{2\beta}\sum_{\ell=0}^{L-1}\int_{\ell\eta}^{(\ell+1)\eta}\int_{\ell\eta}^{(\ell+1)\eta}
\frac{\|v_m(s)-v_m(t)\|_{L^2}^2}{|t-s|^{2\beta+1}}\,ds\, dt\nonumber\\
&\leq 
T\eta^{2\beta} \|v_m\|_{H^\beta(0,T;L^2)}^2
\leq  C\eta^{2\beta}.
\end{align*}
Using the boundedness of $v_m^\eta - v_m$ in $L^\infty(0,T;L^2)$ and an interpolation inequality of $L^r(0,T)$ between $L^\infty(0,T)$ and $L^2(0,T)$, we infer
\begin{equation}\label{eq: comp2}
\|v_m^\eta - v_m\|_{L^r(0,T;L^2)}\le C \eta^{\frac{2\beta}{r}}.
\end{equation}
On the other side,
\begin{equation}\label{eq:vmeta}
d_{L^r(L^2_{\mathrm{w}})}(v_m^\eta,0)^r 
= 
\int_0^T d_{L^2_{\mathrm{w}}}(v_m^\eta(s),0)^r\,ds
=
\sum_{\ell=0}^{L-1}
\eta\,  d_{L^2_{\mathrm{w}}}(v_m^\eta\big |_{[\ell\eta,(\ell+1)\eta)},0)^r,
\end{equation}
and, for any $0\leq \ell\leq L-1$ and $\phi\in L^2$, by weak convergence of $v_m$ in $L^2(0,T;L^2)$,
\begin{equation*}
\int_\Theta v_m^\eta \big |_{[\ell\eta,(\ell+1)\eta)}(x) \,\phi(x)\,dx= \frac{1}{\eta}
\int _0^T\int_\Theta v_m(t,x) 
\mId\big |_{[\ell\eta,(\ell+1)\eta)} (t)\phi(x)\,dt\,dx\, \rightarrow 0
\end{equation*}
as $m$ tends to infinity.
Plugged into \eqref{eq:vmeta}, this implies, for all $\eta$,
\begin{equation}\label{eq: comp3}
d_{L^r(L^2_{\mathrm{w}})}(v_m^\eta,0) \rightarrow 0 \quad\text{as } m\rightarrow\infty.
\end{equation}
Using~\eqref{eq: comp2}, we obtain
\[
d_{L^r(L^2_{\mathrm{w}})}(v_m,0) 
\leq 
d_{L^r(L^2_{\mathrm{w}})}(v_m,v_m^\eta) + d_{L^r(L^2_{\mathrm{w}})}(v_m^\eta,0)\\
\leq 
C\eta^{\frac{2\beta}{r}}+ d_{L^r(L^2_{\mathrm{w}})}(v_m^\eta,0).
\]
We first take the superior limit as $m$ tends to infinity of the above inequality, use~\eqref{eq: comp3} and then let $\eta$ tend to zero to obtain $d_{L^r(L^2_{\mathrm{w}})}(v_m,0) \rightarrow 0$ as $m\rightarrow\infty$,
which completes the proof. 
\end{proof}

\begin{lemma}\label{lem: tigh0}
Let $A$ be a complete metric space and $\{K_m\}_{m\in\N}$ be a sequence of compact sets in $A$. Then
 $\bigcup_{m\in\N} K_m$ is relatively compact in $A$ 
 if and only if,
for any sequence $\{x_m\}_{m\in\N}$ such that $x_m\in K_m$ for all $m$, the set 
$\{x_m \,:\, m\in \N\}$ is relatively compact in $A$.
\end{lemma}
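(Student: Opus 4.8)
The plan is to argue entirely through the sequential characterisation of relative compactness, which is valid in any metric space: a subset $S\subseteq A$ is relatively compact if and only if every sequence of elements of $S$ has a subsequence converging in $A$. Using this, both implications become elementary. We may clearly assume all the $K_m$ are non-empty (in our application each of them contains $0$).

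For the ``only if'' implication, suppose $\bigcup_{m\in\N}K_m$ is relatively compact and let $(x_m)_{m\in\N}$ satisfy $x_m\in K_m$ for all $m$. Then $\overline{\{x_m:m\in\N\}}$ is a closed subset of the compact set $\overline{\bigcup_{m\in\N}K_m}$, hence is itself compact; so $\{x_m:m\in\N\}$ is relatively compact. This direction is immediate.

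For the ``if'' implication, assume that $\{x_m:m\in\N\}$ is relatively compact for every choice $x_m\in K_m$, and let $(y_j)_{j\in\N}$ be an arbitrary sequence in $\bigcup_{m\in\N}K_m$; for each $j$ pick $m(j)$ with $y_j\in K_{m(j)}$. If the set $\{m(j):j\in\N\}$ is finite, the pigeonhole principle yields an index $m_0$ attained for infinitely many $j$, so a subsequence of $(y_j)_j$ lies in the compact set $K_{m_0}$ and therefore has a further convergent subsequence. If $\{m(j):j\in\N\}$ is infinite, first pass to a subsequence $(y_{j_k})_k$ along which $k\mapsto m(j_k)$ is strictly increasing; then define an auxiliary sequence $(x_m)_{m\in\N}$ by $x_{m(j_k)}:=y_{j_k}$ for each $k$ and $x_m\in K_m$ chosen arbitrarily for the remaining indices. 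By hypothesis $\{x_m:m\in\N\}$, and hence its subset $\{y_{j_k}:k\in\N\}$, is relatively compact, so $(y_{j_k})_k$ — and therefore $(y_j)_j$ — has a convergent subsequence. In all cases $(y_j)_j$ admits a subsequence converging in $A$, which proves that $\bigcup_{m\in\N}K_m$ is relatively compact.

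The only point requiring a little care is the second case of the ``if'' part: one must thin the sequence down to indices $j_k$ with pairwise distinct labels $m(j_k)$, so that the assignment $x_{m(j_k)}:=y_{j_k}$ is unambiguous, and the (here automatic) non-emptiness of each $K_m$ is what allows the definition of the auxiliary sequence to be completed at the remaining indices. No other subtlety arises; in fact the completeness of $A$ plays no essential role in this argument, but it is harmless to assume it.
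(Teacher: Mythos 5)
Your proof is correct, and it takes a genuinely different route from the paper's. The paper only treats the non-trivial ``if'' direction and argues by contradiction via total boundedness: assuming $Z=\bigcup_m K_m$ admits no finite cover by $\varepsilon$-balls, it extracts a selection sequence $(x_{m_k})$ that is $\varepsilon$-separated, contradicting the hypothesis; relative compactness of $Z$ then follows from total boundedness together with the completeness of $A$. You instead work directly with the sequential characterisation of relative compactness, splitting an arbitrary sequence in $Z$ according to whether its labels $m(j)$ take finitely or infinitely many values, and in the second case thinning to strictly increasing labels so that it embeds into a single selection sequence. The two arguments buy slightly different things: the paper's is shorter to state once one accepts the ``totally bounded $+$ complete $\Rightarrow$ relatively compact'' fact, whereas yours is direct (no contradiction), and correctly identifies that completeness of $A$ is not actually needed --- the sequential argument shows $\overline{Z}$ is sequentially compact, hence compact, in any metric space. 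Your explicit remark that the $K_m$ must be assumed non-empty is also a genuine (if degenerate) point: with an empty $K_m$ the selection hypothesis is vacuous and the statement as literally written would fail, an edge case the paper silently avoids by taking $x_{m_1}\in K_{m_1}$; in the application each $K_m(C)$ contains the zero element, so this is harmless.
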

\begin{proof}
Let $Z:=\bigcup_{m\in\N}K_m$. If $Z$ is relatively compact in $A$, then $\{x_m\,:\,m\in\N\}$ is also relatively compact in $A$ since it is included in $Z$. We now prove the converse statement, by way of contradiction.

Let $\varepsilon>0$ and assume that $Z$ is not covered by a finite number of balls of radius $\varepsilon$. Since each $K_m$ is compact it has a finite covering $K_m\subset \bigcup_{i\in I_m}B_i$ by balls of radius $\varepsilon$.
Let $m_1=1$ and take $x_{m_1}\in K_{m_1}$. By assumption, $Z$ is not covered by $\bigcup_{i\in I_1}B_i\cup B(x_1,\varepsilon)$ so there is $m_2\in\N$ and $x_{m_2}\in K_{m_2}$ such that $x_{m_2}\not\in \bigcup_{i\in I_1}B_i\cup B(x_{m_1},\varepsilon)$; in particular, $x_{m_2}\not\in K_{m_1}$ so $m_2>m_1=1$ and $d(x_{m_1},x_{m_2})\ge \varepsilon$. Still using the assumption $Z\not\subset \bigcup_{\ell=1}^{m_2} \bigcup_{i\in I_\ell}B_i \cup B(x_{m_1},\varepsilon)\cup B(x_{m_2},\varepsilon)$ so we can find 
$m_3\in\N$ and $x_{m_3}\in K_{m_3}$ such that $x_{m_3}\not\in \bigcup_{\ell=1}^{m_2} \bigcup_{i\in I_\ell}B_i \cup B(x_{m_1},\varepsilon)\cup B(x_{m_2},\varepsilon)$; since each $K_\ell$, for $\ell=1,\ldots,m_2$, is contained in $\bigcup_{i\in I_\ell} B_i$, we infer that $x_{m_3}\not\in \bigcup_{\ell=1}^{m_2}K_\ell$, and thus that $m_3>m_2$; additionally, $d(x_{m_1},x_{m_3})\ge \varepsilon$ and $d(x_{m_2},x_{m_3})\ge \varepsilon$.

Continuing the construction, we design a strictly increasing sequence $(m_k)_{k\in\N}$ of natural numbers and a sequence $(x_{m_k})_{k\in\N}$ such that $x_{m_k}\in K_{m_k}$ for all $k\in\N$, and 
\begin{equation}\label{prop.seq}
d(x_{m_k},x_{m_j})\ge \varepsilon\qquad\forall k\not=j.
\end{equation}
The sequence $(x_{m_k})_{k\in\N}$ is incomplete, but can easily be completed into a sequence $(x_m)_{m\in\N}$ with $x_m\in K_m$ for all $m\in\N$. The assumption then tell us that $\{x_{m_k}\,:\,k\in\N\}\subset \{x_m\,:\,m\in\N\}$ is relatively compact. We should then be able to extract from $(x_{m_k})_{k\in\N}$ a converging subsequence, which contradicts the property \eqref{prop.seq} and completes the proof. 
\end{proof}

\thanks{\textbf{Acknowledgement}: this research was supported by the Australian Government through the Australian Research Council's Discovery Projects funding schemes (pro\-ject number DP170100605 and DP160101755).
}
\bibliographystyle{abbrv}
\bibliography{PLaplace}
\end{document}